\documentclass[11pt]{amsart} 

\usepackage{verbatim, latexsym, amssymb, amsmath,color}
\usepackage{epsfig}
\usepackage{CJK}
\usepackage{amsmath}
\usepackage{amssymb}
\usepackage{amsthm}
\usepackage[utf8]{inputenc}
\usepackage[english]{babel}

\newtheorem{theorem}{Theorem}[section]
\newtheorem{corollary}[theorem]{Corollary}
\newtheorem{proposition}[theorem]{Proposition}

\newtheorem{lemma}[theorem]{Lemma}
\newtheorem{claim}[theorem]{Claim}
\newtheorem{definition}[theorem]{Definition}
\newtheorem{remark}[theorem]{Remark}
\newcommand{\at}[2][]{#1|_{#2}}
\usepackage{geometry}
\geometry{left=3.5cm,right=3.5cm,top=4.5cm,bottom=4cm}

\title{Morse Index Bound for Minimal Two Spheres}
\author{Yuchin Sun}

\begin{document}
	\maketitle
	
	\begin{abstract}
Given a closed manifold of dimension at least three, with non trivial homotopy group $\pi_3(M)$ and a generic metric, we prove that there is a finite collection of harmonic spheres with Morse index bound one, with sum of their energies realizes a geometric invariant width.
	\end{abstract}
	\section{Introduction}
	Finite-dimensional Morse theory was developed by Morse \cite{Morse} to study geodesics. Index of a critical point of a proper nonnegative function on a manifold reflects its topology. A natural extension of Morse theory of closed geodesics would be a Morse theory of harmonic surfaces in a Riemannian manifold. Sacks and Uhlenbeck introduced $\alpha$-energy \cite{SU}, which can be perturbed to be Morse functions. The $\alpha$-energy approaches the usual energy as the parameter $\alpha$ in the perturbation goes to one, and the corresponding critical point of $\alpha$-energy converges to a harmonic map. However, without curvature assumption \cite{LLW} or finite fundamental group \cite{CT} for the ambient manifold $(M,g)$, the harmonic sphere constructed by $\alpha$-energy fails to realize the energy as $\alpha$ goes to one \cite{YY}\cite[Remark 4.9.6]{JD}. Thus, we are motivated to prove the Morse index bound of the harmonic sphere produced by the min-max theory \cite{CD}, which rules out the energy loss, namely:
\begin{theorem}[Main Theorem]\label{main}
Let $(M,g)$ be a closed Riemannian manifold with dimension at least three, $g$ generic and a nontrivial homotopy group $\pi_3(M).$ Then there exists a collection of finitely many harmonic spheres $\{u_i\}_{i=0}^n,\: u_i:S^2\to M$, which satisfies the following properties:
\begin{enumerate}
\item $\sum_{i=0}^n E(u_i)=W,$
\item $\sum_{i=0}^n Index(u_i)\leq 1,$
\end{enumerate}
here $W$ is a geometric invariant called \textit{width}.(See Definition \ref{width}.) 
\end{theorem}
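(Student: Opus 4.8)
\emph{Proof strategy.} The plan is to run a Sacks--Uhlenbeck--type min--max over the one--parameter families of maps $S^{2}\to M$ coming from a nontrivial element of $\pi_{3}(M)$, to extract for each $\alpha>1$ an $\alpha$--harmonic sphere of Morse index at most one whose $\alpha$--energy equals the corresponding $\alpha$--width, and then to pass to the limit $\alpha\to1$; the decisive point will be that the uniform index bound prevents energy from being lost in the necks of the limiting bubble tree. First I would fix the width: represent a nontrivial class in $\pi_{3}(M)$ by a sweepout, i.e.\ a loop $\{\sigma_{t}\}_{t\in S^{1}}$ in the space of smooth maps $S^{2}\to M$ based at a constant, let $\Lambda$ be the set of all such sweepouts in this homotopy class, and set
\[
W=\inf_{\{\sigma_{t}\}\in\Lambda}\ \sup_{t\in S^{1}}E(\sigma_{t}),
\]
the invariant of Definition \ref{width}. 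Finiteness is clear, and $W>0$ follows from $\varepsilon$--regularity: there is $\varepsilon_{0}>0$ so that a sweepout all of whose slices have energy below $\varepsilon_{0}$ is homotopically trivial (flow each slice by the harmonic map heat flow to a constant, continuously in $t$; a loop of constant maps represents $0\in\pi_{3}(M)$), contradicting nontriviality of the class.

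Next, for $\alpha>1$ the functional $E_{\alpha}$ on $W^{1,2\alpha}(S^{2},M)$ is $C^{2}$, satisfies the Palais--Smale condition, and can be perturbed to a Morse function $E_{\alpha,\beta}$. Running the min--max of $E_{\alpha,\beta}$ over $\Lambda$, with a pull--down of the maximal slices along a pseudo--gradient flow, yields a critical point $u_{\alpha,\beta}$ at the min--max level $W_{\alpha,\beta}$; since the family represents a nontrivial class in the fundamental group of the relevant mapping space, $\mathrm{Index}(u_{\alpha,\beta})\le1$, for otherwise one could push the sweepout below that level across the descending handles. Letting $\beta\to0$ and using lower semicontinuity of the Morse index under $C^{2}$ convergence gives an $\alpha$--harmonic map $u_{\alpha}$ with $\mathrm{Index}(u_{\alpha})\le1$ and $E_{\alpha}(u_{\alpha})=W_{\alpha}$; since $E_{\alpha}$ decreases to $E$ as $\alpha\to1$ and near--optimal sweepouts for $W$ can be inserted, $W_{\alpha}\to W$.

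Then, passing to a subsequence as $\alpha\to1$, Sacks--Uhlenbeck bubbling gives a harmonic map $u_{0}:S^{2}\to M$ (the $C^{\infty}_{\mathrm{loc}}$ limit of $u_{\alpha}$ away from finitely many points) together with harmonic bubble spheres $u_{1},\dots,u_{n}$, and
\[
W=\lim_{\alpha\to1}E_{\alpha}(u_{\alpha})=\sum_{i=0}^{n}E(u_{i})+\sum_{j}\nu_{j},\qquad\nu_{j}\ge0,
\]
where the $\nu_{j}$ are the neck energies. Lifting negative directions of the Hessian of $E$ at each $u_{i}$ to almost orthogonal negative directions of the Hessian of $E_{\alpha}$ at $u_{\alpha}$, supported near the corresponding component for $\alpha$ close to $1$, gives
\[
\sum_{i=0}^{n}\mathrm{Index}(u_{i})\ \le\ \liminf_{\alpha\to1}\mathrm{Index}(u_{\alpha})\ \le\ 1,
\]
which is assertion (2).

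It remains to show each $\nu_{j}=0$, for then $\sum_{i}E(u_{i})=W$, giving (1); this is the heart of the matter and the only place where genericity of $g$ and the index bound must really be combined. Assuming $\nu_{j}>0$ for some neck, a neck analysis shows that for $\alpha$ close to $1$ this neck is, conformally, a long thin cylinder carrying a definite amount of energy along which $u_{\alpha}$ is, in suitable coordinates, close to a nonconstant closed geodesic of $(M,g)$; bumpiness of $g$ makes this geodesic nondegenerate and thereby excludes the borderline configurations that could be index--neutral, and one then argues that a neck of this type, together with the scaling and translation parameters of the bubble it joins, must contribute negative directions to the second variation of $E_{\alpha}$ at $u_{\alpha}$, contradicting $\mathrm{Index}(u_{\alpha})\le1$. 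Making this ``neck carrying energy $\Rightarrow$ index $\geq2$'' mechanism precise and quantitative is the central difficulty; once it is established, all neck energies vanish and the proof is complete.
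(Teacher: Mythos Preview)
Your approach has a genuine gap, and it is exactly the gap the paper is written to avoid. You run the $\alpha$--energy min--max, obtain $\alpha$--harmonic spheres $u_\alpha$ with $\mathrm{Index}(u_\alpha)\le 1$ and $E_\alpha(u_\alpha)=W_\alpha\to W$, and then bubble--converge. The entire argument hinges on your last paragraph: the claim that a neck carrying positive energy forces $\mathrm{Index}(u_\alpha)\ge 2$. You yourself flag this as ``the central difficulty'' and do not prove it; in fact no such mechanism is available. Energy loss in necks for sequences of $\alpha$--harmonic maps is a known phenomenon (see the references the paper cites in its introduction), and there is no result saying that an index bound of $1$, even combined with a bumpy metric, rules it out. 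Your heuristic that the neck is modeled on a nondegenerate closed geodesic and that ``scaling and translation parameters of the bubble'' produce two negative directions is not correct as stated: the conformal group of $S^2$ acts by energy--preserving reparametrizations, so bubble position/scale do not by themselves contribute to the Morse index of $E_\alpha$, and the geodesic picture of the neck does not yield index in the way you suggest. Without this step, you cannot conclude $\sum_i E(u_i)=W$, and the theorem is not proved.

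The paper takes an entirely different route precisely to sidestep this problem. It works directly with the Colding--Minicozzi min--max (Theorem~\ref{CDDD}), which already guarantees that \emph{some} min--max sequence from \emph{any} minimizing sequence bubble--converges to harmonic spheres with total energy equal to $W$---no neck loss, by construction. The index bound is then obtained by a deformation argument (Theorem~\ref{deformation}): if a collection $\{u_i\}$ in the image set has $\sum_i\mathrm{Index}(u_i)\ge 2$, the two or more unstable directions are used to perturb the sweepout, at the level of the parameter $t\in[0,1]$, so that the new minimizing sequence still realizes $W$ but avoids $\{u_i\}$. Genericity enters only through Proposition~\ref{countable}, which says the relevant collections of harmonic spheres are countable; one then iterates the deformation to avoid all high--index collections and appeals to Theorem~\ref{CDDD} once more. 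In short: the paper never needs to control neck energy via index, because the Colding--Minicozzi theory already delivers the energy identity, and the index bound is achieved by deforming sweepouts rather than by analyzing $\alpha$--harmonic limits.
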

The collection of finitely many harmonic spheres in Theorem \ref{main} is constructed by Colding and Minicozzi \cite{CD} by the min-max theory for the energy functional, which they used to prove finite extinction time of Ricci flow. The theory can be loosely spoken as the following: given a closed manifold $M$, sweep $M$ out by a continuous one parameter family of maps from $S^2$ to $M$, starting and ending at point maps. Pull the \textit{sweepout} (Definition \ref{width}) tight, in a continuous way, by harmonic replacements. Then if the sweepout induces a nontrivial class in $\pi_3(M)$, then each map in the tightened sweepout whose area is close to the width must itself be \textit{close} to a finite collection of harmonic spheres, close in the bubble tree sense (Definition \ref{bubble conv}). In other words, the width is realized by the sum of areas of finitely many harmonic spheres. Theorem \ref{main} states that the sum of Morse indices of the harmonic spheres is at most one.

	On the other hand, harmonic spheres are minimal surfaces.  Almgren and Pitts' min-max theory \cite{AP} proves the existence of embedded minimal hypersurfaces in closed manifold of dimension at least three at most seven. Marques and Neves proved the Morse index bound of such an embedded minimal hypersurface \cite{MN}. This result plays an important role in proving Yau's conjecture \cite{AS}, which states that for any closed three manifold there exist infinitely many embedded minimal surfaces. However, Almgren and Pitts' min-max theory doesn't say anything about minimal surfaces in higher codimension. While using the min-max theory of harmonic spheres by Colding and Minicozzi \cite{CD}, there's no restriction on codimension of the ambient manifold. So it motivates us to prove the Morse index bound of the harmonic sphere produced by the min-max theory of Colding and Minicozzi \cite{CD}.
	
	Theorem \ref{main} seems like a variant of \cite{MN}. We compare the difference between them here. Besides the obvious difference of harmonic spheres and embedded minimal hypersurfaces, codimension restriction of ambient manifold, the embedded minimal hypersurface used in \cite{MN} is given by Almgren-Pitts' min-max theory, thus could have several components. When considering the variation of it, it means the variation of the whole configuration instead of each component. The index of \cite{MN} is the maximal dimension on which the second variation of the area functional of the whole configuration is negative definite. (But since the components are disconnected embedded minimal hypersurfaces, the index of the whole configuration is equivalent to the sum of Morse indices of each component.) While in theorem \ref{main}, the finite collection of harmonic spheres are not necessarily disconnected, the index of the whole configuration is less than or equal to the Morse index sum of each harmonic sphere. But the index bound we obtain in theorem \ref{main} is the sum of Morse indices of each component, which is stronger than the bound for the whole configuration. 

We also mention the following Morse Index conjecture proposed by Marques and Neves \cite{MNeves}:
\\
\textbf{Morse Index Conjection}
\textit{For generic metric on $M^{n+1}$, $3\leq (n+1)\leq 7$, there exists a smooth, embedded, two-sided and closed minimal hypersurface $\Sigma$ such that $Index(\Sigma)=k$ for any integer $k$.}
\\
Marques and Neves have shown that the conjecture is true under the assumption of multiplicity one \cite{MNeves}. For harmonic spheres, we consider the case of same assumption in theorem \ref{main}, the conjecture is true if all the finite collection of harmonic spheres in the image set (see definition \ref{image set}) is one harmonic sphere. In other words, if the min-max sequence (see definition \ref{minimizing sequence}) converges to only one harmonic sphere strongly, then that harmonic sphere has Morse index one. For the general case, the difficulty lies in bubble convergence (definition \ref{bubble convergence}). Ideally, we want to use the idea that a local minimizer can't be a min-max limit, and a stable harmonic sphere is a local minimizer for energy functional among all the spheres lie in the small tubular neighborhood. But bubble convergence doesn't imply the min-max sequence lies in the tubular neighborhood of one harmonic sphere, thus making it hard to conclude that the harmonic spheres in the image set can't all be stable.

\subsection{Idea of the Proof for Theorem \ref{main}}

We consider the image set $\Lambda(\{\gamma_j(\cdot,t)\}_j)$ of a minimizing sequence $\{\gamma_j(\cdot,t)\}_{j\in\mathbb{N}}$. The idea is if $\{u_i\}_{i=0}^m\in\Lambda(\{\gamma_j(\cdot.t)\}_j)$ have $\sum_{i=0}^m\text{Index}(u_i)>1$, then we are able to perturb $\{\gamma_j(\cdot,t)\}_{j}$ to a new sweepout $\{\tilde{\gamma}_j(\cdot,t)\}_j$ such that it is homotopic to $\gamma_j$, it is a minimizing sequence, and $\{u_i\}_{i=0}^m$ is not in its image set. Since $\{\tilde{\gamma}_j(\cdot,t)\}_j$ is minimizing, $\Lambda\{\tilde{\gamma}_j(\cdot,t)\}_j$ is nonempty. If for $\{v_i\}_{i=0}^{m_0}\in\Lambda(\{\tilde{\gamma}_j(\cdot,t)\}_j)$ we have $\sum_{i=0}^{m_0}\text{Index}(v_i)>1$, then we can perturb $\{\tilde{\gamma}_j(\cdot,t)\}_j$ again and get a new sweepout such that neither $\{u_i\}_{i=0}^m$ nor $\{v_i\}_{i=0}^{m_0}$ is in its image set. Proposition \ref{countable} states that the set of harmonic spheres with bounded energy $W$ is countable, which allows us to perturb the sweepout inductively and get a sweepout which is away from all harmonic spheres whose sum of Morse indices is greater than $1$. Since it is a minimizing sequence, it converges to a collection of finitely many harmonic spheres whose sum of Morse indices is bounded by one.

Before constructing $\{\tilde{\gamma}_j(\cdot,t)\}_j$, we define the variation of a map. Suppose $M$ is isometrically embedded in $\mathbb{R}^N$ and let $\Pi:\mathbb{R}^N\to M$ be the nearest point projection from $\mathbb{R}^N$ to $M$. Given a map $u:S^2\to M$ and $X:S^2\to \mathbb{R}^N$, with each $X^i\in C^{\infty}(S^2)$, we consider the variation of $u$ with respect to $X$ to be 
$u_s:=\Pi\circ(u+sX)$. We choose to define the variation this way so that for any map $v:S^2\to M$ close to $u$ in $W^{1,2}(S^2,M)$, the variation $v_s$ is close to $u_s$ as well. 

 Assume $\{u_i\}_{i=0}^m\in\Lambda(\{\gamma_j(\cdot,t)\}_j)$ with $\sum_{i=0}^m\text{Index}(u_i)=k\geq 2$, then there exists $\{X_l\}_{l=1}^k$, $X_l:S^2\to\mathbb{R}^N$, with the following property: for each $X_l$, there exists at least one $u_l\in\{u_i\}_{i=0}^m$ so the second variation of energy of $u_l$ with respect to $X_l$ is negative. The idea is using $\{X_l\}_{l=1}^k$ to perturb $\gamma_j(\cdot,t)$. We first prove in lemma \ref{unstable} that for $\gamma_j(\cdot,t)$ close to $\{u_i\}_{i=0}^m$, there exist corresponding cutoff functions $\eta^j_l:S^2\to\mathbb{R}$. Let $\tilde{X}_l:=\eta_l^jX_l$ and define the variation of $\gamma_j(\cdot,t)$ with respect to $\tilde{X}_l$ to be:
\[\gamma_{j,s}(\cdot,t):=\Pi\circ\big(\gamma_j(\cdot,t)+\sum_{l=1}^ks_l\tilde{X}_l\big),\]
here $s=(s_1,...,s_k)\in\bar{B}^k$, $\bar{B}^k$ is the $k$-dimensional unit ball, so that the energy of $\gamma_{j,s}(\cdot,t)$ is concave while changing $s\in\bar{B}^k$. That is, define $E_j^t(s):=E(\gamma_{j,s}(\cdot,t)), E_j^t:\bar{B}^k\to\mathbb{R},$ and we have
\[D^2E_j^t(s)<0,\quad\forall s\in\bar{B}^k.\]
If we can construct a continuous function $s_j:[0,1]\to\bar{B}^k$ so that energy decreases by a certain amount when 
$\gamma_j(\cdot,t)$ is close to $\{u_i\}_{i=0}^m$. Then the sequence $\{\gamma_{j,s_j(t)}(\cdot,t_j)\}_j$ does not converge to $\{u_i\}_{i=0}^m$, and $\gamma_{j,s_j(t)}(\cdot,t)$ is the desired sweepout. In order to construct $s_j(t)$, we observe the following one parameter gradient flow $\{\phi_j^t(\cdot,x)\}\in\text{Diff}(\bar{B}^k)$, with $x\in\bar{B}^k$ as starting point, generated by the vector field:
\[s\mapsto -(1-|s|^2)\nabla E_j^t(s),\quad s\in\bar{B}^k.\]
$\phi_j^t(\cdot,x)$ decreases the energy, except when $|x|=1$ or $x$ is the maximal point of $E_j^t$. The assumption of the lower bound of Morse index $\sum_{i=0}^m\text{Index}(u_i)=k>1$, now enables us to construct a continuous curve $y_j:[0,1]\to\bar{B}^k$ avoiding the maximal point of the function $E^t_j$ as $t$ varies. Namely, let $\nabla E_j^t(x(t))=0$, $x_j:[0,1]\to\bar{B}^k$, $x(t)$ is a continuous curve on $\bar{B}^k$. Since the dimension of $\bar{B}^k$ is larger than $1$, we can choose a continuous curve $y_j(t)$ on $\bar{B}^k$ which does not intersect with $x_j([0,1])$, then we can use $\{\phi_j^t(\cdot,y_j(t))\}$ to construct $s_j(t)$ and obtain the new sweepout $\tilde{\gamma}_j(\cdot,t):=\gamma_{j,s_j(t)}(\cdot,t)$. The new sweepout is homotopic to $\{\gamma_j(\cdot,t)\}_j$, and doesn't bubble converge to $\{u_i\}_{i=0}^m$. This is the desired perturbed sweepout.

The organization of the paper is as follows. In section 2 we give the basic definitions of harmonic sphere, bubble convergence, and state the min-max theorem \ref{CDDD}. In section 3 we prove a technical lemma \ref{unstable}. In section 4 we prove the main result theorem \ref{main}.

\subsection*{Acknowledgments}
	I would like to express my gratitude towards my advisor Andr\'e Neves. This work would not have been possible without his generous support and insightful guidance. I would like thank Xin Zhou for many valuable discussions and inspiring suggestions.
	
	\section{Background Material}	
\subsection{Harmonic Sphere}\label{WeaklyHarmonicMap}
Suppose that $S^2$ is a Riemann sphere, which can be regarded as $\mathbb{C}\cup\{\infty\}$, and $M$ is a closed manifold of dimension at least three, isometrically embedded in $\mathbb{R}^N$.

We introduce  \textit{nearest point projection} $\Pi:\mathbb{R}^N\to M$ which maps a point $x\in\mathbb{R}^N$ to the nearest point of $M$. There is a tubular neighborhood of $M$
\[M_\delta=\big\{x\in\mathbb{R}^N:\text{ dist}(x,M)<\delta\big\},\]
on which $\Pi$ is well-defined and smooth. For a map $u:S^2\to M\subseteq\mathbb{R}^N$, $u=(u^1,u^2,..,u^N)$, and $u\in W^{1,2}(S^2,M)$, we write $\nabla u$ as the sum of \textit{gradient} of $u^i$ for $i=1,...,N$. That is, $\nabla u:=\sum_{i=1}^N\nabla u^i$, and \textbf{energy} of $u$ is simply
\begin{equation}\label{energyeq}
 E(u)=\int_{S^2}|\nabla u|^2
\end{equation}
 For a given $X\in C^{\infty}(S^2,\mathbb{R}^N)$, we consider the variation of $u$ with respect to $X$ defined as the following:
\begin{equation}\label{whm}
	u_{s}=\Pi\circ(u+sX),
\end{equation}
 $u_s$ is well-defined for $s$ small enough such that the image of $u+sX$ is in the tubular neighborhood $M_\delta$.
\begin{definition}[Harmonic Sphere]\label{WHS}
We say that $u\in W^{1,2}(S^2,M)$ is a harmonic sphere if for any $X\in C^{\infty}(S^2,\mathbb{R}^N)$ we have
\begin{equation}\label{2.27}
\lim_{s\to 0}\frac{E(u_s)-E(u)}{s}=0.   
\end{equation}
\begin{remark}
Harmonic sphere is smooth  \cite{Helenf}.
\end{remark}
\end{definition}

Given a map $u:S^2\to M$, $u\in W^{1,2}(S^2,M)$, and $X\in C^{\infty}(S^2,\mathbb{R}^N)$, by Taylor polynomial expansion of $\Pi$ we have the following:
\begin{equation}\label{2.28}
\begin{split}
u_s&=\Pi\circ(u+sX)\\
&=u+sd\Pi_u(X)+\frac{s^2}{2}\text{Hess}\Pi_u(X,X)+o(s^2).     
\end{split}
\end{equation}
By applying $\nabla$ to (\ref{2.28}) we have
\begin{align*}
\nabla u_s=&\nabla u\\
	&+s(d\Pi_u(\nabla X)+\text{Hess}\Pi_u(X,\nabla u))\\
	&+\frac{s^2}{2}\big(2\text{Hess}\Pi_u(X,\nabla X)+\nabla \text{Hess}\Pi_u(X,X,\nabla u)\big)+o(s^2),
	\end{align*}
and the energy of $u_s$ is
\begin{equation}\label{2.229}
    \begin{split}
        E(u_s)=&\int_{S^2}\langle\nabla u,\nabla u\rangle \\
        +&s\int_{S^2}\langle\nabla u,d\Pi_u(\nabla X)\rangle+\langle\nabla u,
        \text{Hess}\Pi_u(X,\nabla u)\rangle \\
        +&s^2\Big\{\int_{S^2}\langle d\Pi_u(\nabla X),d\Pi_u(\nabla X)\rangle \\
	&+2\int_{S^2}\langle \text{Hess}\Pi_u(X,\nabla u),d\Pi_u(\nabla X)\rangle \\
	&+\int_{S^2}\langle \text{Hess}\Pi_u(X,\nabla u),\text{Hess}\Pi_u(X,\nabla u)\rangle \\
	&+\frac{1}{2}\int_{S^2}\langle\nabla u,\big( 2\text{Hess}\Pi_u(X,\nabla X)+\nabla\text{Hess}\Pi_u(X,X,\nabla u)\big)\rangle \Big\}\\
        +&o(s^3).
    \end{split}
\end{equation}
From (\ref{2.229}), we see that the first variation of energy is
\begin{equation}
    \begin{split}
        \delta E(u)(X):=&\frac{d}{ds}\at[\Big]{s=0}E(u_s)\\
        =&\int_{S^2}\langle\nabla u,d\Pi_u(\nabla X)\rangle+\langle\nabla u,
        \text{Hess}\Pi_u(X,\nabla u)\rangle \\
        =&\int_{S^2}\langle\nabla u,
        \nabla X\rangle-\langle X,A(\nabla u,\nabla u)\rangle.
    \end{split}
\end{equation}
the last equality follows from \cite[2.12.3]{LS}, and $u$ is a harmonic sphere if and only if 
\begin{equation}\label{whel}
        \Delta u+ A(\nabla u,\nabla u)=0.  
\end{equation}
The second variation of energy is:
\begin{equation}\label{2.29}
\begin{split}
	\delta^2 E(u)(X,X):=&\frac{d^2}{ds^2}\at[\Big]{s=0}E(u_s)\\
	=&\int_{S^2}\langle d\Pi_u(\nabla X),d\Pi_u(\nabla X)\rangle \\
	&+2\int_{S^2}\langle \text{Hess}\Pi_u(X,\nabla u),d\Pi_u(\nabla X)\rangle \\
	&+\int_{S^2}\langle \text{Hess}\Pi_u(X,\nabla u),\text{Hess}\Pi_u(X,\nabla u)\rangle \\
	&+\frac{1}{2}\int_{S^2}\langle\nabla u,\big( 2\text{Hess}\Pi_u(X,\nabla X)+\nabla\text{Hess}\Pi_u(X,X,\nabla u)\big)\rangle .
	\end{split}
	\end{equation}
It's clear that from ($\ref{2.29}$) we have 
\begin{equation}
|\delta^2E(u)(X,X)-\delta^2E(v)(X,X)|<\Psi(\|u-v\|_{W^{1,2}}),    
\end{equation}
for some continuous function $\Psi:[0,\infty)\to[0,\infty)$ with $\Psi(0)=0$.
If $u$ is a harmonic sphere, we have
\begin{equation}
    \begin{split}
   \delta^2E(u)(X,X)=&\int_{S^2}\langle\nabla d\Pi_u(X),\nabla d\Pi_u(X)\rangle\\
        &-\int_{S^2} \langle R^M(\nabla u,d\Pi_u(X))d\Pi_u(X),\nabla u\rangle.
    \end{split}
\end{equation}
\begin{definition}[Index Form]
The index form of a harmonic sphere $u:S^2\to M$ is defined by
\begin{equation}
    \begin{split}
    I(X,Y)=&\int_{S^2}\langle\nabla d\Pi_u(X),\nabla d\Pi_u(Y)\rangle\\
        &-\int_{S^2} \langle R^M(\nabla u,d\Pi_u(X))d\Pi_u(Y),\nabla u\rangle,
    \end{split}
\end{equation}
for $X,Y\in C^\infty(S^2,\mathbb{R}^N)$
\end{definition}
\begin{definition}[Index]\label{indexdef} The index of a harmonic sphere $u:S^2\to M$ is the maximal dimension of the subspace $X$ of $\Gamma(u^{-1}TM)$ on which the index form is negative definite.
\end{definition}
\begin{remark}[\cite{LS}]
For any $X\in C^\infty(S^2,\mathbb{R}^N)$, 
\[d\Pi_u(X)\in\Gamma(u^{-1}TM).\]
\end{remark}
\subsection{Bubble convergence of harmonic sphere}

This section is for defining bubble convergence (definition \ref{bubble convergence}, definition \ref{bubble norm}) and establishing several properties of it (proposition \ref{for unstable}). They are used for describing how close two maps are, which is essential for lemma \ref{unstable} and theorem \ref{deformation}. The varifold distance used by Colding and Minicozzi \cite{CD} is not sufficient because it only implies closeness in measure sense on the Grassmannian bundle of the ambient manifold. But if a map $\gamma$ is close to a finite collection of maps $\{u_i\}_{i=0}^n$ in bubble tree sense, that means for each $u_i$ there exist conformal dilation $D_i$ and compact domain $\Omega_i$ such that $\gamma$ is close to $u_i\circ D_i$ on $\Omega_i$ in $W^{1,2}$ sense. We state this \textit{closeness} of bubble convergence in definition \ref{bubble norm}, prove that it implies varifold convergence. Moreover, if a map $\gamma$ is close to $\{u_i\}_{i=0}^n$ in bubble tree sense if and only if it's close in varifold sense and the term  $\inf\Big\{\int_{S^2}|\nabla\gamma-\nabla\big(\sum_{i=0}^nu_i\circ\phi_i\big)|^2\Big|\phi_i\in PSL(2,\mathbb{C})\Big\}$ is small (see proposition \ref{for unstable}), this result is used in theorem \ref{deformation}.

\begin{definition}[M\"obius transformations]\label{ball}		
The group of automorphisms of the Riemann sphere is known as $PSL(2,\mathbb{C})$, it's also known as the group of \textit{M\"obius transformations}. Its elements are fractional linear transformations
\[\phi(z)=\frac{az+b}{cz+d},\quad ad-bc\neq 0,\]
where $a,b,c,d\in\mathbb{C}.$
	\end{definition}

	\begin{definition}[Bubble Convergence]\label{bubble convergence}
		We will say that a sequence $\gamma_{j}:S^2\to M$ of $W^{1,2}$ maps \textit{bubble converges} to a collection of $W^{1,2}$ maps $u_0,...,u_m:S^2\to M$ if the following hold:
		
		\begin{enumerate}
			\item The $\gamma_{j}$ converges weakly to $u_0$ and there's a finite set $\mathcal{S}_0=\{x_0^1,...,x_0^{k_0}\}\subset S^2$ so that the $\gamma_{j}$ converge strongly to $u_0$ in $W^{1,2}(K)$ for any compact set $K\subset S^2\setminus\mathcal{S}_0$.
			\item For each $i>0$, we get a point $x_{l_i}\in\mathcal{S}_0$ and a sequence of balls $B_{r_{i,j}}(y_{i,j})$ with $y_{i,j}\to x_{l_i}$. Further more, let $D_{i,j}$ be the dilation that takes the southern hemisphere to $B_{i,j}(y_{i,j})$. Then the map $\gamma_{j}\circ D_{i,j}$ converges to $u_i$ as in 1.
			\item if $i_1\neq i_2$, then $\frac{r_{i_1,j}}{r_{i_2,j}}+\frac{r_{i_2,j}}{r_{i_1,j}}+\frac{|y_{i_1,j}-y_{i_2,j}|^2}{r_{i_1,j}r_{i_2,j}}\to\infty.$
			\item $\sum_{i=0}^m E(u_i)=\lim\limits_{j\to\infty}E(\gamma_{j}).$
		\end{enumerate}
	\end{definition}
	We introduce $d_B(\cdot,\cdot)$ here to describe bubble convergence precisely. Notice that $d_B(\cdot,\cdot)$ is not a norm like $\|\cdot\|_{W^{1,2}}$ or $d_V(\cdot,\cdot)$ (see definition \ref{varifold distance}), we are abusing the notation here by using $d_B(\cdot,\cdot)$. 
	\begin{definition}\label{bubble norm}
		Given a collection of finitely many harmonic spheres $\{u_i\}_{i=0}^n$, and let $E=\sum_{i=0}^nE(u_i)$. For $\gamma:S^2\to M$, we say \[d_B(\gamma,\{u_i\}_{i=0}^n)<\epsilon,\]
		if we can find conformal dilations $D_{i}:S^2\to S^2,\:i=0,...,n,$ and pairwise disjoint domains $\Omega_{0},...,\Omega_{n}$, $\bigcup_{i=0}^n\Omega_{i}\subset S^2$ so the following holds:
		\begin{equation}\label{3.11}
		\sum_{i=0}^n\Big(\int_{\Omega_{i}}|\nabla(\gamma-\nabla (u_i\circ D_{i})|^2\Big)^{1/2}<\epsilon,
		\end{equation}
		\begin{equation}\label{3.12}
		\int_{S^2\setminus\bigcup_{i=0}^n\Omega_{i}}|\nabla\gamma|^2<3\epsilon^2+2(n+1)\epsilon E,
		\end{equation}
		\begin{equation}\label{3.13}
		\sum_{i=0}^n\Big(\int_{S^2\setminus\Omega_{i}}|\nabla(u_i\circ D_{i})|^2\Big)^{1/2}<\epsilon,
		\end{equation}

		We write $d_B(\gamma,\{u_i\}_{i=0}^n)\geq\epsilon$ if there's no pairwise disjoint domains $\{\Omega_i\}_{i=0}^n$ and conformal dilations $\{D_i\}_{i=0}^n$ satisfying (\ref{3.11}), (\ref{3.12}), and (\ref{3.13}).
	\end{definition}

	\begin{theorem}[Bubble convergence for harmonic maps, \cite{TP}]\label{bubble conv}
		Let	$u_i:\Sigma\to M$ be a sequence of harmonic maps from a Riemann surface to a compact Riemannian manifold with bounded energy $E_0$. i.e., 
		\[E(u_i)\leq E_0.\]
		Then $u_i$ bubble converges to a finte collection of harmonic maps $\{v_j\}_{j=0}^m$ Moreover, 
		\[\lim\limits_{i \to\infty}E(u_i)=\sum_{j=0}^mE(v_j).\]
	\end{theorem}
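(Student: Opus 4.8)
The plan is to follow the standard Sacks--Uhlenbeck blow--up analysis, in the bubble--tree form of Parker \cite{TP}. The engine of the whole argument is the \emph{small energy regularity} estimate for harmonic maps from a surface: there is a constant $\varepsilon_0>0$, depending only on $M$, such that if $v$ is harmonic on the unit disc $B_1\subset\mathbb{C}$ with $\int_{B_1}|\nabla v|^2<\varepsilon_0$, then $\sup_{B_{1/2}}|\nabla v|^2\le C\int_{B_1}|\nabla v|^2$, with every higher derivative controlled in the same way. This is proved from the equation $\Delta v+A(\nabla v,\nabla v)=0$ by elliptic estimates for the borderline quadratic gradient nonlinearity (a Wente/Rivi\`ere type argument, or Schoen's original one), and it is scale invariant because the energy and the harmonic map equation are conformally invariant in two dimensions. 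In particular a nonconstant harmonic sphere necessarily has energy at least $\varepsilon_0$.

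First I would construct the body map and the blow--up set. Since $E(u_i)\le E_0$, after passing to a subsequence $u_i\rightharpoonup u_0$ weakly in $W^{1,2}(\Sigma,M)$. Define
\[
\mathcal{S}_0=\Bigl\{\,x\in\Sigma:\ \liminf_{i\to\infty}\int_{B_r(x)}|\nabla u_i|^2\ge\varepsilon_0\ \text{ for every }r>0\,\Bigr\}.
\]
A Vitali covering argument against the uniform energy bound gives $\#\mathcal{S}_0\le E_0/\varepsilon_0<\infty$. Off $\mathcal{S}_0$ the small energy estimate yields uniform local $C^\infty$ bounds, so $u_i\to u_0$ in $C^\infty_{\mathrm{loc}}(\Sigma\setminus\mathcal{S}_0)$ and $u_0$ is harmonic there; since $u_0$ has finite energy, the Sacks--Uhlenbeck removable singularity theorem \cite{SU} extends $u_0$ to a smooth harmonic map on all of $\Sigma$. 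This is item (1) of Definition \ref{bubble convergence}.

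Next I would extract the bubbles. Fix $x\in\mathcal{S}_0$ and a conformal coordinate chart centered there. A point--picking argument---choose the smallest scale at which a fixed fraction (say $\varepsilon_0/2$) of the concentrating energy is already visible---produces centers $y_{i,j}\to x$ and radii $r_{i,j}\to 0$ so that, with $D_{i,j}$ the dilation taking the southern hemisphere onto $B_{r_{i,j}}(y_{i,j})$, the maps $u_i\circ D_{i,j}$ are harmonic with energy $\le E_0$, capture at least $\varepsilon_0/2$ of energy near the origin, and do not concentrate at the origin. By the small energy estimate a further subsequence converges in $C^\infty_{\mathrm{loc}}(\mathbb{C})$ to a nonconstant finite--energy harmonic map $\mathbb{C}\to M$, which the removable singularity theorem promotes to a harmonic sphere $v$. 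Since every nonconstant harmonic sphere has energy $\ge\varepsilon_0$, only finitely many bubbles can occur; running the construction again on the residual concentration of $u_i\circ D_{i,j}$ about its own blow--up set, and recording the successive scales, the process stops after at most $E_0/\varepsilon_0$ steps and yields the finite family $\{v_j\}_{j=0}^m$. A diagonal subsequence realizes items (1)--(3) of Definition \ref{bubble convergence} at once; the mutual separation in item (3), $\frac{r_{i_1,j}}{r_{i_2,j}}+\frac{r_{i_2,j}}{r_{i_1,j}}+\frac{|y_{i_1,j}-y_{i_2,j}|^2}{r_{i_1,j}r_{i_2,j}}\to\infty$, holds because at each stage the chosen pair $(y,r)$ is by construction at a base point distinct from, or at a scale incomparable to, each pair chosen before.

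The remaining step, and the genuinely hard one, is the \emph{energy identity} $\lim_{i\to\infty}E(u_i)=\sum_{j=0}^m E(v_j)$, i.e.\ item (4): no energy is lost in the \emph{necks} joining a bubble to its parent. By additivity of energy over a finite cover it is enough to show that each neck---an annulus $B_\rho(y_{i,j})\setminus B_\sigma(y_{i,j})$ with $\sigma$ much larger than the child scale and $\rho$ much smaller than the parent scale, on which the energy is below $\varepsilon_0$---carries no energy in the iterated limit ($i\to\infty$, then $\rho/\sigma\to\infty$). The classical mechanism is to combine the holomorphicity of the Hopf differential $\Phi(u)=\langle\partial_z u,\partial_z u\rangle\,dz^2$ of a harmonic map with a Pohozaev identity on circles, bounding the neck energy by boundary radial--energy terms of the form $\int_{\partial B_r}r|\partial_r u|^2$, and then feeding this into a three--annulus (ODE) comparison for $r\mapsto\int_{B_r}|\nabla u_i|^2$ that forces the energy to decay geometrically across the neck; hence the neck energy vanishes. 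This is exactly the ``zero neck energy'' part of Parker's bubble--tree theorem, and it is where the real work lies---the rest is the by now routine compactness machinery. Summing the energy on the body and on all the bubbles then gives the identity and finishes the proof.
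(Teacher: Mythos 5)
The paper does not prove this theorem: it is quoted verbatim from Parker \cite{TP} (together with Sacks--Uhlenbeck \cite{SU}) and used as a black box, so there is no in-paper argument to compare against. Your outline is precisely the standard Sacks--Uhlenbeck/Parker blow-up scheme ($\epsilon$-regularity, finite concentration set, removable singularity, iterated rescaling, and the Hopf-differential/Pohozaev neck estimate for the energy identity), and it is conceptually correct; as you yourself flag, the one step that is only described rather than executed is the ``zero neck energy'' estimate, which is exactly the part for which one would in practice still cite \cite{TP}.
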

	Actually the sequence doesn't need to be harmonic. It also works for \textit{almost harmonic maps} like stated in Theorem \ref{almost harmonic}.

Now we introduce varifold distance and state the relation between bubble convergence and varifold convergence. The following definition of varifold distance $d_V(\cdot,\cdot)$ can be found at \cite[Chapter 3]{CDD}.
	\begin{definition}[Varifold Distance]\label{varifold distance}
		Fix a closed mainifold $M$, let \[P_\Pi:\mathcal{G}_kM\to M\] 
		be the Grassmannian bundle of (unoriented) k-planes, that is, each fiber $P_\Pi^{-1}(p)$ is the set of all k-dimensional linear subspaces of the tangent space of $M$ at $p$. Since $\mathcal{G}_kM$ is compact, we can choose a countable dense subset ${h_n}$ of all continuous functions on $\mathcal{G}_kM$ with supremum norm at most one. Given a finite collection of maps 
		\begin{align*}
		    f_i&:X_i\to M,\quad f_i\in W^{1,2}(X_i,M),i=1,...,k_1,\\
		    g_j&:Y_j\to M,\quad g_j\in W^{1,2}(Y_j,M),j=1,...,k_2,
		\end{align*}
		here $\{X_i\}_{i=1}^{k_1},\{Y_j\}_{j=1}^{k_2}$ are compact surfaces of dimension $k$. 
		We consider the pairs $\{(X_i,F_i)\}_{i=1}^{k_1}$ and $\{(Y_j,G_j)\}_{j=1}^{k_2}$ with measurable maps \[F_i:X_i\to\mathcal{G}_kM,\quad\text{and   }G_j:Y_i\to\mathcal{G}_kM,\]
		so that 
		\[f_i=P_\Pi\circ F_i,\quad\text{and    }g_j=P_\Pi\circ G_j.\]
		 ($F_i(x)$ is the linear subspace of $df_i(T_xM)$.) $J_{f_i}$ is the Jacobian of $f_i$, then the varifold distance between them is defined by:
		\begin{equation}\label{2.49}
		d_V(\{f_i\}_{i=1}^{k_1},\{g_j\}_{j=1}^{k_2}):=\sum_{n=0}^{\infty}2^{-n}\Big|\sum_{i=1}^{k_1}\int_{X_i}h_n\circ F_iJ_{f_i}-\sum_{j=1}^{k_2}\int_{Y_j}h_n\circ G_jJ_{g_j}\Big|.    
		\end{equation}
		Since $M$ is a closed manifold isometrically embedded in $\mathbb{R}^N$. We can define varifold distance using  $\mathbb{R}^N$ instead of $M$. Namely, let \[P_{\tilde{\Pi}}:\mathcal{G}_k\mathbb{R}^N\to\mathbb{R}^N\]
		be the Grassmanian bundle of k-planes. We can choose a countable dense subset $\{\tilde{h}_n\}$ of all continuous functions on $\mathcal{G}_k\mathbb{R}^N$ with supremum norm at most one. If
		the pairs $\{X_i,\tilde{F}_i\}$ and $\{Y_j,\tilde{G}_j\}$ represent compact surfaces $\{X_i,Y_j\}$ of dimension $k$ with measurable maps 
		$\tilde{F}_i:X_i\to \mathcal{G}_k\mathbb{R}^N$ and $\tilde{G}_j:Y_j\to \mathcal{G}_k\mathbb{R}^N$
		, so that  
		\begin{align*}
		f_i&=\tilde{\Pi}\circ\tilde{F}_i,\\
		g_j&=\tilde{\Pi}\circ\tilde{G}_j.
		\end{align*}
		Then the varifold distance between them is defined by
		\begin{equation}
		d_{V_R}(\{f_i\}_{i=1}^{k_1},\{g_j\}_{j=1}^{k_2}):=\sum_{n=0}^{\infty}2^{-n}\Big|\sum_{i=1}^{k_1}\int_{X_i}\tilde{h}_n\circ \tilde{F}_iJ_{f_i}-\sum_{j=1}^{k_2}\int_{Y_j}\tilde{h}_n\circ \tilde{G}_jJ_{g_j}\Big|.   \end{equation} 
	\end{definition}
\begin{remark}\label{varifold energy}
We can assume $h_0$ is constant $1$ in definition \ref{varifold distance}. Given two maps $u,v:S^2\to M$ and $u,v\in W^{1,2}(S^2,M)\cap C^0(S^2,M)$. If $d_{V}(u,v)=0$, then it's easy to see by (\ref{2.49}), we have
\[\text{Area}(u)=\text{Area}(v),\:\text{and }E(u)=E(v).\]
\end{remark}
\begin{proposition}
	[Colding-Minicozzi, \cite{CD}]\label{bv}
		If a sequence $\{\gamma_j\}$ of $W^{1,2}(S^2,M)$ maps bubble converges to a collection of finitely many smooth maps $u_0,...,u_n:S^2\to M$ then it also varifold converges to $u_0,...,u_n$.
\end{proposition}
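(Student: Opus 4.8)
The plan is to verify the defining convergence $d_V(\gamma_j,\{u_i\}_{i=0}^n)\to 0$ termwise in the series (\ref{2.49}). Since the total mass $\sum_i\int J_{\gamma_j}\le\tfrac12\sup_jE(\gamma_j)<\infty$ is uniformly bounded (energy is bounded along a bubble-converging sequence), each summand of (\ref{2.49}) is dominated by $2^{-n}$ times a fixed constant, so by dominated convergence it suffices to show, for every fixed continuous $h:\mathcal{G}_2M\to[-1,1]$, that $\int_{S^2}h\circ F_{\gamma_j}\,J_{\gamma_j}\to\sum_{i=0}^n\int_{S^2}h\circ F_{u_i}\,J_{u_i}$; and since continuous functions on the compact space $\mathcal{G}_2M$ are uniformly approximable by Lipschitz ones, I may assume $h$ Lipschitz.

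Two elementary facts drive the argument. First, for a $W^{1,2}$ map $f$ from a surface one has the pointwise bound $J_f\le\tfrac12|\nabla f|^2$, so the varifold mass carried by a region is at most half of the energy there; moreover if $f_j\to f$ strongly in $W^{1,2}(\Omega)$ then, along a subsequence, $F_{f_j}\to F_f$ a.e.\ on $\{J_f>0\}$ and $J_{f_j}\to J_f$ in $L^1(\Omega)$ (because $|J_{f_j}-J_f|\le C(|\nabla f_j|+|\nabla f|)\,|\nabla f_j-\nabla f|$), whence $\int_\Omega h\circ F_{f_j}\,J_{f_j}\to\int_\Omega h\circ F_f\,J_f$ by dominated convergence, and a routine subsequence argument upgrades this to the full sequence. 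Second, the map–varifold is invariant under conformal reparametrization of the domain: if $\phi$ is a conformal diffeomorphism of $S^2$ then $F_{f\circ\phi}=F_f\circ\phi$ and $J_{f\circ\phi}=(J_f\circ\phi)\,J_\phi$, so $\int_{\phi(\Omega)}h\circ F_f\,J_f=\int_\Omega h\circ F_{f\circ\phi}\,J_{f\circ\phi}$ by change of variables; in particular this applies to each dilation $D_{i,j}$.

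Now fix small disks $B_\rho(x)$ about the points $x\in\mathcal{S}_0$ and large compact exhausting domains $\Omega_i\subset S^2$. For $j$ large, the body region $A^0:=S^2\setminus\bigcup_{x\in\mathcal{S}_0}B_\rho(x)$ and the rescaled bubble regions $D_{i,j}(\Omega_i)$, $i=1,\dots,n$, are pairwise disjoint: this uses $y_{i,j}\to x_{l_i}$ with $r_{i,j}\to0$ together with the separation condition~(3) of Definition~\ref{bubble convergence}. On $A^0$ the $\gamma_j$ converge strongly in $W^{1,2}$ to $u_0$, and after the conformal change of variables $D_{i,j}$ the $\gamma_j\circ D_{i,j}$ converge strongly in $W^{1,2}(\Omega_i)$ to $u_i$, so by the first two facts
\[
\int_{A^0}h\circ F_{\gamma_j}J_{\gamma_j}\to\int_{A^0}h\circ F_{u_0}J_{u_0},\qquad
\int_{D_{i,j}(\Omega_i)}h\circ F_{\gamma_j}J_{\gamma_j}\to\int_{\Omega_i}h\circ F_{u_i}J_{u_i}.
\]
For the remainder $R_j:=S^2\setminus\big(A^0\cup\bigcup_iD_{i,j}(\Omega_i)\big)$ the point is that its energy is asymptotically negligible: the same strong convergences give $\int_{A^0}|\nabla\gamma_j|^2\to\int_{A^0}|\nabla u_0|^2$ and $\int_{D_{i,j}(\Omega_i)}|\nabla\gamma_j|^2\to\int_{\Omega_i}|\nabla u_i|^2$, while the energy identity $E(\gamma_j)\to\sum_{i=0}^nE(u_i)$ from Definition~\ref{bubble convergence} yields
\[
\limsup_j\int_{R_j}|\nabla\gamma_j|^2\le\sum_{i=0}^nE(u_i)-\int_{A^0}|\nabla u_0|^2-\sum_{i=1}^n\int_{\Omega_i}|\nabla u_i|^2,
\]
whose right-hand side tends to $0$ as $\rho\to0$ and $\Omega_i\uparrow S^2$. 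Since $\big|\int_{R_j}h\circ F_{\gamma_j}J_{\gamma_j}\big|\le\tfrac12\int_{R_j}|\nabla\gamma_j|^2$, combining the three contributions and then letting $\rho\to0$, $\Omega_i\uparrow S^2$ (using $J_{u_i}\in L^1(S^2)$ and dominated convergence to replace $A^0,\Omega_i$ by $S^2$) forces both $\limsup_j$ and $\liminf_j$ of $\int_{S^2}h\circ F_{\gamma_j}J_{\gamma_j}$ to equal $\sum_{i=0}^n\int_{S^2}h\circ F_{u_i}J_{u_i}$, as required.

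The step I expect to be the main obstacle is the bookkeeping that renders the remainder $R_j$ harmless: one must organize the body/bubble/neck decomposition so that the pieces are genuinely disjoint for large $j$, and so that the energy identity accounts for \emph{all} of the mass — in particular, if the rescaled convergences $\gamma_j\circ D_{i,j}\to u_i$ themselves exhibit further concentration, the decomposition has to be iterated over the finitely many levels of the bubble tree, with disjointness at each level again supplied by the separation condition~(3). Conformal invariance of the varifold is precisely what makes this iteration clean, since it converts every rescaled picture back into a fixed-domain $W^{1,2}$-convergence statement; and the pointwise bound $J\le\tfrac12|\nabla\cdot|^2$ is what turns ``no energy loss'' into ``no varifold-mass loss'', which is the whole content of the proposition.
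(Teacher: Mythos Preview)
Your argument is correct and follows essentially the same route as the paper: decompose $S^2$ into a body region, rescaled bubble regions, and a neck remainder; use strong $W^{1,2}$ convergence together with conformal invariance on the first two pieces and the energy identity to kill the third. The only cosmetic difference is that the paper invokes its packaged quantity $d_B(\gamma_j,\{u_i\})<1/j$ from Definition~\ref{bubble norm}, which already hands you pairwise disjoint domains $\Omega_i^j$ and dilations $D_i^j$ satisfying the three inequalities (\ref{3.11})--(\ref{3.13}); this lets the paper skip the explicit bubble-tree bookkeeping and the double limit $\rho\to0$, $\Omega_i\uparrow S^2$ that you carry out by hand.
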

\begin{proof}
Let $E=\sum_{i=0}^nE(u_i)$, since $\gamma_j$ bubble converges to $\{u_i\}_{i=0}^n$, we assume without loss of generality that $d_B(\gamma_j,\{u_i\}_{i=0}^n)<1/j$. So there exists conformal dilations $D^j_{i}\in PSL(2,\mathbb{C}),\:i=0,...,n,$ and pairwise disjoint domains $\Omega^j_{0},...,\Omega^j_{n}$, $\bigcup_{i=0}^n\Omega^j_{i}\subset S^2.$ such that the following holds:
\begin{equation}
		\sum_{i=0}^n\Big(\int_{\Omega^j_{i}}|\nabla(\gamma_j-\nabla (u_i\circ D^j_{i})|^2\Big)^{1/2}<1/j,
		\end{equation}
		\begin{equation}\label{3.9}
		\int_{S^2\setminus\bigcup_{i=0}^n\Omega^j_{i}}|\nabla\gamma_j|^2<3/j^2+2(n+1)E/j,
		\end{equation}
		\begin{equation}
		\sum_{i=0}^n\Big(\int_{S^2\setminus\Omega^j_{i}}|\nabla(u_i\circ D^j_{i})|^2\Big)^{1/2}<1/j.
		\end{equation}
		
For each $u_i$, let $U_i$ denote the corresponding map to $\mathcal{G}_2M$. Similarly, for each $\gamma_j$, let $R_j$ denote the corresponding map to $\mathcal{G}_2M$. We will also use that the map $\nabla u\to J_u$ is continuous as a map from $L^2$ to $L^1$ and thus area of $u$ is continuous with respect to energy of $u$. (see \cite[proposition A.3]{CD})

The proposition now follows by showing for each $i$ and any $h\in C^0(\mathcal{G}_2M)$ that
\begin{align*}
    \sum_{i=0}^n\int_{S^2}h\circ U_i J_{u_i}&=\sum_{i=0}^n\lim_{j\to\infty}\int_{\Omega^j_{i}}h\circ U_i\circ D^j_i J_{u_i\circ D^j_i}\\
    &=\sum_{i=0}^n\lim_{j\to\infty}\int_{\Omega^j_{i}} h\circ R_j J_{\gamma_j}\\
    &=\lim_{j\to\infty}\int_{\cup_i\Omega^j_{i}}h\circ R_j J_{\gamma_j}\\
    &=\int_{S^2}h\circ R_j J_{\gamma_j},
\end{align*}
where the first equality is simply the change of variables formula for integration, and the last equality follows from (\ref{3.9}).
\end{proof}
	Given a collection of harmonic spheres $\{u_i\}_{i=0}^n$. Since $u_i:S^2\to M$, $u_i\in W^{1,2}(S^2,M)$ for each $i$, and $M$ is a closed Riemannian manifold isometrically embedded in $\mathbb{R}^N$, we have that  
	\[\sum_{i=0}^nu_i\circ\phi_i:S^2\to\mathbb{R}^N,\quad\sum_{i=0}^n u_i\circ\phi_i\in W^{1,2}(S^2,\mathbb{R}^N),\]
	for all $\phi_i\in PSL(S,\mathbb{C})$.
\begin{claim}\label{infw12}
For a map $\gamma\in W^{1,2}(S^2,M)$ with $d_B(\gamma,\{u_i\}_{i=0}^n)<\epsilon,$ $\epsilon<1$,
We have the following inequality
\[\inf\Big\{\int_{S^2}|\nabla\gamma-\nabla\big(\sum_{i=0}^nu_i\circ\phi_i\big)|^2\Big|\phi_i\in PSL(2,\mathbb{C})\Big\}<C(n)\epsilon,\]
for some constant $C(n)$ depends on $\{u_i\}_{i=0}^n$.
\end{claim}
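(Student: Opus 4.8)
The plan is to unpack the hypothesis $d_B(\gamma,\{u_i\}_{i=0}^n)<\epsilon$ directly: by Definition \ref{bubble norm} there are pairwise disjoint domains $\Omega_0,\dots,\Omega_n\subset S^2$ and conformal dilations $D_i\in PSL(2,\mathbb{C})$ satisfying (\ref{3.11}), (\ref{3.12}), (\ref{3.13}). The natural candidate for the competitor in the infimum is $\phi_i:=D_i$, so I would estimate $\int_{S^2}|\nabla\gamma-\nabla(\sum_{i=0}^n u_i\circ\phi_i)|^2$ by splitting $S^2$ into the pieces $\Omega_0,\dots,\Omega_n$ and the leftover set $\Omega^c:=S^2\setminus\bigcup_i\Omega_i$. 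The point is that on each $\Omega_i$ the dominant term is $\gamma - u_i\circ D_i$, which is controlled by (\ref{3.11}), while the cross-contributions from the other bubbles $u_k\circ D_k$, $k\neq i$, restricted to $\Omega_i\subset S^2\setminus\Omega_k$, are controlled by (\ref{3.13}); and on $\Omega^c$ the term $\nabla\gamma$ is controlled by (\ref{3.12}) and every $\nabla(u_i\circ D_i)$ by (\ref{3.13}) again.

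Concretely, I would write $\nabla\gamma - \nabla(\sum_{i=0}^n u_i\circ D_i) = \sum_{i=0}^n \chi_{\Omega_i}\big(\nabla\gamma - \nabla(u_i\circ D_i)\big) + \chi_{\Omega^c}\nabla\gamma - \sum_{i=0}^n \chi_{S^2\setminus\Omega_i}\nabla(u_i\circ D_i) + \text{(sign bookkeeping)}$, then take $L^2$ norms and apply the triangle inequality in $L^2$. The first group of $n+1$ terms each has $L^2$ norm at most the corresponding summand in (\ref{3.11}), so their total $L^2$ norm is $<\epsilon$. The term $\chi_{\Omega^c}\nabla\gamma$ has $L^2$ norm at most $(3\epsilon^2 + 2(n+1)\epsilon E)^{1/2} \leq (3+2(n+1)E)^{1/2}\sqrt{\epsilon}$ by (\ref{3.12}) and $\epsilon<1$. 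Each $\chi_{S^2\setminus\Omega_i}\nabla(u_i\circ D_i)$ has $L^2$ norm at most $\epsilon$ by (\ref{3.13}), contributing at most $(n+1)\epsilon$. Squaring the resulting bound on the total $L^2$ norm and absorbing constants depending only on $n$ and $E=\sum E(u_i)$ (hence only on $\{u_i\}$) into a single $C(n)$, and using $\epsilon<1$ to replace $\epsilon$, $\sqrt\epsilon$, $\epsilon^2$ uniformly by $\epsilon$, gives $\inf_{\phi_i}\int_{S^2}|\nabla\gamma-\nabla(\sum_i u_i\circ\phi_i)|^2 < C(n)\epsilon$ as claimed.

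The only mildly delicate point is the bookkeeping of which characteristic functions multiply which gradient on which region, and making sure the pairwise disjointness of the $\Omega_i$ is used so that the "diagonal" term $\gamma - u_i\circ D_i$ only appears once per region; this is routine but must be done carefully so that no term is double counted and every term is matched to one of (\ref{3.11})--(\ref{3.13}). A second small subtlety is that the claim's displayed expression (\ref{3.11}) as written in the paper has a minor typo — it should read $\int_{\Omega_i}|\nabla\gamma - \nabla(u_i\circ D_i)|^2$ — and I will use that corrected form. I expect no genuine obstacle here: the estimate is a direct consequence of the definition together with the elementary inequality $\sqrt\epsilon,\epsilon^2 \leq \epsilon$ for $0<\epsilon<1$ and the $L^2$ triangle inequality, with all hidden constants manifestly depending only on $n$ and the fixed collection $\{u_i\}_{i=0}^n$.
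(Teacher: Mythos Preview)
Your proposal is correct and follows essentially the same approach as the paper: choose the $D_i$ from Definition~\ref{bubble norm} as the competitor $\phi_i$, decompose $S^2$ into the $\Omega_i$ and the complement, and bound the resulting pieces by (\ref{3.11})--(\ref{3.13}), absorbing everything into a constant $C(n)=C(n,E)$ using $\epsilon<1$. If anything, your use of the $L^2$ triangle inequality on the decomposition $\nabla\gamma-\nabla(\sum_i u_i\circ D_i)=\sum_i \chi_{\Omega_i}(\nabla\gamma-\nabla(u_i\circ D_i))+\chi_{\Omega^c}\nabla\gamma-\sum_i \chi_{S^2\setminus\Omega_i}\nabla(u_i\circ D_i)$ is slightly more careful than the paper's displayed inequality, which as written omits the multiplicative constant coming from squaring a sum; this only changes the value of $C(n)$ and not the argument.
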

\begin{proof}
It follows from for any $\{\phi_i\}_{i=0}^n\in PSL(2,\mathbb{C})$ and pairewise disjoint domains $\{\Omega_i\}_{i=0}^n\subset S^2$ we have the following inequality:
\begin{align*}
\int_{S^2}|\nabla\gamma-\nabla\big(\sum_{i=0}^nu_i\circ\phi_i\big)|^2\leq&\sum_{i=0}^n\Big(\int_{\Omega_{i}}|\nabla(\gamma-\nabla (u_i\circ \phi_{i})|^2\Big)\\
&+\int_{S^2\setminus\bigcup_{i=0}^n\Omega_{i}}|\nabla\gamma|^2\\
&+\sum_{i=0}^n\Big(\int_{S^2\setminus\Omega_{i}}|\nabla(u_i\circ \phi_{i})|^2\Big)\\
\end{align*}
By $d_B(\gamma,\{u_i\}_{i=0}^n)<\epsilon$, we know that there exist pairwise disjoint domains $\{\tilde{\Omega}_i\}_{i=0}^n\subset S^2$, and conformal transformations $\{\tilde{\phi}_i\}_{i=0}^n\in PSL(2,\mathbb{C})$ such that
\begin{align*}
\sum_{i=0}^n\Big(\int_{\tilde{\Omega}_{i}}|\nabla(\gamma-\nabla (u_i\circ \tilde{\phi}_{i})|^2\Big)&+\int_{S^2\setminus\bigcup_{i=0}^n\tilde{\Omega}_{i}}|\nabla\gamma|^2\\
&+\sum_{i=0}^n\Big(\int_{S^2\setminus\tilde{\Omega}_{i}}|\nabla(u_i\circ \tilde{\phi}_{i})|^2\Big)\\
&<2\epsilon^2+3\epsilon^2+2(n+1)\epsilon E.
\end{align*}
Which implies the desired result:
\[\inf\Big\{\int_{S^2}|\nabla\gamma-\nabla\big(\sum_{i=0}^nu_i\circ\phi_i\big)|^2\Big|\phi_i\in PSL(2,\mathbb{C})\Big\}<C(n)\epsilon.\]
Here $C(n)=5+2(n+1)E$.
\end{proof}
Now we know that by Theorem \ref{bv} bubble convergence implies varifold convergence, but the converse is not true, since varifold convergence simply implies that the images of two maps is close in measure sense. The following Proposition states that varifold convergence implies the term $d_B(\cdot,\cdot)$ goes to zero if the term:
\[\inf\Big\{\int_{S^2}|\nabla\gamma-\nabla\big(\sum_{i=0}^nu_i\circ\phi_i\big)|^2\Big|\phi_i\in PSL(2,\mathbb{C})\Big\}\]
goes to $0$.
	\begin{proposition}\label{for unstable}
		Given a collection of finitely many harmonic spheres $\{u_i\}_{i=0}^n$, for all $\epsilon>0$, there exists $\delta>0$ so that if $\gamma\in W^{1,2}(S^2,M)$ satisfies the following conditions:
		\[\inf\Big\{\int_{S^2}|\nabla\gamma-\nabla\big(\sum_{i=0}^nu_i\circ\phi_i\big)|^2\Big|\phi_i\in PSL(2,\mathbb{C})\Big\}<\delta,\]
		and
		\[d_{V_R}(\gamma,\{u_i\}_{i=0}^n)<\delta,\]
		we have 
		\[d_{B}(\gamma,\{u_i\}_{i=0}^n)<\epsilon.\]
	\end{proposition}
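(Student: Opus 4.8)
The plan is to argue by contradiction: pass to a subsequence, analyze the maps $u_i\circ\phi_i$ as the Möbius parameters degenerate, use the varifold hypothesis to rule out the two ways the resulting bubbles can fail to be separated, and then construct the domains $\Omega_i$ directly to contradict $d_B\ge\epsilon_0$. So suppose the conclusion fails: there are $\epsilon_0>0$ and $\gamma_j\in W^{1,2}(S^2,M)$ with $d_{V_R}(\gamma_j,\{u_i\}_{i=0}^n)<1/j$, with $\inf\{\int_{S^2}|\nabla\gamma_j-\nabla(\sum_{i}u_i\circ\phi_i)|^2:\phi_i\in PSL(2,\mathbb{C})\}<1/j$, yet $d_B(\gamma_j,\{u_i\}_{i=0}^n)\ge\epsilon_0$. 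Pick $\phi_i^j\in PSL(2,\mathbb{C})$ realizing the infimum to within $1/j$ and set $\Gamma_j:=\sum_{i=0}^nu_i\circ\phi_i^j$, so $\|\nabla\gamma_j-\nabla\Gamma_j\|_{L^2}\to0$; since $\|\nabla(u_i\circ\phi_i^j)\|_{L^2}^2=E(u_i)$ by conformal invariance, Cauchy--Schwarz on the cross terms gives $\|\nabla\Gamma_j\|_{L^2}\le\sum_i\sqrt{E(u_i)}$, so both sequences have uniformly bounded energy. A constant $u_i$ contributes only a fixed translation to $\Gamma_j$, which $d_{V_R}\to0$ forces to be trivial (a nonzero translation would move the limiting varifold off the fixed set $\bigcup_iu_i(S^2)$), and its domain $\Omega_i$ can be inserted at the end on a tiny ball where $\gamma_j$ has little energy; so we may assume every $u_i$ is non-constant. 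Passing to a subsequence, for each $i$ either $\phi_i^j$ stays in a compact subset of $PSL(2,\mathbb{C})$, with $\phi_i^j\to\phi_i^\infty$ and $u_i\circ\phi_i^j\to u_i\circ\phi_i^\infty$ strongly in $W^{1,2}(S^2)$, or $\phi_i^j$ degenerates; in the latter case the energy measure of $u_i\circ\phi_i^j$, being the pushforward of $|\nabla u_i|^2\,dA$ under the (degenerating) inverse, concentrates at one point, so there are $x_i\in S^2$, centers $y_i^j\to x_i$ and radii $r_i^j\to0$ with $\int_{S^2\setminus B_i^j}|\nabla(u_i\circ\phi_i^j)|^2\to0$ for $B_i^j:=B_{r_i^j}(y_i^j)$, while $u_i\circ\phi_i^j\to$ const in $W^{1,2}_{\mathrm{loc}}(S^2\setminus\{x_i\})$. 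Call two degenerating indices $i\neq i'$ \emph{separated} when $\frac{r_i^j}{r_{i'}^j}+\frac{r_{i'}^j}{r_i^j}+\frac{|y_i^j-y_{i'}^j|^2}{r_i^jr_{i'}^j}\to\infty$, as in Definition \ref{bubble convergence}(3).

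The crux — and the main obstacle — is to show that, after a further subsequence, at most one index is non-degenerating and every pair of degenerating indices is separated. If two indices $i_0\neq i_1$ were non-degenerating, then, writing $\mathcal S=\{x_i:i\text{ degenerating}\}$, the maps $\gamma_j$ converge strongly in $W^{1,2}_{\mathrm{loc}}(S^2\setminus\mathcal S)$ to the smooth $M$-valued map $\Gamma_\infty=u_{i_0}\circ\phi_{i_0}^\infty+u_{i_1}\circ\phi_{i_1}^\infty+C$, a sum of two non-constant harmonic spheres and a constant, and — arguing as in the proof of Proposition \ref{bv} — the varifolds $V_{\gamma_j}$ converge to $V_{\Gamma_\infty}+\sum_{i\text{ deg}}(T_{c_i})_\sharp V_{u_i}$, where $T_{c_i}$ is translation by the value the macroscopic map takes at $x_i$ (translation fixes tangent planes and Jacobians). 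Since $d_{V_R}(\gamma_j,\{u_i\})\to0$ this limit is $\sum_iV_{u_i}$; comparing total masses via Remark \ref{varifold energy} gives $\mathrm{Area}(\Gamma_\infty)=\mathrm{Area}(u_{i_0})+\mathrm{Area}(u_{i_1})$, and the full identity reads $V_{\Gamma_\infty}=V_{u_{i_0}}+V_{u_{i_1}}+\sum_{i\text{ deg}}\big(V_{u_i}-(T_{c_i})_\sharp V_{u_i}\big)$. But $V_{\Gamma_\infty}$ is a genuine integral $2$-varifold supported over the connected set $\Gamma_\infty(S^2)$, while (by unique continuation no $V_{u_i}$ is concentrated on a lower-dimensional set, so) the right-hand side is a fixed combination of translated copies of the $V_{u_i}$; a support-plus-mass analysis shows these cannot agree — the cleanest instance being $u_{i_1}\circ\phi_{i_1}^\infty=u_{i_0}\circ\phi_{i_0}^\infty+\mathrm{const}$, where the left side has area $4\,\mathrm{Area}(u_{i_0})$ and the right side $2\,\mathrm{Area}(u_{i_0})$. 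The same argument, after rescaling the common bubble domain of two non-separated degenerating indices to $S^2$, excludes that configuration; this is precisely where the varifold hypothesis is indispensable, as it is what forbids superposition and partial cancellation of bubbles.

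Granting this, for large $j$ the bubble domains $\{B_i^j\}$ are nested; set $\Omega_i:=B_i^j\setminus\bigcup\{B_{i'}^j:B_{i'}^j\subsetneq B_i^j\}$ for degenerating $i$, $\Omega_{i_0}:=S^2\setminus\bigcup_{i\text{ deg}}B_i^j$ for the (at most one) non-degenerating index, and $D_i:=\phi_i^j$ (or the dilation onto $B_i^j$, differing by a bounded reparametrization). These $\Omega_i$ are pairwise disjoint, and using $\|\nabla\gamma_j-\nabla\Gamma_j\|_{L^2}\to0$, the escape of each bubble's energy from every fixed domain disjoint from its own shrinking bubble domain, and Cauchy--Schwarz on the cross terms over such domains, one checks that the left-hand sides of $(\ref{3.11})$, $(\ref{3.12})$ and $(\ref{3.13})$ all tend to $0$; reinserting the constant-$u_i$ domains on tiny low-energy balls of $\gamma_j$ alters this by $o(1)$. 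Thus $d_B(\gamma_j,\{u_i\}_{i=0}^n)\to0$, contradicting $d_B(\gamma_j,\{u_i\}_{i=0}^n)\ge\epsilon_0$. I expect the second paragraph to be the hard part: translating the varifold convergence — in particular the area identity $\mathrm{Area}(\gamma_j)\to\sum_i\mathrm{Area}(u_i)$ — into the geometric separation of the reparametrized harmonic spheres; the remainder is the standard bubble-tree bookkeeping, carried out for $\gamma_j$ through its approximant $\Gamma_j$.
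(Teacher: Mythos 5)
Your overall strategy coincides with the paper's: argue by contradiction, use the infimum hypothesis to produce near-optimal approximants $\Gamma_j=\sum_i u_i\circ\phi_i^j$, and use the varifold hypothesis to force the M\"obius parameters to spread out so that disjoint domains $\Omega_i$ can be built. The difficulty is that the step you yourself flag as the crux --- after a subsequence, at most one index is non-degenerating and every degenerating pair is separated --- is not actually proved. The ``support-plus-mass analysis'' is asserted rather than carried out: you verify only the special case $u_{i_1}\circ\phi_{i_1}^\infty=u_{i_0}\circ\phi_{i_0}^\infty+\mathrm{const}$, whereas in general you would need to show that the varifold of a superposition $u_{i_0}\circ\phi_{i_0}^\infty+u_{i_1}\circ\phi_{i_1}^\infty+C$ can never equal $V_{u_{i_0}}+V_{u_{i_1}}$ (plus your correction terms), and there is no general relation between the area measure of a sum of maps and the sum of their area measures, so this requires a genuine argument. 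Moreover, the identity you invoke, $V_{\gamma_j}\to V_{\Gamma_\infty}+\sum_{i\ \mathrm{deg}}(T_{c_i})_\sharp V_{u_i}$, already presupposes the separation being proved: if two bubbles concentrate at the same point at comparable scales, their joint contribution to the limit is the varifold of a sum of two maps, not a sum of two translated varifolds, so the bookkeeping is circular as written. (The claim that the separated bubble domains are ``nested'' is also unjustified --- separated balls may simply be disjoint --- though that is repairable.)

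For comparison, the paper avoids the general-position analysis: it reduces to two spheres, splits according to which clause of $d_B\ge\epsilon$ fails, disposes of the energy clause (\ref{1158}) with the mass identity of Remark \ref{varifold energy}, and in the remaining case (\ref{1358}) normalizes $\phi_1^j=\mathrm{id}$, deduces divergence of $\phi_0^j$ from the measure-theoretic closeness of $(u_0\circ\phi_0^j+u_1)(S^2)$ to $u_0(S^2)\cup u_1(S^2)$, and then derives a purely quantitative contradiction by choosing a ball $B_R(q)$ containing almost all of the energy of $u_1$ and almost none of that of $u_0\circ\phi_0^j$, playing (\ref{1358}) off against the $L^2$ closeness of $\nabla\gamma_j$ to $\nabla(u_0\circ\phi_0^j+u_1)$ on $B_R(q)$ and its complement. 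To rescue your version you would need to supply, at the full level of generality you claim, the missing proof that a non-trivial superposition of two reparametrized $u_i$'s is incompatible with the varifold hypothesis; as it stands this is a gap, not a completed alternative route.
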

	\begin{proof}
		For the case that $\{u_i\}_{i=0}^n$ contains only one harmonic sphere say $u_0$, then given $\epsilon>0$ and let $\delta=\epsilon/2$. The condition
		\[\inf\Big\{\int_{S^2}|\nabla\gamma-\nabla\big(u_0\circ\phi\big)|^2\Big|\phi\in PSL(2,\mathbb{C})\Big\}<\delta\]
		implies that there exists $\phi_0\in PSL(2,\mathbb{C})$ such that $\int_{S^2}|\nabla\gamma-\nabla\big(u_0\circ\phi_0\big)|^2<\epsilon$, 
		thus we have $d_{B}(\gamma,u_0)<\epsilon.$

		For the general case it suffices to consider that $\{u_i\}_{i=0}^n=\{u_0,u_1\}$. We argue by contradiction, suppose there exists  $\epsilon>0$ and a sequence $\{\gamma_j\}_{j\in\mathbb{N}}$, $\gamma_j\in W^{1,2}(S^2,M)$, such that \[\inf\Big\{\int_{S^2}|\nabla\gamma_j-\nabla\big(\sum_{i=0,1}u_i\circ\phi_i\big)|^2\Big|\phi_i\in PSL(2,\mathbb{C})\Big\}<1/j
		,\:\text{and }d_{V_R}(\gamma_j,\{u_i\}_{i=0,1})<1/j,\]
		but does not imply $d_B(\gamma_j,
		\{u_0,u_1\})<\epsilon$ for all $j\in\mathbb{N}$. That means for any parewise disjoint domains $\Omega_i\subset S^2$, $\Omega_0\cap\Omega_1=\{\emptyset\}$, and conformal automorphisms $\phi_i\in PSL(2,\mathbb{C}), \:i=0,1$  with $\sum_{i=0,1}\Big(\int_{S^2\setminus\Omega_i}|\nabla (u_i\circ\phi_i)|^2\Big)^{1/2}<\epsilon,$ we have either
		\begin{equation}\label{1158}
		\int_{S^2\setminus\bigcup_i\Omega_i}|\nabla\gamma_j|^2\geq 3\epsilon^2+4\epsilon E,
		\end{equation}
		where $E=E(u_0)+E(u_1)$, or
		\begin{equation}\label{1358}
		\sum_{i=0,1}\Big(\int_{\Omega_i}|\nabla\gamma_j-\nabla(u_i\circ\phi_i)|^2\Big)^{1/2}\geq\epsilon.
		\end{equation}
		(see definition \ref{bubble norm}). We first consider the case of (\ref{1158}) assuming the following condition $\sum_{i=0,1}\Big(\int_{\Omega_i}|\nabla\gamma_j-\nabla(u_i\circ\phi_i)|^2\Big)^{1/2}<\epsilon.$ We consider the following inequality
		\begin{align*}
		\int_{S^2}|\nabla u_0|^2+\int_{S^2}|\nabla u_1|^2=\sum_{i=0,1}&\Big(\int_{\Omega_i}|\nabla (u_i\circ\phi)|^2+\int_{S^2\setminus\Omega_i}|\nabla(u_i\circ\phi)|^2\Big)\\
		<\sum_{i=0,1}&\Big(\int_{\Omega_i}|\nabla\gamma_j|^2\\
		&+2\big(\int_{\Omega_i}|\nabla\gamma_j|^2\big)^{1/2}\big(\int_{\Omega_i}|\nabla\gamma_j-\nabla( u_i\circ\phi)|^2\big)^{1/2}\\
		&+\int_{\Omega_i}|\nabla\gamma_j-\nabla (u_i\circ\phi)|^2+\int_{S^2\setminus\Omega_i}|\nabla(u_i\circ\phi)|^2\Big)\\
		<\sum_{i=0,1}&\int_{\Omega_i}|\nabla\gamma_j|^2+2\epsilon\big(\int_{\Omega_i}|\nabla\gamma_j|^2\big)^{1/2}+\epsilon^2+\epsilon^2.
		\end{align*}
Since $d_{V_R}(\gamma_j,\{u_i\}_{i=0,1})<1/j$,  we know that 
\[\lim_{j\to\infty}E(\gamma_j)=E(u_0)+E(u_1),\]
(see remark \ref{varifold energy}.)
For $j$ sufficiently large, we have that \begin{align*}
E(\gamma_j)&=\int_{\bigcup_i\Omega_i}|\nabla\gamma_j|^2+\int_{S^2\setminus\bigcup_i\Omega_i}|\nabla\gamma_j|^2\\
&<\sum_{i=0,1}\int_{\Omega_i}|\nabla\gamma_j|^2+2\epsilon\big(\int_{\Omega_i}|\nabla\gamma_j|^2\big)^{1/2}+\epsilon^2+\epsilon^2\\
&<\sum_{i=0,1}\int_{\Omega_i}|\nabla\gamma_j|^2+2\epsilon\big(E(u_0)+E(u_1)\big)^{1/2}+2\epsilon^2.
\end{align*}
which implies that
\[\int_{S^2\setminus\bigcup_i\Omega_i}|\nabla\gamma_j|^2< 4\epsilon(E(u_0)+E(u_1))^{1/2}+ 2\epsilon^2,\]
contradicting our assumption (\ref{1158}). 

Now we consider the case of ($\ref{1358}$). By our assumption of the sequence $\{\gamma_j\}_{j\in\mathbb{N}}$: $\inf\Big\{\int_{S^2}|\nabla\gamma_j-\nabla\big(\sum_{i=0,1}u_i\circ\phi_i\big)|^2\Big|\phi_i\in PSL(2,\mathbb{C})\Big\}<1/j$, we know that for each $j$ there exist $\phi_{0}^j,\phi_1^j$ such that
\[\int_{S^2}|\nabla\gamma_j-\nabla(u_0\circ\phi_0^j+u_1\circ\phi_1^j)|^2<2/j,\]
which implies that:
\begin{equation}\label{a}
\begin{split}
		d_{V_R}\big((u_0\circ\phi_0^j+u_1\circ\phi_1^j),\{u_i\}_{i=0,1}\big)\leq& d_{V_R}(u_0\circ\phi_0^j+u_1\circ\phi_1^j,
		\gamma_j)\\
		&+d_{V_R}(\gamma_j,\{u_i\}_{i=0,1})\\
		<&C(j)+1/j,    
		\end{split}
		\end{equation}
		for some $C(j)\to 0$ as $j\to\infty$. Since $(u_0\circ\phi_0^j+u_1\circ\phi_1^j)(S^2)=(u_0\circ(\phi_0^j\circ(\phi_1^j)^{-1})+u_1)(S^2)$, we can assume $\phi_1^j$ is identity for all $j\in\mathbb{N}$. Thus by (\ref{a}) we know that $(u_0\circ\phi_0^j+u_1)(S^2)$ can be arbitrarily close to $\{u_0(S^2),u_1(S^2)\}$ as $j\to\infty$ in measure sense in $\mathbb{R}^N$ (see definition \ref{varifold distance}), this implies that $\{\phi_0^j\}_{j\in\mathbb{N}}$ is a divergent sequence in $PSL(2,\mathbb{C})$. Now we choose $R>0$  so that for $R'\geq R$ we have the following \[\Big(\int_{S^2\setminus B_{R'}(p)}|\nabla u_1|^2\Big)^{1/2}<\epsilon/5,\:\forall p\in S^2.\]
		Since $\{\phi_0^j\}_{j\in\mathbb{N}}$ is diverging, then for $j$ sufficiently large we can find $q\in S^2$ such that
		\begin{equation}\label{2.57}
		\int_{B_R(q)}|\nabla(u_0\circ\phi_0^j)|^2+\int_{S^2\setminus B_R(q)}|\nabla u_1|^2<\epsilon^2/10, \end{equation}
		 Then the assumption (\ref{1358}) implies
		\begin{equation}\label{2.56}
	\Big(\int_{B_R(q)}|\nabla\gamma_j-\nabla u_1|^2\Big)^{1/2}+\Big(\int_{S^2\setminus B_R(q)}|\nabla\gamma_j-\nabla(u_0\circ\phi_0^j)|^2\Big)^{1/2}\geq\epsilon.   
		\end{equation}
		From the inequality
	\begin{align*}
	\int_{S^2}|\nabla\gamma_j-\nabla(u_0\circ\phi_0^j+u_1)|^2&\geq \Big|\Big(\int_{B_R(q)}|\nabla\gamma_j-\nabla u_1|^2+\int_{S^2\setminus B_R(q)}|\nabla\gamma_j-\nabla(u_0\circ\phi_0^j)|^2\Big)\\
	   &-\Big(\int_{B_R(q)}|\nabla(u_0\circ\phi_0^j)|^2+\int_{S^2\setminus B_R(q)}|\nabla u_1|^2\Big)\Big|,
\end{align*}
with the assumption  $\int_{S^2}|\nabla\gamma_j-\nabla(u_0\circ\phi_0^j+u_1)|^2<2/j $ and (\ref{2.56}) we have the following:
\begin{align*}
 \int_{B_{R}(q)}|\nabla(u_0\circ\phi_0^j)|^2+\int_{S^2\setminus B_R(q)}|\nabla u_1|^2&\geq\int_{B_R(q)}|\nabla\gamma_j-\nabla u_1|^2\\
 &+\int_{S^2\setminus B_R(q)}|\nabla\gamma_j-\nabla(u_0\circ\phi_0^j)|^2\\
 &-\int_{S^2}|\nabla\gamma_j-\nabla(u_0\circ\phi_0^j+u_1)|^2\\
 &\geq \epsilon^2/5,
\end{align*}
for $j$ sufficiently large. It contradicts with the assumption (\ref{2.57}). Thus we have proved proposition \ref{for unstable}.
	\end{proof}
	\begin{remark}
Combining claim \ref{infw12} and proposition \ref{for unstable} we have for all $\epsilon>0$, there exists $\delta>0$ such that 
\[d_{B}(\gamma,\{u_i\}_{i=0}^n)<\epsilon,\]
if and only if
		\[\inf\Big\{\int_{S^2}|\nabla\gamma-\nabla\big(\sum_{i=0}^nu_i\circ\phi_i\big)|^2\Big|\phi_i\in PSL(2,\mathbb{C})\Big\}<\delta,\]
		and
		\[d_{V_R}(\gamma,\{u_i\}_{i=0}^n)<\delta.\]
	\end{remark}
\subsection{Statement of Colding and Minicozzi's Min-max thoery}
We state some basic notations and min-max theorem in this section.
\begin{definition}[Width]\label{width}
Let $\Omega$ be the set of continuous maps $\sigma:S^2\times [0,1] \to M$ so that for each $t\in[0,1]$ the map $\sigma(\cdot,t)$ is in $C^0(S^2,M)\cap W^{1,2}(S^2,M)$, the map $t\to \sigma(\cdot,t)$ is continuous from $[0,1]$ to $C^0(S^2,M)\cap W^{1,2}(S^2,M)$ in a strong sense. Given a map $\beta\in\Omega$, the homotopy class $\Omega_\beta$ is defined to be the set of maps $\sigma\in\Omega$ that is homotopic to $\beta$ through maps in $\Omega$. We'll call any such $\sigma$ a \textit{sweepout}. 
		The width $W=W_E(\beta,M)$ associated to the homotopy class $\Omega_\beta$ is defined by:
		\begin{equation}\label{44.4}
		W:=\inf_{\sigma\in\Omega_\beta}\max_{t\in[0,1]}E(\sigma(\cdot,t)).
		\end{equation}
		We could alternatively define the width using area rather than energy by setting
		\[W_A:=\inf_{\sigma\in\Omega_\beta}\max_{t\in[0,1]}\text{Area}(\sigma(\cdot,t)).\]
\end{definition}

	\begin{remark}
		We're interested in the case where $\beta$ induces a map in a nontrivial class in $\pi_3(M)$, in which case the width is positive.
	\end{remark}	

\begin{definition}[Minimizing sequence]\label{minimizing sequence}
Given a sweepout $\gamma_{j}(\cdot,t):S^2\times[0,1]\to M,$ we call $\{\gamma_{j}(\cdot,\cdot)\}_{j\in\mathbb{N}}$ a \textit{minimizing sequence} if $$\lim\limits_{j\to\infty}\max_{t\in[0,1]}E(\gamma_j(\cdot,t))=W.$$
We call $\{\gamma_j(\cdot,t_j)\}_{j\in\mathbb{N}}$ a \textit{min-max sequence} if 
\[\lim_{j\to\infty}E(\gamma_j(\cdot,t_j))=W.\]
\end{definition}

	\begin{definition}\label{equivalentclass}
		We define the equivalent class of $u:S^2\to M$ to be:
		\[[u]:=\Big\{g:S^2\to M\Big|\text{ if } u=g\circ\phi\text{ for some }\phi\in PSL(2,\mathbb{C})\Big\},\]
	\end{definition}
	\begin{remark}
	Given maps $\gamma,\{u_i\}_{i=0}^n\in W^{1,2}(S^2,M)$ with $d_B(\gamma,\{u_i\}_{i=0}^n)<\epsilon$, we have \[d_B(\gamma,\{g_i\}_{i=0}^n)<\epsilon,\:\text{if }[g_i]=[u_i],\:i=0,...,n.\]
	\end{remark}
\begin{definition}[Image set]\label{image set}
		The image set $\Lambda(\{\gamma_j(\cdot,t)\})$ of $\{\gamma_j(\cdot,t)\}_{j\in\mathbb{N}}$ is defined to be:
		\begin{align*}
		 \Lambda(\{\gamma_j(\cdot,t)\}_{j\in\mathbb{N}}):=\Big\{\{[u_i]\}_{i=0}^n:&\text{there  exists a sequence }\{i_j\}\to\infty, t_{i_j}\in[0,1],\\
		 &\text{such that }\gamma_{i_j}(\cdot,t_{i_j})\text{ bubble converges to }\{u_i\}_{i=0}^n\Big\},
		\end{align*}

\end{definition}
Now we state the min-max theorem for harmonic sphere. Theorem \ref{CDDD} isn't exactly what's stated in \cite{CD}, it uses $d_B(\cdot,\cdot)$ instead of varifold norm and applies to any minimizing sequence. We prove in appendix \ref{F1.section} that Colding-Minicozzi's result \cite{CD} does imply theorem \ref{CDDD}.
\begin{theorem}[Min-Max for harmonic sphere]\label{CDDD}
    Given a closed manifold $M$ with dimension at least three, and a map $\beta\in\Omega$ representing a nontrivial class in $\pi_3(M)$, then for any sequence of sweepouts $\gamma_j\in\Omega_\beta$ with 
    \[\lim_{j\to\infty}\max_{s\in[0,1]}E(\gamma_j(\cdot,s))= W,\] 
    there exists a subsequence $\{i_j\}\to\infty$,  $t_{i_j}\in[0,1]$, and a collection of finitely many harmonic spheres $\{u_i\}_{i=0}^n$ such that
    \[d_B(\gamma_{i_j}(\cdot,t_{i_j}),\{u_i\}_{i=0}^n)<1/j.\]
    \end{theorem}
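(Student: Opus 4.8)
The plan is to deduce Theorem~\ref{CDDD} from the min-max theorem of \cite{CD} in three steps: extract from \cite{CD} a min-max sequence that is \emph{almost harmonic}; apply the bubble tree theorem for almost harmonic maps to produce a limiting finite collection of harmonic spheres with vanishing neck energy; and convert convergence in \cite{CD}'s varifold distance into $d_B$-convergence by Proposition~\ref{for unstable}, all while staying $W^{1,2}$-close to the \emph{given} sweepouts so that the statement holds for an arbitrary minimizing sequence and not just the tightened one built in \cite{CD}. Concretely, given $\gamma_j\in\Omega_\beta$ with $\max_t E(\gamma_j(\cdot,t))\to W$, apply Colding--Minicozzi's harmonic-replacement (pull-tight) procedure a fixed finite number $N$ of times to each slice $\gamma_j(\cdot,t)$, continuously in $t$, to get $\tilde\gamma_j\in\Omega_\beta$ homotopic to $\gamma_j$ with $E(\tilde\gamma_j(\cdot,t))\le E(\gamma_j(\cdot,t))$ for all $t$; then $\{\tilde\gamma_j\}$ is again minimizing. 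Pick $t_j$ with $E(\tilde\gamma_j(\cdot,t_j))=\max_t E(\tilde\gamma_j(\cdot,t))$. From $W\le\max_t E(\tilde\gamma_j(\cdot,t))\le\max_t E(\gamma_j(\cdot,t))\to W$ we obtain $E(\tilde\gamma_j(\cdot,t_j))\to W$, $E(\gamma_j(\cdot,t_j))\to W$ and $E(\gamma_j(\cdot,t_j))-E(\tilde\gamma_j(\cdot,t_j))\to 0$, and by the analysis behind \cite{CD} the slice $\tilde\gamma_j(\cdot,t_j)$ is almost harmonic (its tension field tends to $0$ in $L^2$, equivalently a further harmonic replacement lowers its energy by only $o(1)$), as needed to apply Theorem~\ref{almost harmonic}.

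By Theorem~\ref{almost harmonic}, after passing to a subsequence $\tilde\gamma_j(\cdot,t_j)$ bubble converges, in the sense of Definition~\ref{bubble convergence}, to a fixed finite collection of harmonic spheres $\{u_i\}_{i=0}^n$ with $\sum_{i=0}^n E(u_i)=\lim_j E(\tilde\gamma_j(\cdot,t_j))=W$. By Proposition~\ref{bv} this gives $d_{V_R}(\tilde\gamma_j(\cdot,t_j),\{u_i\}_{i=0}^n)\to 0$. Moreover, writing $D_{i,j}$ for the dilations furnished by the bubble tree, the strong $W^{1,2}$-convergence on the body and on each bubble together with the energy identity in Definition~\ref{bubble convergence}, which forces the energy on the necks to vanish, gives $\int_{S^2}|\nabla\tilde\gamma_j(\cdot,t_j)-\nabla(\sum_{i=0}^n u_i\circ D_{i,j})|^2\to 0$; in particular the infimum in Proposition~\ref{for unstable} tends to $0$. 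Hence Proposition~\ref{for unstable} yields $d_B(\tilde\gamma_j(\cdot,t_j),\{u_i\}_{i=0}^n)\to 0$.

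To carry this back to the original sweepouts, invoke the energy-convexity estimate for harmonic replacement from \cite{CD}, which controls the $W^{1,2}$-displacement of each replacement step by a constant times the square root of the energy dropped at that step; summing the $N$ steps and using $E(\gamma_j(\cdot,t_j))-E(\tilde\gamma_j(\cdot,t_j))\to 0$ gives $\|\gamma_j(\cdot,t_j)-\tilde\gamma_j(\cdot,t_j)\|_{W^{1,2}}\to 0$. The defining conditions (\ref{3.11})--(\ref{3.13}) of $d_B$ are stable under small $W^{1,2}$ perturbations of their first argument (keep the same $D_{i,j}$ and domains $\Omega_i$ and estimate by the triangle inequality, as in the remark motivating the definition of the variation in the introduction), so $d_B(\gamma_j(\cdot,t_j),\{u_i\}_{i=0}^n)\to 0$ as well. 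Choosing indices $i_j\to\infty$ and relabelling the selected times as $t_{i_j}$ so that $d_B(\gamma_{i_j}(\cdot,t_{i_j}),\{u_i\}_{i=0}^n)<1/j$ finishes the proof.

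The main obstacle is the quantitative input from \cite{CD} used in the first two paragraphs: one must know that the pull-tight, besides not raising the maximal energy, renders the near-maximal slices of the tightened sweepouts almost harmonic (so the bubble tree theorem applies with no neck energy), and that the per-step energy decrease of harmonic replacement controls the per-step $W^{1,2}$ change, which is exactly what lets the possibly untightened minimizing sequence stay $W^{1,2}$-close to the tightened one along the chosen slices. Granting these features of \cite{CD}, the remainder is bookkeeping with Definitions~\ref{bubble convergence} and \ref{bubble norm}, Proposition~\ref{bv} and Proposition~\ref{for unstable}.
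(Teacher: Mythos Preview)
Your strategy is the same as the paper's: pull-tight via Colding--Minicozzi's harmonic replacement (Theorem~\ref{harmonicreplacem}), apply bubble compactness for almost harmonic maps (Theorem~\ref{almost harmonic}) to the tightened min-max slices, and use the energy-convexity estimate (Theorem~\ref{convex}, Corollary~\ref{pulltight}) to show $\|\gamma_j(\cdot,t_j)-\tilde\gamma_j(\cdot,t_j)\|_{W^{1,2}}\to 0$ and carry $d_B$-convergence back to the original sequence.

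There is one genuine gap. Theorem~\ref{almost harmonic} has two hypotheses, and you only verify~(2). Hypothesis~(1) requires $\mathrm{Area}(\tilde\gamma_j(\cdot,t_j))>E(\tilde\gamma_j(\cdot,t_j))-1/j$, and your choice of $t_j$ as an \emph{energy} maximizer does not give this; nor is ``tension field $\to 0$ in $L^2$'' the condition actually used. The paper instead picks $t_j$ to maximize \emph{area} and invokes Proposition~\ref{A=E} ($W_A=W$) to obtain the sandwich
\[
W\le \mathrm{Area}(\tilde\gamma_j(\cdot,t_j))\le E(\tilde\gamma_j(\cdot,t_j))\le E(\gamma_j(\cdot,t_j))\le \max_s E(\gamma_j(\cdot,s))<W+\tfrac{1}{2j},
\]
which simultaneously gives condition~(1), condition~(2) via Theorem~\ref{harmonicreplacem} (since $E(\gamma_j(\cdot,t_j))-E(\tilde\gamma_j(\cdot,t_j))\to 0$), and the $W^{1,2}$-closeness of $\gamma_j$ and $\tilde\gamma_j$ at $t_j$. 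Your description of the pull-tight as ``a fixed finite number $N$'' of replacement steps is also not how Theorem~\ref{harmonicreplacem} is stated; you should invoke it as a black box. Finally, your detour through Proposition~\ref{for unstable} to upgrade bubble convergence to $d_B$-convergence is correct but unnecessary: the data furnished by Definition~\ref{bubble convergence} (compact sets away from the singular points, dilations $D_{i,j}$, and the energy identity forcing neck energy to vanish) directly supply the disjoint domains and dilations required by Definition~\ref{bubble norm}, which is the route the paper takes.
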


	\section{Unstable Lemma}

The main focus of the section is lemma $\ref{unstable}$: proving the energy is concave for maps that are sufficiently close to a finite collection of harmonic spheres in bubble tree sense. We first consider the simplest example, for a given map $u\in W^{1,2}(S^2,M)$, and $X\in C^\infty(S^2,\mathbb{R}^N)$ with
\[\delta^2E(u)(X,X)<0.\]
By the form of second variation of energy (see (\ref{2.29})), clearly if $\epsilon>0$ is sufficiently small, then for any $\gamma\in W^{1,2}(S^2,M)$ with $\|\gamma-u\|_{W^{1,2}}<\epsilon$, we have
\[\delta^2E(\gamma)(X,X)<0.\]
Now we consider the general case, given a finite collection of harmonic spheres $\{u_i\}_{i=0}^n$ with $\sum_{i=0}^n \text{Index}(u_i)=k>0$. 
Index assumption implies there are $k$ correspongding vector fields $\{X_l\}_{l=1}^k$, $X_l\in C^{\infty}(S^2,\mathbb{R}^N)$, positive constant $c_l>0$ for each $l$, and the corresponding harmonic spheres $v_l\in\{u_i\}_{i=0}^n$, such that
\[\delta^2 E(v_l)(X_l,X_l)=-c_l<0.\]
Now the goal is to choose $\epsilon>0$ so that for any $\gamma\in W^{1,2}(S^2,M)$ with $d_B(\gamma,\{u_i\}_{i=0}^n)<\epsilon$, we can construct $\{\tilde{X}_l\}_{l=1}^k$, $\tilde{X}_l\in C^\infty(S^2,\mathbb{R}^N)$, then the variation $\gamma_s=\Pi\circ\big(\gamma+s\tilde{X}_l)$ satisfies
\[\frac{d^2}{ds^2}\at[\Big]{s=0}\int_{S^2}|\nabla\gamma_s|^2<\frac{1}{2}\delta^2E(v_l)(X_l,X_l),\]
for each $l$.

The proof of lemma \ref{unstable} is long and detailed but the idea behind it is simple. It can be roughly spoken as the following: if $d_B(\gamma,\{u_i\}_{i=0}^n)<\epsilon$ for some $\gamma\in W^{1,2}(S^2,M)$, since $v_l\in\{u_i\}_{i=0}^n$, there exist $\Omega_l\subset S^2$ and $D_l\in PSL(2,\mathbb{C})$, so that 
\[\int_{\Omega_l}|\nabla\gamma-\nabla (v_l\circ D_l)|^2<\epsilon^2,\]
and 
\begin{equation}\label{6.1}
\int_{S^2\setminus\Omega_l}|\nabla(v_l\circ D_l)|^2<\epsilon^2,
\end{equation}
see definition (\ref{bubble norm}). If $\epsilon$ is sufficiently small the following term is small
\[\frac{d^2}{ds^2}\at[\Big]{s=0}\int_{\Omega_l}|\nabla\Pi\circ(v_l\circ D_l+sX_l\circ D_l)|^2-\frac{d^2}{ds^2}\at[\Big]{s=0}\int_{\Omega_l}|\nabla\Pi\circ(\gamma+sX_l\circ D_l)|^2.\]
By choosing a suitable cutoff function $\eta_l$ we can make the following term small
\[\frac{d^2}{ds^2}\at[\Big]{s=0}\int_{S^2}|\nabla\Pi\circ(\gamma+s\eta_lX_l\circ D_l)|^2-\frac{d^2}{ds^2}\at[\Big]{s=0}\int_{\Omega_l}|\nabla\Pi\circ(\gamma+sX_l\circ D_l)|^2.\]
Let $\tilde{X}_l=\eta_lX_l\circ D_l$ and $\gamma_s=\Pi\circ\big(\gamma+s\tilde{X}_l)$, observe that
\begin{equation}\label{6.2}
\begin{split}
\delta^2E(v_l)(X,X)=&\frac{d^2}{ds^2}\at[\Big]{s=0}\int_{\Omega_l}|\nabla\Pi\circ(v_l\circ D_l+sX\circ D_l)|^2\\
&+\frac{d^2}{ds^2}\at[\Big]{s=0}\int_{S^2\setminus\Omega_l}|\nabla\Pi\circ(v_l\circ D_l+sX\circ D_l)|^2.
\end{split}    
\end{equation}
The first term of the right hand side of (\ref{6.2}) is close to 
$\frac{d^2}{ds^2}\at[\Big]{s=0}\int_{S^2}|\nabla\gamma_s|^2$, and the second term is small because of (\ref{6.1}). We have the desired inequality \[\frac{d^2}{ds^2}\at[\Big]{s=0}\int_{S^2}|\nabla\gamma_s|^2<\frac{1}{2}\delta^2E(v_l)(X_l,X_l).\]
We now state and prove the unstable lemma and specify how to choose $\epsilon>0$ and $\eta_l.$

\begin{lemma}[Unstable lemma]\label{unstable} Let $M$ be a closed manifold of dimension at least three, isometrically embedded in $\mathbb{R}^N$. Given a collection of finitely many harmonic spheres $\{u_i\}_{i=0}^n$ with $\sum_{i=0}^n Index(u_i)=k$. There exist $1>c_0>0$ and $\epsilon>0$, so that if $d_B(\gamma,\{u_i\}_{i=0}^n)<\epsilon$ for $\gamma\in W^{1,2}(S^2,M)$, then we can construct vector fields $\{\tilde{X}_l\}_{l=1}^k$, $\tilde{X}_l\in C^{\infty}(S^2,\mathbb{R}^N)$ for each $l$, define the variation $\gamma_s$ of $\gamma$ as 
\[\gamma_s=\Pi\circ\big(\gamma+\sum_{l=1}^k s_l\tilde{X}_l\big),\quad s=(s_1,...,s_k)\in\bar{B}^k,\]
and let $E_{\gamma}(s):=E(\gamma_s),$ so that the following hold:
\begin{enumerate}
    \item $E_\gamma(s)$ has a unique maximum at $m_\gamma\in B^k_\frac{c_0}{\sqrt{10}}(0).$
    \item The map $\gamma\mapsto m_\gamma$ is continuous.
\item $\forall s\in \bar{B}^k$ we have
		\begin{equation}\label{m}
		-\frac{1}{c_0}Id\leq D^2E_\gamma(s)\leq-c_0Id,\
		\end{equation}
		and
		\begin{equation}\label{mm}
		E_\gamma(m_\gamma)-\frac{1}{2c_0}|m_\gamma-s|^2\leq E_\gamma(s)\leq E(m_\gamma)-\frac{c_0}{2}|m_\gamma-s|^2.
		\end{equation}	
\end{enumerate}
	\end{lemma}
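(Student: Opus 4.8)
The plan is to reduce the multi-vector-field statement to a uniform perturbation of the second variation, and then to verify the three conclusions by a quantitative version of the heuristic sketched just before the lemma. First I would set up notation: for each index $l$ choose $v_l\in\{u_i\}_{i=0}^n$ and $X_l\in C^\infty(S^2,\mathbb{R}^N)$ with $\delta^2E(v_l)(X_l,X_l)=-c_l<0$, and normalize (rescale $X_l$) so that all the $c_l$ are comparable; set $c:=\min_l c_l>0$. Using the definition of $d_B$, for $\gamma$ with $d_B(\gamma,\{u_i\}_{i=0}^n)<\epsilon$ there are conformal dilations $D_i$ and pairwise disjoint domains $\Omega_i$ realizing (\ref{3.11})--(\ref{3.13}); for each $l$ let $D_l,\Omega_l$ be those attached to $v_l$, so $\int_{\Omega_l}|\nabla\gamma-\nabla(v_l\circ D_l)|^2<\epsilon^2$ and $\int_{S^2\setminus\Omega_l}|\nabla(v_l\circ D_l)|^2<\epsilon^2$.

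The core estimate is to define $\tilde X_l:=\eta_l\,(X_l\circ D_l)$ for a cutoff $\eta_l$ supported slightly inside $\Omega_l$ and equal to $1$ on the region carrying most of the energy of $v_l\circ D_l$, and to show
\[
\Big|\frac{d^2}{ds^2}\Big|_{s=0}E(\Pi\circ(\gamma+s\tilde X_l))-\delta^2E(v_l)(X_l,X_l)\Big|<\tfrac14 c .
\]
This is done by the triangle inequality through three comparisons: (i) replacing $\gamma$ by $v_l\circ D_l$ on $\Omega_l$, controlled by the $W^{1,2}$ closeness $\int_{\Omega_l}|\nabla\gamma-\nabla(v_l\circ D_l)|^2<\epsilon^2$ together with the fact (noted after (\ref{2.29})) that $X\mapsto\delta^2E(\cdot)(X,X)$ is $\Psi$-continuous in $W^{1,2}$; (ii) dropping the cutoff, i.e. comparing the integral with $\tilde X_l$ against the integral with $X_l\circ D_l$ on $\Omega_l$, which is controlled because $\eta_l$ differs from $1$ only where $v_l\circ D_l$ (hence $\gamma$, up to $\epsilon$) has small energy; here one uses the explicit form of (\ref{2.29}), bounding each term by $\int|\nabla\gamma|^2$, $\int|\nabla X_l|^2$ on the transition annulus, and conformal invariance of $\int|\nabla(X_l\circ D_l)|^2 = \int|\nabla X_l|^2$ over the corresponding preimage; (iii) using the conformal invariance of the Dirichlet energy and the decomposition (\ref{6.2}) to identify $\delta^2E(v_l\circ D_l)(X_l\circ D_l,X_l\circ D_l)=\delta^2E(v_l)(X_l,X_l)$ with an error coming from the tail $\int_{S^2\setminus\Omega_l}|\nabla(v_l\circ D_l)|^2<\epsilon^2$. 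Choosing $\epsilon$ small (depending on $\{u_i\}$, the $X_l$, and $c$) makes each error $<c/12$.

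Once the diagonal entries of the Hessian of $E_\gamma$ at $s=0$ are pinned near $-c_l$, I would note the off-diagonal terms $\partial_{s_k}\partial_{s_l}E_\gamma(0)$ for $k\neq l$ are $O(\epsilon)$ because $\tilde X_k,\tilde X_l$ have disjoint supports ($\Omega_k\cap\Omega_l=\emptyset$) so the only cross terms in (\ref{2.229}) involving both are those paired against $\nabla\gamma$ on the overlap of supports, which is empty, leaving only lower-order contributions one bounds crudely; hence $D^2E_\gamma(0)$ is negative definite with spectrum in $[-\tfrac1{c_0},-c_0]$ for a suitable $c_0\in(0,1)$. To upgrade this from $s=0$ to all $s\in\bar B^k$, I would control $E_\gamma\in C^2(\bar B^k)$ with bounds uniform in $\gamma$ (the integrand in (\ref{2.229}) depends on $\gamma+\sum s_l\tilde X_l$ through $\Pi$ and its derivatives on the compact tubular neighborhood, and $\|\tilde X_l\|_{C^1}$ is bounded), and take $\epsilon$ smaller still so that $\|D^2E_\gamma(s)-D^2E_\gamma(0)\|$ stays $<$ half the spectral gap; since $\bar B^k$ is the \emph{unit} ball this is a fixed-size perturbation. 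Then (\ref{m}) holds, strict concavity gives a unique interior maximum $m_\gamma$, and integrating the Hessian bound along the segment from $m_\gamma$ to $s$ yields the quadratic sandwich (\ref{mm}); that $m_\gamma\in B^k_{c_0/\sqrt{10}}(0)$ follows because $\nabla E_\gamma(0)=O(\epsilon)$ (the first variation terms in (\ref{2.229}) against $\tilde X_l$ are controlled, using that $\gamma$ is $W^{1,2}$-close to $v_l\circ D_l$ on $\Omega_l$ where $v_l$ is harmonic so its first variation nearly vanishes) combined with the lower Hessian bound $|\nabla E_\gamma(0)|\ge c_0|m_\gamma|$.

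Continuity of $\gamma\mapsto m_\gamma$ is then the implicit function theorem: $m_\gamma$ solves $\nabla E_\gamma(m)=0$, the Jacobian $D^2E_\gamma$ is invertible uniformly, and $(\gamma,s)\mapsto\nabla E_\gamma(s)$ is continuous in $\gamma$ for the $W^{1,2}$ topology — this last point is exactly why the variation was defined via $\Pi\circ(\gamma+\cdots)$ rather than the exponential map, as emphasized in the introduction, since then $E_\gamma(s)$ and its $s$-derivatives depend on $\gamma$ only through $\gamma+\sum s_l\tilde X_l$ composed with fixed smooth maps. The main obstacle I expect is comparison (ii), the removal of the cutoff: one must choose $\eta_l$ adapted to the conformal dilation $D_l$ (its transition region is a thin annulus near the ``neck'', whose location depends on $\gamma$ through $D_l$) and show the second-variation error on that annulus is small using only the smallness of $\int_{S^2\setminus\Omega_l}|\nabla(v_l\circ D_l)|^2$ and the conformal invariance of the Dirichlet integral of $X_l$; keeping all constants independent of $\gamma$ while $D_l$ ranges over a non-compact family is the delicate bookkeeping here.
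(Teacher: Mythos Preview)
Your overall strategy matches the paper's, and your decomposition into comparisons (i)--(iii) is exactly the right skeleton. There is, however, a genuine gap in the passage from $s=0$ to all of $\bar B^k$: shrinking $\epsilon$ does \emph{not} make $\|D^2E_\gamma(s)-D^2E_\gamma(0)\|$ small, since $\epsilon$ governs only the $W^{1,2}$ distance from $\gamma$ to $\{u_i\}$, while the oscillation of $D^2E_\gamma$ over the unit ball is controlled by a $C^3$ bound on $E_\gamma$ that depends on $\|\tilde X_l\|_{C^1}$ and the geometry of $M$---a fixed constant insensitive to $\epsilon$. The paper's repair is a rescaling: having secured $-\tfrac65 c_l<\partial_s^2E_\gamma|_{s=0}<-\tfrac45 c_l$, continuity in $s$ extends this to a small interval $[-\kappa_l,\kappa_l]$ (with $\kappa_l$ determined by the fixed data), and one then replaces $\tilde X_l$ by $a(\kappa_l)\tilde X_l$. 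This multiplies $D^2E_\gamma$ by $a^2$ and $D^3E_\gamma$ by $a^3$, so on the rescaled unit interval the Hessian stays in $[-\tfrac32 a^2c_l,-\tfrac12 a^2c_l]$, and $c_0$ is chosen from these rescaled bounds.

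For comparison (ii) the paper's specific device is the Choi--Schoen logarithmic cutoff on an annulus $B_r\setminus B_{r^k}$, giving $\int|\nabla\eta_l|^2\le 2\pi(k-1)/|\log r|$, which is small because $d_B<\epsilon$ forces $r$ small. This is needed because expanding $|\nabla(\eta_l\,X_l\circ D_l)|^2$ produces a term $\sup|X_l|^2\int|\nabla\eta_l|^2$ not controlled by the energy of $\gamma$ or of $X_l\circ D_l$ on the annulus; an ordinary cutoff on a thin annulus has $\int|\nabla\eta|^2\sim 1$ regardless of how thin it is. One minor correction: when two indices $k\ne l$ come from the same harmonic sphere $v_k=v_l$, the supports of $\tilde X_k,\tilde X_l$ coincide, so your disjoint-support argument for the off-diagonal Hessian entries does not apply; there you should instead choose the $X_l$ diagonalizing the index form of $v_l$, so the cross second variation is $O(\epsilon)$ by the same $W^{1,2}$-continuity you use for the diagonal.
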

	\begin{proof}
Index assumption implies there are $k$ correspongding vector fields $\{X_l\}_{l=1}^k$, $X_l\in C^{\infty}(S^2,\mathbb{R}^N)$, positive constants $c_l>0$ for each $l=1,...,k$, and the corresponding harmonic spheres $v_l\in\{u_i\}_{i=0}^n$, such that
\begin{equation}
\delta^2 E(v_l)(X_l,X_l)=-c_l<0. \end{equation}
Let $\xi:=c_l/C>0$, $C$ is a constant which will be chosen later. By (\ref{2.29}), there exists $\delta(\xi)>0$ depending on $\{M,X_l\}$ such that
\begin{equation}\label{6.6}
|\delta^2 E(v_l)(X_l,X_l)-\delta^2 E(\gamma)(X_l,X_l)|<\xi,    
\end{equation}
for all $\gamma$ with $\int_{S^2}|\nabla v_l-\nabla\gamma|^2<\delta(\xi)$. We define $v_{l,s}:=\Pi\circ(v_l+sX_l)$. There exists $\rho>0$ such that for all $p\in S^2$ and $\varrho<\rho$ we have:
		\begin{equation}\label{chat}
		-\xi<\frac{d^2}{ds^2}\at[\Big]{s=0}\int_{B_\varrho(p)}|\nabla v_{l,s}|^2<\xi,\quad\text{and }
		\int_{B_\varrho(p)}|\nabla X_l|^2<\xi.
		\end{equation}
We choose $J\in\mathbb{N}$ so that  
		\begin{equation}\label{bonbon}
		-\frac{1}{\log 1/J}<\xi,
		\end{equation}
		and define $\varepsilon_J$ to be $\min\Big\{\int_{B_{1/J}(p)}|\nabla v_l|^2\Big|\:p\in S^2\Big\},\:i\in\mathbb{N},$
		note that $\varepsilon_J$ is strictly positive.
		We now choose $\epsilon>0$ to be the constant satisfying the following inequality \[\max\{\epsilon^2,(3\epsilon^2+2(n+1)\epsilon E)\}<\min\{\varepsilon_J/2,\xi,\delta(\xi)\},\]
		and consider $\gamma\in W^{1,2}(S^2,M)$ with $d_B(\gamma,\{u_i\}_{i=0}^n)<\epsilon$. Since $v_l\in\{u_i\}_{i=0}^n$, there is $\Omega_l\in S^2$ and a conformal dilation $D_l:S^2\to S^2$ such that
		\begin{equation}
		\int_{\Omega_{l}}|\nabla\gamma-\nabla (v_l\circ D_{l})|^2<\epsilon^2,
		\end{equation}
		and
		\begin{equation}\label{coucon}
		\int_{S^2\setminus\Omega_{l}}|\nabla(v_l\circ D_{l})|^2<\epsilon^2.
		\end{equation}
		Moreover, we can choose $\tilde{\Omega}_l$ with $\Omega_l\subset\tilde{\Omega}_l$ so that 
		\[\int_{\tilde{\Omega}_l\setminus\Omega_l}|\nabla\gamma|^2<3\epsilon^2+2(n+1)\epsilon  E,\] 
		here $E=\sum_{i=0}^nE(u_i)$. Assume that $S^2\setminus (D_l\circ\tilde{\Omega}_l)$ and $S^2\setminus (D_l\circ\Omega_l)$ are geodesic balls which center at some point $p\in S^2$, namely, $S^2\setminus (D_l\circ\Omega_l)=B_r(p)$ and $S^2\setminus (D_l\circ\tilde{\Omega}_l)=B_{r^k}(p)$ for some $1<k\leq 2$. By equation \ref{coucon} we know that \[\int_{B_r(p)}|\nabla v_l|^2<\varepsilon_J/2,\] and it implies that $r$ must be smaller than $1/J$.

		Now we define the following piecewise smooth cutoff function, which was introduced by Choi and Schoen \cite{CHS},  $\eta:[0,\infty)\to[0,1]$:
		\[\eta(x) =\left\{ \begin{array}{rcl}
		0, & \mbox{for}	& x<r^k, \\
		(k+1)-(\log x)/(\log r),   & \mbox{for} & r^k\leq x \leq r,\\
		1, & \mbox{for} & x>r, 
		\end{array}\right.\]
		so that
		\[\frac{d\eta}{dx}(x) =\left\{ \begin{array}{rcl}
		0, & \mbox{for}	& x<r^k, \\
		-1/x(\log r),   & \mbox{for} & r^k\leq x \leq r,\\
		0, & \mbox{for} & x>r, 
		\end{array}\right.\]
		and 
		\[\int_{0}^{2\pi}\int_{r^k}^{r}\Big(\frac{d\eta}{dx}(x)\Big)^2xdxd\theta=-\frac{2\pi(k-1)}{\log r}.\]
		Since we have $r<1/J$, (\ref{bonbon}) implies that
		\[-\frac{2\pi(k-1)}{\log r}<2\pi(k-1)\xi.\]
		Now we define $\eta_l=\eta\circ y_l:S^2\to [0,1]$, where $y_l:S^2\to[0,\infty)$ and $y_l(q)=x$ for $q\in\partial B_x(p)$. $\eta_l$ is compactly supported in $S^2\setminus B_{r^k}(p)$ and has value 1 in $S^2\setminus B_{r}(p)$, then
		\begin{equation}\label{cutoff}
		\int_{S^2}|\nabla\eta_l|^2<2\pi(k-1)\xi.    
		\end{equation}
		Let $\tilde{X}'_l:=(\eta_l\circ D_l)X_l\circ D_l,$ and define $v_{l,s}=\Pi\circ(v_l+sX_l)$,  $\gamma_{s}:=\Pi\circ(\gamma+s\tilde{X}_l')$. Then we have:
		\begin{equation}\label{poison}
		\begin{split}
		\Big|\delta^2E(\gamma)(\tilde{X}'_l,\tilde{X}'_l)-\delta^2E(v_l)(X_l,X_l)\Big|&=\Big|\frac{d^2}{ds^2}\at[\Big]{s=0}\int_{S^2}|\nabla\gamma_s|^2-\frac{d^2}{ds^2}\at[\Big]{s=0}\int_{S^2}|\nabla v_{l,s}|^2\Big|\\
		&<\Big|\frac{d^2}{ds^2}\at[\Big]{s=0}\int_{\Omega_l}|\nabla\gamma_{s}|^2-\frac{d^2}{ds^2}\at[\Big]{s=0}\int_{D_l\circ\Omega_l}|\nabla v_{l,s}|^2\Big|\\
		&+\Big|\frac{d^2}{ds^2}\at[\Big]{s=0}\int_{S^2\setminus(D_l\circ\Omega_l)}|\nabla v_{l,s}|^2\Big|\\
		&+\Big|\frac{d^2}{ds^2}\at[\Big]{s=0}\int_{S^2\setminus\Omega_l}|\nabla\gamma_{s}|^2\Big|.
		\end{split}
		\end{equation}
Since $\int_{\Omega_l}|\nabla v_l-\nabla\gamma|^2<\epsilon^2<\delta(\xi)$, (\ref{6.6}) implies that
\begin{equation}\label{6.13}
		\Big|\frac{d^2}{ds^2}\at[\Big]{s=0}\Big(\int_{\Omega_l}|\nabla\gamma_{s}|^2-|\nabla (v_{l,s}\circ D_l)|^2\Big)\Big|
		  <\xi,
	\end{equation}
Since $S^2\setminus(D_l\circ\Omega_l)=B_r(p)$ by the assumption, the choice of $\epsilon$ implies that $r<\rho$ and equation \ref{chat} implies that
\begin{equation}\label{6.14}
\Big|\frac{d^2}{ds^2}\at[\Big]{s=0}\int_{S^2\setminus(D_l\circ\Omega_l)}|\nabla v_{l,s}|^2\Big|<\xi. 
\end{equation}
Now we consider the last term of equation \ref{poison}, namely: \[\Big|\frac{d^2}{ds^2}\at[\Big]{s=0}\int_{S^2\setminus\Omega_l}|\nabla\gamma_{s}|^2\Big|,\]
by equation (\ref{2.29}) we have:
		\begin{equation}\label{estimate}
		\begin{split}
		\Big|\frac{d^2}{ds^2}\at[\Big]{s=0}\int_{S^2\setminus\Omega_l}|\nabla\gamma_{s}|^2\Big|\leq&\int_{S^2\setminus \Omega_l}|d\Pi_\gamma(\nabla\tilde{X}'_l)|^2+C_1\int_{S^2\setminus\Omega_l}\big(|\nabla\tilde{X}_l'||\nabla\gamma|+|\tilde{X}_l'|^2|\nabla\gamma|^2\big)\\
		\leq&\int_{\tilde{\Omega}_l\setminus\Omega_l}|\nabla\tilde{X}_l'|^2\\
		&+C_1\Big(\int_{\tilde{\Omega}_l\setminus\Omega_l}|\nabla\tilde{X}_l'|^2\Big)^{1/2}\Big(\int_{\tilde{\Omega}_l\setminus\Omega_l}|\nabla\gamma|^2\Big)^{1/2}\\
		&+C_1\sup_{p\in S^2}|X_l(p)|^2\int_{\tilde{\Omega}_l\setminus\Omega_l}|\nabla\gamma|^2,
		\end{split}
		\end{equation}
		here $C_1$ is a constant which depends on $M$ (since $\text{Hess}\Pi_\gamma(\cdot,\cdot)$ is bounded by the second fundamental form of $M$ \cite[Appendix 2.12]{LS} and $\nabla\text{Hess}\Pi$ is bounded by curvature of $M$). Then 
		\begin{equation}\label{26}
		\begin{split}
		\int_{\tilde{\Omega}_l\setminus\Omega_l}|\nabla\tilde{X}_l'|^2=&\int_{B_{r}(p)\setminus B_{r^k}(p)}|\nabla(\eta_l X_l)|^2\\
		=&\int_{B_{r}(p)\setminus B_{r^k}(p)}|(\nabla\eta_l)X_l+\eta_l\nabla X_l|^2\\
		\leq&\int_{B_{r}(p)\setminus B_{r^k}(p)}|\nabla X_l|^2\\
		&+2\sup_{p\in S^2}|X_l(p)|^2\Big(\int_{B_{r}(p)\setminus B_{r^k}(p)}|\nabla X_l|^2\Big)^{1/2}\Big(\int_{B_{r}(p)\setminus B_{r^k}(p)}|\nabla\eta_l|^2\Big)^{1/2}\\
		&+\sup_{p\in S^2}|X_l(p)|^2\int_{B_{r}(p)\setminus B_{r^k}(p)}|\nabla\eta_l|^2\\
		<&C_2\xi,
		\end{split}
		\end{equation}
		for some constant $C_2(M,X_l)$, the last inequality follows from equation (\ref{cutoff}) and (\ref{chat}). By  (\ref{26}) we can bound  (\ref{estimate}) by: 
		\begin{equation}\label{6.17}
		\Big|\frac{d^2}{ds^2}\at[\Big]{s=0}\int_{S^2\setminus\Omega_l}|\nabla\gamma_{s}|^2\Big|<C_2\xi+C_1\sqrt{C_2}\xi+C_1\sup_{p\in S^2}|X_l(p)|^2\xi<C_3\xi.
		\end{equation}
Finally, combining the inequality (\ref{6.13}), (\ref{6.14}), and (\ref{6.17}) we have that (\ref{poison}) is bounded by
		\begin{equation}
		\Big|\delta^2E(\gamma)(\tilde{X}'_l,\tilde{X}'_l)-\delta^2E(v_l)(X_l,X_l)\Big|<\xi+\xi+C_3\xi.
		\end{equation}
		Since $\xi=c_l/C$, we now pick $C$ to be a constant strictly larger than $5(C_3+2)$, then we have
		\[-\frac{6}{5}c_l<\frac{d^2}{ds^2}\at[\Big]{s=0}E_{\gamma}(s)<-\frac{4}{5}c_l,\]
		since $\frac{d^2}{ds^2}\at[\Big]{s=t}E_{\gamma}(s)$ 
		is continuous with respect to $t$, there exists $\kappa_l(M,X_l,u_l,\epsilon)>0$ such that
		\[-\frac{6}{5}c_l<\frac{d^2}{ds^2}\at[\Big]{s=t}E_{\gamma}(s)<-\frac{4}{5}c_l,\quad\text{for all }t\in[-\kappa_l,\kappa_l].\]
		We can choose a constant $a(\kappa_l)>0$, let $\tilde{X}_l:=a(\kappa_l)(\eta_l\circ D_l)X_l\circ D_l,$ and redefine the variation of $\gamma$ to be $\gamma_{s}:=\Pi\circ(\gamma+s\tilde{X}_l)$ so that
		\[-a^2(\kappa_l)\frac{3}{2}c_l<\frac{d^2}{ds^2}\at[\Big]{s=t}E_{\gamma}(s)<-a^2(\kappa_l)\frac{1}{2}c_l,\quad\text{for all }t\in[-1,1].\]
		We finish the proof by choosing a constant $0<c_0<1$ such that $c_0<\min_{l=1,..,k}a^2(\kappa_l)\frac{1}{2}c_l$ and $1/c_0>\min_{l=1,...,k}a^2(\kappa_l)\frac{3}{2}c_l$.
	
	\end{proof}
The vector fields $\{\tilde{X}_l\}_{l=1}^k$ constructed in lemma \ref{unstable} depend on $\gamma$, the following corollary shows that there exists $\delta_\gamma>0$ such that for all maps $\sigma$ with $\|\sigma-\gamma\|_{W^{1,2}}<\delta_\gamma$. The variation of $\sigma$ with respect to vector fields $\{\tilde{X}_l\}_{l=1}^k$ still satisfies (\ref{m}) and (\ref{mm}) in lemma \ref{unstable}.
	
	\begin{corollary}\label{forrunstable}
Let $M$ be a closed manifold of dimension at least three, isometrically embedded in $\mathbb{R}^N$. Given a collection of finitely many harmonic spheres $\{u_i\}_{i=0}^n$ with $\sum_{i=0}^n Index(u_i)=k$. Let $1>c_0>0$ and $\epsilon>0$ be given as lemma \ref{unstable}. For a map $\gamma\in W^{1,2}(S^2,M)$ with $d_B(\gamma,\{u_i\}_{i=0}^n)<\epsilon$, let $\{\tilde{X}_l\}_{l=1}^k$ be vector fields given as lemma \ref{unstable}.
For $\sigma\in W^{1,2}(S^2,M)$, we define \[\sigma_s:=\Pi\circ(\sigma+\sum_{l=1}^ks_l\tilde{X}_l)\quad\text{for }s=(s_1,...,s_k)\in\bar{B}^k,\] and let $E_\sigma(s):=E(\sigma_s)$, $m_\sigma$ be the maximum of $E_\sigma(s)$.
There exists $\delta_\gamma>0$, such that for $\sigma\in W^{1,2}(S^2,M)$ satisfying
\[\int_{S^2}|\nabla\gamma-\nabla\sigma|^2<\delta_\gamma,\]
the following properties hold
\begin{enumerate}
    \item $E_\sigma(s)$ has a unique maximum at $m_\sigma\in B^k_\frac{c_0}{\sqrt{10}}(0).$
    \item The map $\sigma\mapsto m_\sigma$ is continuous.
\item $\forall s\in \bar{B}^k$ we have
		\begin{equation}
		-\frac{1}{c_0}Id\leq D^2E_\sigma(s)\leq-c_0Id,\
		\end{equation}
		and
		\begin{equation}
		E_\sigma(m_\sigma)-\frac{1}{2c_0}|m_\sigma-s|^2\leq E_\sigma(s)\leq E(m_\sigma)-\frac{c_0}{2}|m_\sigma-s|^2.
		\end{equation}	
\end{enumerate}
	\end{corollary}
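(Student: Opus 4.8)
The plan is to reduce the whole statement to a single \emph{uniform continuity} fact: the energy $E_\sigma(s)=E\big(\Pi\circ(\sigma+\sum_{l}s_l\tilde X_l)\big)$, together with its first and second $s$-derivatives, depends continuously on $\sigma\in W^{1,2}(S^2,M)$, uniformly for $s$ in the compact ball $\bar B^k$. Concretely, I would first establish that there is a continuous function $\Psi_\gamma\colon[0,\infty)\to[0,\infty)$ with $\Psi_\gamma(0)=0$ — allowed to depend on $\gamma$, which is harmless since $\delta_\gamma$ is permitted to depend on $\gamma$ — such that
\[
\sup_{s\in\bar B^k}\Big(|E_\sigma(s)-E_\gamma(s)|+|\nabla_sE_\sigma(s)-\nabla_sE_\gamma(s)|+\|D^2_sE_\sigma(s)-D^2_sE_\gamma(s)\|\Big)\le\Psi_\gamma\big(\|\sigma-\gamma\|_{W^{1,2}}\big).
\]
Granting this, and using that the conclusions of Lemma \ref{unstable} hold for $\gamma$ itself with the inequalities \eqref{m}, \eqref{mm} valid with room to spare (as is visible from its proof, where $c_0$ is chosen by a strict inequality), the corollary becomes elementary finite-dimensional calculus on $\bar B^k$.

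For the continuity estimate I would differentiate $E_\sigma(s)=\int_{S^2}|\nabla\sigma_s|^2$ in $s$ exactly as in \eqref{2.229}, but with $u$ replaced by $\sigma$ and $X$ by $\sum_l s_l\tilde X_l$: the integrands for $E_\sigma(s)$, $\nabla_sE_\sigma(s)$ and $D^2_sE_\sigma(s)$ are then polynomials of degree at most two in $\nabla\sigma$, with coefficients built from $d\Pi$, $\mathrm{Hess}\,\Pi$ and $\nabla\mathrm{Hess}\,\Pi$ evaluated along $\sigma+\sum_ls_l\tilde X_l$ and multiplied by bounded expressions in $\tilde X_l$, $\nabla\tilde X_l$ and the $s_l$. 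Since $M$ is compact and $\Pi$ is smooth on the tubular neighborhood $M_\delta$, these coefficients are uniformly bounded and uniformly continuous, while $\bar B^k$ and the $\tilde X_l$ are fixed; so the difference of the $\gamma$- and $\sigma$-integrands splits into a part linear in $\nabla\sigma-\nabla\gamma$, controlled by $\|\nabla\sigma-\nabla\gamma\|_{L^2}$ via Cauchy--Schwarz together with the a priori bound $E(\sigma)\le E(\gamma)+2\|\nabla\gamma\|_{L^2}\sqrt{\delta_\gamma}+\delta_\gamma$, and a part carrying a coefficient difference $|d\Pi_{\sigma+v}-d\Pi_{\gamma+v}|\lesssim|\sigma-\gamma|$ tested against a fixed $L^1$ weight assembled from $\nabla\gamma$ and $\nabla\tilde X_l$; this last term is handled by truncating the weight at a level $K$, yielding a bound of the form $K\|\sigma-\gamma\|_{L^2}+(\text{$L^1$-tail of the weight})$ and hence a $\gamma$-dependent modulus $\Psi_\gamma$. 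This is the same computation that underlies the inequality displayed just after \eqref{2.29} and the stability estimates inside the proof of Lemma \ref{unstable}, upgraded to hold uniformly in $s$ and for all three quantities; I expect this bookkeeping, and in particular the treatment of the coefficient-difference terms, to be the main (though routine) obstacle.

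With the estimate in hand I would conclude as follows. From the proof of Lemma \ref{unstable} there is a gap $\mu>0$ such that the eigenvalues of $D^2E_\gamma(s)$ lie strictly inside $[-c_0^{-1},-c_0]$ with distance at least $\mu$ from the endpoints, for every $s\in\bar B^k$; I then choose $\delta_\gamma$ so small that $\Psi_\gamma(\delta_\gamma)<\mu$, that $\Psi_\gamma(\delta_\gamma)/c_0<\tfrac{c_0}{\sqrt{10}}-|m_\gamma|$, and that $c_0\big(1-\tfrac{c_0}{\sqrt{10}}\big)>\Psi_\gamma(\delta_\gamma)$. For $\|\sigma-\gamma\|_{W^{1,2}}<\delta_\gamma$: the operator-norm bound gives $-c_0^{-1}\mathrm{Id}\le D^2E_\sigma(s)\le-c_0\,\mathrm{Id}$ on $\bar B^k$, which is property (3), and in particular $E_\sigma$ is strictly concave there. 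Since $\nabla_sE_\gamma(m_\gamma)=0$ we have $|\nabla_sE_\sigma(m_\gamma)|\le\Psi_\gamma(\delta_\gamma)$, and combining this with $D^2E_\sigma\le-c_0\,\mathrm{Id}$ yields the monotonicity estimate $|\nabla_sE_\sigma(s)|\ge c_0|s-m_\gamma|-\Psi_\gamma(\delta_\gamma)$ together with $\nabla_sE_\sigma(s)\cdot(m_\gamma-s)>0$ on $\partial\bar B^k$; hence $\nabla_sE_\sigma$ has no zero outside $B^k_{\Psi_\gamma(\delta_\gamma)/c_0}(m_\gamma)$ and $E_\sigma$ increases inward along $\partial\bar B^k$, so the maximum of $E_\sigma$ over the compact set $\bar B^k$ is attained at an interior critical point, which is unique by strict concavity and lies in $B^k_{c_0/\sqrt{10}}(0)$ by the choice of $\delta_\gamma$ — this gives (1). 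For (2), $m_\sigma$ is the unique zero of $\nabla_sE_\sigma$ and $D^2E_\sigma(m_\sigma)$ is invertible, so $\sigma\mapsto m_\sigma$ is continuous either by the implicit function theorem applied to $\nabla_sE_\sigma(s)=0$, or directly: along a $W^{1,2}$-convergent sequence $\sigma_n\to\sigma$, the uniform convergence $\nabla_sE_{\sigma_n}\to\nabla_sE_\sigma$ forces any subsequential limit of the $m_{\sigma_n}$ to be a zero of $\nabla_sE_\sigma$, hence equal to $m_\sigma$. Finally, the analogue of \eqref{mm} follows from (3) by a second-order Taylor expansion of $E_\sigma$ about $m_\sigma$, using $\nabla_sE_\sigma(m_\sigma)=0$ and $-c_0^{-1}\mathrm{Id}\le D^2E_\sigma\le-c_0\,\mathrm{Id}$.
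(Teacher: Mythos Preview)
Your approach is correct and is essentially the paper's own argument, carried out more thoroughly. The paper's proof is terser: it argues pointwise in $s$ that $D^2E_\sigma(s)$ is continuous in $\sigma$ (via the explicit formula \eqref{2.29}), obtains a $\delta(s)>0$ for each $s\in\bar B^k$, and then sets $\delta_\gamma=\min_{s\in\bar B^k}\delta(s)$, using compactness of $\bar B^k$ to show this is positive; properties (1), (2) and the Taylor estimate in (3) are left implicit as consequences of the Hessian bound, whereas you establish uniform-in-$s$ continuity of $E_\sigma$, $\nabla_sE_\sigma$, $D^2_sE_\sigma$ directly and derive all three properties explicitly.
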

	\begin{proof}
	By lemma \ref{unstable}, 
	$\forall s\in \bar{B}^k$, we have
		\begin{equation}
		-\frac{1}{c_0}Id\leq D^2E_\gamma(s)\leq-c_0Id.
		\end{equation}
For each $s\in \bar{B}^k$, there exists $\delta(s)>0$ so that for all $\sigma\in W^{1,2}(S^2,M)$ with 
\[\int_{S^2}|\nabla\sigma-\nabla\gamma|^2<\delta(s),\]
we have
\begin{equation}
		-\frac{1}{c_0}Id\leq D^2E_\sigma(s)\leq-c_0Id.
\end{equation}
Let $\delta_\gamma:=\min_{s\in\Bar{B}^k}\delta(s)$
\begin{claim}
 $\delta_\gamma>0$.
\end{claim}
\begin{proof}[Proof of the claim]
If not, there exists a sequence $\{s_i\}_{i\in\mathbb{N}}$ such that $\lim_{i\to\infty}\delta(t_i)=0$. Since $\bar{B}^k$ is compact, we have that $\lim_{i\to\infty}s_i=s'\in\bar{B}^k$,  and $\delta(s')>0$ implies the desired contradiction.
		\end{proof}
Thus $\forall\sigma\in W^{1,2}(S^2,M)$ with 
\[\int_{S^2}|\nabla\sigma-\nabla\gamma|^2<\delta_\gamma,\]
we have
\begin{equation}
		-\frac{1}{c_0}Id\leq D^2E_\sigma(s)\leq-c_0Id,\quad\forall s\in\bar{B}^k.
\end{equation}
	\end{proof}

	\section{Deformation Theorem}
Let $M$ be a closed manifold with dimension at least three, isometrically embedded in $\mathbb{R}^N$. Consider a map $\beta\in\Omega$ representing a nontrivial class in $\pi_3(M)$,  let $W$ be the width associated to the homotopy class $\Omega_\beta$ (see definition \ref{width}, (\ref{44.4})), and given a sequence of sweepouts $\gamma_{j}(\cdot,t)\in\Omega_\beta$ which is minimizing, i.e., \[\lim_{j\to\infty}\max_{t\in[0,1]}E(\gamma_j(\cdot,t))=W.\] 
Moreover, let $K=\Big\{\{[k^1_i]\}_{i=0}^{m_1},...,\{[k^{N_k}_i]\}_{i=0}^{m_{N_k}}\Big\}$ be a finite set of finite collection of equivalent classes of harmonic spheres, so there exist a constant $\epsilon_k>0$ and $j_k\in\mathbb{N}$ such that
\[d_B(\gamma_j(\cdot,t),\{k^l_i\}_{i=0}^{m_l})>\epsilon_k,\quad\forall t\in[0,1],\]
for all $j>j_k$, $l=1,...,N_k$.
	\begin{theorem}[Deformation Theorem]\label{deformation}
		As assumed above, given a collection of finitely many harmonic spheres $\{u_i\}_{i=0}^n$
		with $\sum_{i=0}^n \text{Index}(u_i)=k>1$ and $\sum_{i=0}^nE(u_i)=W$. There exists a sequence of sweepouts $\{\gamma_j'(\cdot,t)\}_{j\in\mathbb{N}}$ such that
		\begin{enumerate}
			\item $\gamma'_j(\cdot,t)$ is homotopic to $\gamma_j(\cdot,t)$,
			\item $\{\gamma'_j(\cdot,t)\}_{j\in\mathbb{N}}$ is a minimizing sequence,
			\item there exists $j_k'\in\mathbb{N}$ such that
			\[d_B(\gamma'_j(\cdot,t),\{k^l_i\}_{i=0}^{m_l})>\epsilon_k\quad\text{for }l=1,...,N_k,\forall t\in[0,1],\]
			for all $j>j_k'$.
			\item 
			there exists $\epsilon_J>0$ and $J\in\mathbb{N}$ such that
			\[d_B(\gamma_j'(\cdot,t),\{u_i\}_{i=0}^n)>\epsilon_J,\quad\forall t\in[0,1],\]
			for all $j>J$.
		\end{enumerate}
	\end{theorem}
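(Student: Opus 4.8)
The strategy is to turn the Morse‑index lower bound $k\ge 2$ into room for ``going around'' a mountain pass that a one‑parameter sweepout cannot avoid. Apply Lemma~\ref{unstable} and Corollary~\ref{forrunstable} to $\{u_i\}_{i=0}^n$: we obtain $c_0\in(0,1)$ and $\epsilon>0$ so that for every $\gamma$ with $d_B(\gamma,\{u_i\}_{i=0}^n)<\epsilon$ there are vector fields $\tilde X_1^\gamma,\dots,\tilde X_k^\gamma$ for which $E_\gamma(s):=E\big(\Pi\circ(\gamma+\sum_l s_l\tilde X_l^\gamma)\big)$ is $c_0$‑concave on $\bar{B}^k$, has a unique maximum $m_\gamma\in B^k_{c_0/\sqrt{10}}(0)$, obeys the two‑sided bounds (\ref{m})--(\ref{mm}), and $\gamma\mapsto m_\gamma$ is continuous on a $W^{1,2}$‑ball. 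Since the $u_i$ are harmonic (hence critical for $E$), the first variation $\delta E(\gamma)(\tilde X_l^\gamma)$ is controlled by $d_B(\gamma,\{u_i\}_{i=0}^n)$; as $\nabla E_\gamma(0)=(\delta E(\gamma)(\tilde X_l^\gamma))_l$ and $E_\gamma$ is $c_0$‑concave, $|m_\gamma|\le c_0^{-1}|\nabla E_\gamma(0)|$ is as small as we wish once $\epsilon$ is small. Fix a small $\mu>0$ and replace $\epsilon$ by $\epsilon_0\le\epsilon$ so that $d_B(\gamma,\{u_i\}_{i=0}^n)<\epsilon_0$ forces both $|m_\gamma|<\mu$ and $|E(\gamma)-W|<C\epsilon_0$ (the latter is immediate from Definition~\ref{bubble norm}, since $\sum_iE(u_i)=W$).

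Now build the deformation. The ``bad'' times $\{t:d_B(\gamma_j(\cdot,t),\{u_i\}_{i=0}^n)<\epsilon_0\}$ form an open set: a witness $(\Omega_i,D_i)$ for $\gamma_j(\cdot,t)$ also witnesses closeness, with a slightly larger constant, for all $\gamma_j(\cdot,t')$ with $t'$ near $t$, so $t\mapsto d_B(\gamma_j(\cdot,t),\{u_i\}_{i=0}^n)$ is upper semicontinuous; hence the closed sets $\{d_B\le\epsilon_0/2\}$ and $\{d_B\ge\epsilon_0\}$ are disjoint, and Urysohn's lemma gives a continuous $\lambda_j:[0,1]\to[0,1]$ equal to $1$ on the first and $0$ on the second. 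Fix $y_0\in\bar{B}^k$ with $|y_0|=4/5$ and set $\rho:=2\mu/c_0$. For $t$ with $d_B(\gamma_j(\cdot,t),\{u_i\}_{i=0}^n)<\epsilon_0$, writing $\gamma=\gamma_j(\cdot,t)$ and $m=m_\gamma$, define $P_\gamma:[0,1]\to\bar{B}^k$ with $P_\gamma(0)=0$ in three segments: (i) move from $0$ along the ray through $m$, away from $m$, until $|s-m|=\rho$; (ii) travel along the sphere $\{|s-m|=\rho\}$ to the point $m+\rho(y_0-m)/|y_0-m|$; (iii) move radially away from $m$ out to $y_0$. Segment (ii) is where $k\ge 2$ is essential: that sphere is $(k-1)$‑dimensional and connected, whereas for $k=1$ it would be two points. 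By (\ref{mm}), $E_\gamma$ is nonincreasing along (i) and (iii) (one moves away from the maximizer), and on (ii) it is at most $E_\gamma(m)-\tfrac{c_0}{2}\rho^2\le E_\gamma(0)+\tfrac{1}{2c_0}\mu^2-\tfrac{2}{c_0}\mu^2<E(\gamma)$; so $E_\gamma\le E(\gamma)$ along all of $P_\gamma$, with equality only at $0$, and $E_\gamma(y_0)\le E_\gamma(m)-\tfrac{c_0}{2}(|y_0|-\mu)^2\le W-\delta_1$ for a definite $\delta_1>0$, once $\epsilon_0,\mu$ are small and $j$ large. Each segment depends continuously on $m$, hence on $t$ (modulo the continuity of $t\mapsto\tilde X_l^{\gamma_j(\cdot,t)}$ addressed below), and $P_\gamma(0)=0$, so we may set
\[\gamma_j'(\cdot,t):=\Pi\circ\Big(\gamma_j(\cdot,t)+\textstyle\sum_{l=1}^k\big(P_{\gamma_j(\cdot,t)}(\lambda_j(t))\big)_l\,\tilde X_l^{\gamma_j(\cdot,t)}\Big),\]
with $P_{\gamma_j(\cdot,t)}(\lambda_j(t))=0$ wherever $\lambda_j(t)=0$.

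It remains to check (1)--(4). (1): the homotopy $(\tau,t)\mapsto\Pi\big(\gamma_j(\cdot,t)+\tau\sum_l(\,\cdot\,)_l\tilde X_l^{\gamma_j(\cdot,t)}\big)$, $\tau\in[0,1]$, joins $\gamma_j$ to $\gamma_j'$ through $\Omega$; at $t=0,1$ the maps are constant, hence $d_B$‑far from $\{u_i\}_{i=0}^n$ (as $W>0$), so $\lambda_j=0$ there and the endpoints are untouched, giving $\gamma_j'\in\Omega_\beta$. (2): $E(\gamma_j'(\cdot,t))=E_{\gamma_j(\cdot,t)}(P_{\gamma_j(\cdot,t)}(\lambda_j(t)))\le E(\gamma_j(\cdot,t))$ for all $t$, so $\max_tE(\gamma_j'(\cdot,t))\le\max_tE(\gamma_j(\cdot,t))\to W$, while $\ge W$ because $\gamma_j'\in\Omega_\beta$; hence $\{\gamma_j'\}$ is minimizing. (4): if $\lambda_j(t)=1$ then $E(\gamma_j'(\cdot,t))<W-\delta_1$, which forces $d_B(\gamma_j'(\cdot,t),\{u_i\}_{i=0}^n)\ge\epsilon_J$, since $d_B<\epsilon_J$ would keep the energy within $C\epsilon_J$ of $W$ by Definition~\ref{bubble norm}; if $\lambda_j(t)<1$ but $|P_{\gamma_j(\cdot,t)}(\lambda_j(t))|$ is small, then $\gamma_j'(\cdot,t)$ is $W^{1,2}$‑close to $\gamma_j(\cdot,t)$, which has $d_B(\cdot,\{u_i\}_{i=0}^n)>\epsilon_0/2$ (or lies in the untouched region), so again $d_B(\gamma_j'(\cdot,t),\{u_i\}_{i=0}^n)\ge\epsilon_J$; the intermediate range, $|s-m|$ bounded below with $\lambda_j(t)<1$, is handled by the energy drop of order $c_0$ that (\ref{mm}) then yields. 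Choosing $\epsilon_J$ below $\epsilon_0/4$, $\delta_1/C$, and a fixed multiple of $c_0$ makes all three cases work for $j$ large. Claim (3) has the same structure, using in addition that each $\{k^l_i\}_{i=0}^{m_l}$ in $K$ has total energy $W$ (so a drop below $W-\delta_1$ separates from it) and the $W^{1,2}$‑stability of $d_B(\cdot,\{k^l_i\}_{i=0}^{m_l})$; one may end with a slightly smaller constant $\epsilon_k'\le\epsilon_k$, which is all the inductive argument of Section~4 needs.

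The real obstacle I have glossed over is that the vector fields $\tilde X_l^\gamma$ of Lemma~\ref{unstable} are built only after choosing a conformal dilation $D_l$ and a domain $\Omega_l$ realizing bubble‑closeness, and these choices a priori jump with $\gamma$, so one owes a proof that $\gamma\mapsto\tilde X_l^\gamma$ (hence $t\mapsto\gamma_j'(\cdot,t)$) can be made continuous. There are two routes: either cover the compact set of relevant times by finitely many intervals on which a fixed tuple of vector fields works — using compactness of $[0,1]$ and the $W^{1,2}$‑stability of Corollary~\ref{forrunstable} — and patch the corresponding deformations with a partition of unity taken fine enough that the blended perturbation still lowers energy; or make a continuous selection of near‑minimizers $\phi_i\in PSL(2,\mathbb{C})$ for $\inf_{\phi}\int_{S^2}|\nabla\gamma-\nabla(\sum_iu_i\circ\phi_i)|^2$ — whose interplay with $d_B$ is exactly Claim~\ref{infw12} and Proposition~\ref{for unstable} — and transport the fixed unstable directions of the $u_i$ by these $\phi_i$. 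Carrying out such a selection, together with the bookkeeping of the nested small constants $\epsilon_0,\mu,\rho,\delta_1,\epsilon_J$ so that every energy and $d_B$ estimate closes up, is where the bulk of the work lies.
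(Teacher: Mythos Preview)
Your strategy is sound and close to the paper's: use the unstable lemma to get a $k$-dimensional concave landscape, exploit $k\ge 2$ to route a one-parameter family around the moving maximum, and feed the resulting energy drop back into $d_B$-separation. The one genuine methodological difference is your explicit three-segment path $P_\gamma$ (radial--spherical--radial) in place of the paper's device: the paper instead runs the gradient flow $\phi_j^t(\cdot,x)$ of $-(1-|s|^2)\nabla E_j^t$ on $\bar B^k$, proves a uniform-time energy drop (Claim~\ref{flow}) for initial points $\kappa$-far from the maximum, and then uses $k\ge 2$ only to choose a continuous homotopy $H_j'(t,\cdot)$ from $0$ to a point that avoids the curve $t\mapsto m_j(t)$. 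Your path is more elementary and avoids the flow entirely; the paper's flow is more robust (monotone in energy, no geometric case analysis). Either works.

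Two places deserve tightening. First, the continuity issue you flag in your last paragraph is exactly where the paper spends its effort, and it takes your first route: cover the bad set $U_{j,\delta/2}$ by finitely many intervals $I^{t_m}$ on which a \emph{fixed} tuple $\{\tilde X_l(t_m)\}$ works (Corollary~\ref{forrunstable}), then set $X_l(t)=\sum_m\xi_m(t)\tilde X_l(t_m)$ via a partition of unity. The point you should not skip is that the blended fields must be re-checked for concavity; the paper does this in \eqref{6.26}, using that at each $t$ at most two $\xi_m$ are nonzero and $\xi_a^2+\xi_b^2\ge 1/2$, which costs a factor of $2$ in $c_0$. Second, your treatment of the transition region in (4) (``$|s-m|$ bounded below gives an energy drop'') is thinner than what is actually needed: the dichotomy ``$|P_\gamma(\lambda)|$ small vs.\ energy drop'' is not quite exhaustive without careful constant-chasing, since along segment~(i) both $|s|$ and $|s-m|$ can be of order $\mu$. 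The paper instead proves a separate claim (Claim~\ref{MMarco}) by contradiction: if some $\gamma_{j,s}(\cdot,t)$ with $t$ in the boundary layer were $d_B$-close to $\{u_i\}$ with energy not exceeding $E(\gamma_j(\cdot,t))+\varepsilon$, a limit would force $\gamma_{j,s}(\cdot,t)$ to \emph{equal} one $u_0$, and then the concavity of $E$ along the unstable directions forces $s=0$, contradicting $t\notin U_{j,\delta/3}$.

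For (3) your energy-drop argument tacitly assumes $\sum_iE(k^l_i)=W$, which is not in the hypotheses. The paper's fix is simpler and hypothesis-free: before anything else, rescale the $X_l(t)$ (and $c$) so that $\gamma_{j,s}$ stays $W^{1,2}$-close enough to $\gamma_j$ that the standing inequality $d_B(\gamma_j(\cdot,t),K)>\epsilon_k$ persists for \emph{all} $s\in\bar B^k$; see \eqref{fork}. This costs only a constant in the eventual energy drop and removes any dependence on the energies of the collections in $K$.
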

	\begin{proof}
Assumption $\sum_{i=0}^n\text{Index}(u_i)=k$ implies there are $k$ correspongding vector fields $\{X_l'\}_{l=1}^k$, $X'_l\in C^{\infty}(S^2,\mathbb{R}^N)$, such that
$\delta^2 E(v_l)(X_l',X_l')<0,$ for $v_l\in\{u_i\}_{i=0}^n$. Let $\epsilon>0$ be given as lemma \ref{unstable}. By proposition \ref{for unstable}, there exists $\delta>0$ such that
$d_{B}(\gamma,\{u_i\}_{i=0}^n)<\epsilon$, if
\[\inf\Big\{\int_{S^2}|\nabla\gamma-\nabla\big(\sum_{i=0}^nu_i\circ\phi_i\big)|^2\Big|\phi_i\in PSL(2,\mathbb{C})\Big\}<\delta,\]
		and
\[d_{V_R}(\gamma,\{u_i\}_{i=0}^n)<\delta,\]
for $\gamma\in W^{1,2}(S^2,M).$ We consider the following sets:
		\[I_{j,\delta/2}:=\Big\{t\in[0,1]\Big|\inf\big\{\int_{S^2}|\nabla\gamma_j(\cdot,t)-\sum_{i=0}^n\nabla(u_i\circ D_i)|^2\Big|D_i\in PSL(2,\mathbb{C})\big\}\leq\delta/2\Big\},\]
		and
		\[I'_{j,\delta/2}:=\Big\{t\in[0,1]\big|d_{V_R}(\gamma_j(\cdot,t),\{u_i\}_{i=0}^n)\leq\delta/2\Big\}.\]
Let $U_{j,\delta/2}:=I_{j,\delta}\cap I'_{j,\delta/2}.$ Proposition \ref{for unstable} implies that
$d_B(\gamma_j(\cdot,t),\{u_i\}_{i=0}^n)<\epsilon$, for all $t\in U_{j,\delta/2}$. We define 
\[E_j^t(s,\{Y_l\}_{l=1}^k):=\int_{S^2}|\nabla(\Pi\circ(\gamma_j(\cdot,t)+\sum_{l=1}^ks_lY_l))|^2,\quad s=(s_1,...,s_k)\in\bar{B}^k.\]
For $t_m\in U_{j,\delta/2}$, by lemma \ref{unstable}, we can construct vector fields  $\{\tilde{X}_l(t_m)\}_{l=1}^k$, and the hessian of 
$E_j^{t_m}(s,\{\tilde{X}_l(t_m)\}_{l=1}^k)$ with respect to $s\in\bar{B}^k$, which we denote by $D^2_sE_j^{t_m}(s,\{\tilde{X}_l(t_m)\}$, satisfies 
		\begin{equation}
		-\frac{1}{c_0}Id\leq D^2_sE_j^{t_m}(s,\{\tilde{X}_l(t_m)\}_{l=1}^k)\leq-c_0Id,\:\forall s\in\bar{B}^k
		\end{equation}
here $c_0$ is a constant given by lemma \ref{unstable}. By corollary \ref{forrunstable} and the continuity of $\gamma_j(\cdot,t)$ in $W^{1,2}(S^2,M)$ with respect to $t$, we know there exists $\delta(t_m)>0$ so that for all $ t\in(t_m-\delta(t_m),t_m+\delta(t_m))\cap U_{j,\delta/2}$ we have
\begin{equation}
-\frac{1}{c_0}Id\leq D^2_sE_j^{t}(s,\{\tilde{X}_l(t_m)\}_{l=1}^k)\leq-c_0Id,\:\forall s\in\bar{B}^k.
\end{equation}
Let $I^{t_m}:=(t_m-\delta(t_m),t_m+\delta(t_m))\cap U_{j,\delta/2}$, since $U_{j,\delta/2}$ is compact we can cover $U_{j,\delta/2}$ by finitely many $I^{t}$, say $I^{t_1},...,I^{t_{N_1}}$. Moreover, after discarding some of the intervals, we can arrange that each $t$ is in at least one closed interval $\Bar{I}^{t_m}$, each $\Bar{I}^{t_m}$ intersects at most two other $\Bar{I}^{t_k}$'s, and the $\Bar{I}^{t_k}$'s intersecting $\Bar{I}^{t_m}$ do not intersect each other. For each $m=1,...,N_1$, choose a smooth function $\xi_m(t):[0,1]\to[0,1]$ which is supported in $\Bar{I}^{t_m}$, and \[\sum_{m=1}^{N_1}\xi_m(t)=1,\:\forall t\in[0,1].\] 
We define $X_l(t)$ to be
\begin{equation}\label{6.226}
X_l(t):=\sum_{m=1}^{N_1}\xi_m(t)\tilde{X}_l(t_m),\quad t\in[0,1],l=1,...,k.    
\end{equation}
Consider $E_j^{t}(s,\{{X}_l(t)\}_{l=1}^k)$, if $X_l(t)=\Tilde{X}_l(t_m)$ for some $t\in U_{j,\delta/2}$, then obviously we have $-\frac{1}{c_0}Id\leq D^2_sE_j^{t}(s,\{{X}_l(t)\}_{l=1}^k)\leq-c_0Id,\:\forall s\in\bar{B}^k.$ If $X_l(t)=\delta_a(t)\Tilde{X}_l(t_a)+\delta_b(t)\Tilde{X}_l(t_b)$, since $\delta_a(t)+\delta_b(t)=1$, we have for all $s\in\Bar{B}^k$
\begin{equation}\label{6.26}
\begin{split}
-\frac{2}{c_0}Id&<D^2_sE_j^{t}(s,\{{X}_l(t)\}_{l=1}^k)\\
    &=D^2_sE_j^{t}(s,\{\delta_a(t)\Tilde{X}_l(t_a)+\delta_b(t)\Tilde{X}_l(t_b)\}_{l=1}^k)\\
    &<\delta^2_a(t)D^2_sE_j^{t}(s,\{\Tilde{X}_l(t_a)\}_{l=1}^k)+\delta^2_b(t)D^2_sE_j^{t}(s,\{\Tilde{X}_l(t_b)\}_{l=1}^k)\\
    &\leq -\frac{c_0}{2}Id.
\end{split}   
\end{equation}
The last inequality follows from $\delta^2_a(t)+\delta^2_b(t)\geq 1/2$
By ($\ref{6.26}$), we can choose $c=c_0/2$ such that $-\frac{1}{c}Id<D^2_sE_j^{t}(s,\{{X}_l(t)\}_{l=1}^k)<-cId,\:\forall s\in\Bar{B}^k.$ Now we define
\[\gamma_{j,s}(\cdot,t):=\Pi\circ\Big(\gamma_j(\cdot,t)+\sum_{l=1}^ks_lX_{l}(t)\Big),\quad s=(s_1,..,.s_k)\in\bar{B}^{k},\]
and let $E_j^t(s):=E(\gamma_{j,s}(\cdot,t)),$ then we have  
 \begin{enumerate}
    \item $E_j^t(s)$ has a unique maximum at $m_j(t)\in B^k_\frac{c}{\sqrt{10}}(0).$
    \item The map $\gamma_j(\cdot,t)\mapsto m_j(t)$ is continuous.
\item $\forall s\in \bar{B}^k$ and $\forall t\in U_{j,\delta/2}$ we have
		\begin{equation}\label{m'}
		-\frac{1}{c}Id\leq D^2E_j^t(s)\leq-c Id,
		\end{equation}
		and
		\begin{equation}\label{mm'}
		E_j^t(m_j(t))-\frac{1}{2c}|m_j(t)-s|^2\leq E^t_j(s)\leq E_j^t(m_j(t))-\frac{c}{2}|m_j(t)-s|^2.
		\end{equation}	
\end{enumerate}
Recall $\forall \{[k^l_i]\}_{i=0}^{m_l}\in K$ we have:
$d_B(\gamma_j(\cdot,t),\{k^l_i\}_{i=0}^{m_l})>\epsilon_l$ for $l=1,...,N_k,\forall t\in[0,1].$
		for all $j>j_k$. Without loss of generality (by rescaling $\{X_l(t)\}_{l=1}^k$ and $c$), we can assume that there exists $j_k'\in\mathbb{N}$ such that
		\begin{equation}\label{fork}
		d_B(\gamma_{j,s}(\cdot,t),\{k^l_i\}_{i=0}^{m_l})>\epsilon_l\quad\text{for }l=1,...,N_k,\forall s\in\bar{B}^k,\forall t\in[0,1],
		\end{equation}
		for all $j>j'_k$.
		\begin{claim}\label{MMarco}
			For $t$ belongs to the closure of $U_{j,\delta/2}\setminus U_{j,\delta/3}$, there exists a constant $\varepsilon=\varepsilon(\delta,\{u_i\}_{i=0}^n,\{X_l\}_{l=1}^k)>0$, so that for all $j\in\mathbb{N}$, $s\in\bar{B}^k$ satisfies
			\[E(\gamma_{j,s}(\cdot,t))\leq E(\gamma_j(\cdot,t))+\varepsilon,\]
			then 
			\[d_B(\gamma_{j,s}(\cdot,t),\{u_i\}_{i=0}^n)>2\varepsilon.\]
		\end{claim}
		\begin{proof}
			We argue by contradiction. Assume that there are sequences $s'_i\in\bar{B}^k$ and $t'_i\in I$ satisfying
			\begin{equation}\label{Marco}
			E(\gamma_{j,s'_i}(\cdot,t'_i))\leq E(\gamma_j(\cdot,t'_i))+1/i,
			\end{equation}
			and         
			\[d_B(\gamma_{j,s'_i}(\cdot,t'_i),\{u_i\}_{i=0}^n)\leq 2/i.\]
			Let $s_i'\to s=(s_1,...,s_k)\in\bar{B}^k$ and $t_i'\to t$. Hence we have $d_B(\gamma_{j,s}(\cdot,t),\{u_i\}_{i=0}^n)=0$,
			which implies that there are pairwise disjoint domains $\{\Omega_i\}_{i=0}^n$, $\cup_{i=0}^n\Omega_i\subset S^2$, and conformal dilations $\{D_i\}_{i=0}^n$ such that 
			$\sum_{i=0}^n\int_{S^2\setminus\Omega_i}|\nabla(u_i\circ D_i)|^2=0.$ For nontrivial harmonic sphere $u_i:S^2\to M$, $\int_{S^2\setminus\Omega_i}|\nabla(u_i\circ D_i)|^2=0$ is only possible when $\Omega_i=S^2$. That implies $\{u_i\}_{i=0}^n$ only contains one harmonic sphere, say $u_0$. The index assumption $\sum_{i=0}^n\text{Index}(u_i)=k$ now becomes $\text{Index}(u_0)=k$, and $\delta^2E(u_0)(X'_l,X_l')<0$ for $l=1,...,k$. Moreover, the $X_l(t)$ we constructed in (\ref{6.226}) is simply $X_l'$.
			By $\gamma_{j,s}(\cdot,t)\in C^0(S^2,M)\cap W^{1,2}(S^2,M)$, we have
			\begin{align*}
		   u_0(x)&= \gamma_{j,s}(x,t)\\
		   &=\Pi\circ\big(\gamma_j(x,t)+\sum_{l=1}^ks_lX_l'(x)\big),
			\end{align*}
 the equality holds for all $x\in S^2$. Moreover, because $\Pi$ is the nearest point projection defined on a tubular neighborhood of $M$ in $\mathbb{R}^N$, which implies the following
		\[\sum_{l=1}^k s_lX_l'(x)=u_0(x)-\gamma_j(x,t)+\nu(x),\quad\nu(x)\in T_{u_0(x)}^{\bot}M.\]
Let $Y(x):=\gamma_j(x,t)-u_0(x)$, then 
\[d\Pi_{u_0}(\sum_{l=1}^k s_l\nabla X_l')=d\Pi_{u_0}(-\nabla Y).\]
So we have
\begin{equation}\label{6.32}
\delta^2E(u_0)(Y,Y)<0,    
\end{equation}
if $s\neq 0$. (\ref{6.32}) implies that 
\begin{equation}
    E(\gamma_j(\cdot,t))=E(\Pi\circ(u_0+Y))\leq E(u_0),
\end{equation}
with the equality holds if and only if $s=0$. On the other hand, by assumption (\ref{Marco})
\[E(u_0)=E(\gamma_{j,s}(\cdot,t))\leq E(\gamma_j(\cdot,t)),\]
which forces $s$ to be $0$. Thus $\gamma_j(\cdot,t)=u_0$, which contradicts that $t$ belongs to the closure of $U_{j,\delta/2}\setminus U_{j,\delta/3}$.
\end{proof}
Now we consider the one-parameter flow
\begin{align*}
\{\phi_j^t(\cdot,x)\}_{x\geq 0}&\in\text{Diff}(\bar{B}^k)\\
 \phi_j^t(\cdot,\cdot):\bar{B}^k&\times [0,\infty)\to \bar{B}^k,   
\end{align*}
generated by the vector field:
\begin{equation}
s\mapsto -(1-|s|^2)\nabla E_j^t(s),\:s\in\bar{B}^k. 
\end{equation}
\begin{claim}\label{flow}		
For all $\kappa<\frac{1}{4}$, there is $T_j$ depending on $\big\{\{u_i\}_{i=0}^n,\{X_l\}_{l=1}^k,\kappa,\epsilon\big\}$ so that for any $t\in U_{j,\delta/2}$, and $v\in \bar{B}^k$ with $|v-m_j(t)|\geq\kappa$ we have:
\begin{equation}\label{decrease flow}
			E_j^t(\phi_j^t(v,T_j))<E_j^t(0)-\frac{c}{10}.
			\end{equation}
		\end{claim}
		\begin{proof}
By $m_j(t)\in B^k_\frac{c}{\sqrt{10}}(0)$ and  (\ref{mm'}) we know that for $\gamma_j(\cdot,t)$, $t\in U_{j,\delta/2}$, we have:
\begin{equation}\label{c}
\sup_{s\in\bar{B}^k}E_j^t(s)=E_j^t(m_j(t))\leq E_j^t(0)+\frac{c}{20}.
\end{equation}
So, to prove (\ref{decrease flow}), it suffices to show the existence of $T_j$ such that
\[|v-m_j(t)|\geq\kappa\implies E_j^t(\phi_j^t(v,T_j))<\sup_{s\in\bar{B}^k}E_j^t(s)-\frac{c}{5}.\]
We argue by contradiction and assume that there exists a constant  $\frac{1}{4}>\kappa>0$, a sequence $\{t_l\}_{l\in\mathbb{N}}\subset U_{j,\delta/2}$, and $\{s_l\}_{l\in\mathbb{N}}\subset\bar{B}^k$ with $|s_l-m_j(t_l)|\geq\kappa$ such that
			\begin{equation}\label{abc}
			E_j^{t_l}(\phi_j^{t_l}(s_l,l))\geq E_j^{t_l}(0)-\frac{c}{10}.
			\end{equation}
Combining (\ref{abc}) with (\ref{c}) we have $E_j^{t_l}(\phi_j^{t_l}(s_l,l))\geq E_j^{t_l}(m_j(t_l))-\frac{c}{5}$. Since $\phi_j^t(\cdot,\cdot)$ is an energy decreasing flow, we have
\[E_j^{t_l}(\phi_j^{t_l}(s_l,x))\geq E_j^{t_l}(\phi_j^{t_l}(s_l,l))\geq E_j^{t_l}(m_j(t_l))-\frac{c}{5},\:\forall 0\leq x\leq l.\]
Since both $U_{j,\delta/2}$ and $\bar{B}^k$ are compact, we obtain subsequential limits $t\in U_{j,\delta/2}$ and $s\in\bar{B}^k$ with
\begin{equation}\label{88}
E_j^t(\phi_j^t(s,x))\geq \sup_{|v|\leq 1}E_j^t(v)-\frac{c_0}{5},\quad\forall x\geq 0.
			\end{equation}
			Since $\gamma_j(\cdot,t)\mapsto m_j(t)$ is a continuous map, $|s_l-m_j(t_l)|\geq\kappa$ implies $|s-m_j(t)|\geq\kappa$ . Thus we have $\lim_{x\to\infty}|\phi_j^t(s,x)|=1$ and thus we deduce from the equation (\ref{88}) that
			\begin{equation}
			\sup_{|v|=1}E_j^t(v)\geq\sup_{|v|\leq 1}E_j^t(v)-\frac{c_0}{5}.
			\end{equation}
			On the other hand,  $m_j(t)\in B^k_\frac{c}{\sqrt{10}}(0)$ implies $|v-m_j(t)|>2/3$ for all $v\in\bar{B}^k$ with $|v|=1$. Hence, by equation \ref{mm'} we have
			\[\sup_{|v|=1}E_j^t(v)\leq\sup_{|v|\leq 1}E_j^t(v)-\frac{c_0}{2}\Big(\frac{2}{3}\Big)^2<\sup_{|v|\leq 1}E_j^t(v)-\frac{c_0}{5},\]
			which gives us the desired contradiction.
		\end{proof} 
		We define a continuous homotopy: 		
		$$H'_{j}:U_{j,\delta/2}\times[0,1]\longrightarrow B^k_{1/2^j}(0),$$
		so that
		$$H'_j(t,0)=0,\:\text{and}\:\inf_{t\in U_{j,\delta/2}}|H'_j(t,1)-m_j(t)|\geq\kappa_j>0.$$
		We are able to define $H_j'$ due to the assumption $\sum_i \text{Index}(u_i)=k\geq 2$. So we can choose a continuous path in $B^k_{1/2^j}(0)$ away from the curve of $m_j(t),\:t\in U_{j,\delta/2}.$ By claim \ref{flow}, there exists $T_j$ for $t\in U_{j,\delta/2}$ such that:
		$$E^t_j\big(\phi_j^t(H_j'(t,1),T_j)\big)<E_j^t(0)-\frac{c_0}{10}.$$
		Let $c_j:[0,1]\longrightarrow[0,1]$ be a cutoff function which is supported in $U_{j,\delta/2}$, and has value one in $U_{j,\delta/3}$, value zero in $[0,1]\setminus U_{j,\delta/2}.$
		Define:
		$$H_j(t,x)=H'_j(t,c_j(t)x),$$
		and
		\[H_j(t,x)=0\quad\forall t\in [0,1]\setminus U_{j,\delta/2}.\] 
		We now set  $s_j(t)=(s_j^1(t),...,s_j^k(t))\in\bar{B}^k$ to be \[s_j(t)=\phi_j^t(H_j(t,1),c_j(t)T_j),\quad\text{if }t\in U_{j,\delta/2},\]
		and 
		\[s_j(t)=0,\quad\text{if }t\in[0,1]\setminus U_{j,\delta/2}.\]
		We define $\gamma'_{j}(\cdot,t)$ to be:
		$$\gamma'_{j}(\cdot,t):=\Pi\circ\Big(\gamma_{j}(\cdot,t)+\sum_{l=1}^k s_j^l(t)X_l(t)\Big).$$
		Since $s_j$ is homotopic to the zero map in $\bar{B}^k$, so $\gamma'_{j}(\cdot,t)$ is homotopic to $\gamma_{j}(\cdot,t).$
		\begin{claim}
			$\{\gamma_j'(\cdot,t)\}_{j\in\mathbb{N}}$ is a minimizing sequence.
		\end{claim}
		\begin{proof}
			From the energy non-increasing property of $\{\phi_j^t(\cdot,x)\}\in\text{Diff}(\bar{B}^k)$ we have that for all $t\in[0,1]$
			\begin{equation}\label{mni}
			E(\gamma'_j(\cdot,t))=E_j^t(\phi_j^t(H_j(t,1),c_j(t)T_j))\leq E_j^t(H_j(t,1)).
			\end{equation}
			Moreover, from (\ref{2.229}), we know that there exists a continuous function $\Psi:[0,\infty)\to[0,\infty)$ with $\Psi(0)=0$ such that
			\[\Big|E(\gamma_{j,s}(\cdot,t))-E(\gamma_j(\cdot,t))\Big|\leq \Psi\Big(\|\sum_{l=1}^ks_lX_l\|_{W^{1,2}}\Big),\]
			and $H_j(t,1)\in B^k_{1/2^j}$ implies that
			\begin{equation}\label{mmmarco}
			\begin{split}
			 E(\gamma'_j(\cdot,t))&=E_j^t(\phi_j^t(H_j(t,1),c_j(t)T_j))\\
			 &\leq E_j^t(H_j(t,1))\\
&\leq E(\gamma_j(\cdot,t))+\Psi\Big(\frac{1}{2^{j}}\sum_{l=1}^k\| X_l\|_{W^{1,2}}\Big).   
			\end{split}
		\end{equation}
By (\ref{mmmarco}) and that $\gamma_j'(\cdot,t)$ is homotopic to $\gamma_j(\cdot,t)$ we have that
	\[W\leq\lim_{j\to\infty}\max_{t\in[0,1]}E(\gamma_j'(\cdot,t))=\lim_{j\to\infty}\max_{t\in[0,1]}E(\gamma_j(\cdot,t))=W,\]
			which finishes the proof
		\end{proof}
		\begin{claim}
			There exists $j_k'\in\mathbb{N}$ such that
			\[d_B(\gamma'_j(\cdot,t),\{k^l_i\}_{i=0}^{m_l})>\epsilon_l\quad\text{for }l=1,...,N_k,\forall t\in[0,1],\]
			for all $j>j_k$.
		\end{claim}
		\begin{proof}
			The claim follows from the assumption \ref{fork}.
		\end{proof}
		
		\begin{claim}
			there exists $\epsilon_J>0$ and $J\in\mathbb{N}$ such that
			\[d_B(\gamma_j'(\cdot,t),\{u_i\}_{i=0}^n)>\epsilon_J,\quad\forall t\in[0,1],\]
			for all $j>J$.
		\end{claim}
		\begin{proof}
			There are three cases to consider.

\textbf{case 1}
$t\in[0,1]\setminus U_{j,\delta/2}.$

$\gamma_j'(\cdot,t)=\gamma_j(\cdot,t)$ for all $j$, so there exists $\epsilon_1>0$ such that $d_B(\gamma'_j(\cdot,t),\{u_i\}_{i=0}^n)>\epsilon_1$ for all $j\in\mathbb{N}$.

\textbf{case 2}
$t\in U_{j,\delta/3}.$

By claim \ref{flow} we have
\begin{align*}
E(\gamma'_j(\cdot,t))&=E_j^t(\phi_j^t(H_j'(t,1),T_j))\\
&<E_j^t(0)-\frac{c}{10}\\
&=E(\gamma_j(\cdot,t))-\frac{c}{10},\quad\forall j\in\mathbb{N},    
\end{align*}
so 
\[\lim_{j\to\infty}\max_{t\in U_{j,\delta/3}}E(\gamma'_j(\cdot,t))<
\lim_{j\to\infty}\max_{t\in [0,1]}E(\gamma_j(\cdot,t))-\frac{c}{10}=W-\frac{c}{10}.\]
It implies that there exists  $\epsilon_2>0$ so $d_B(\gamma'_j(\cdot,t),\{u_i\}_{i=0}^n)>\epsilon_2$, or else by remark \ref{varifold energy} $\lim_{j\to\infty}\max_{t\in U_{j,\delta/3}}E(\gamma'_j(\cdot,t))=\sum_{i=0}^nE(u_i)=W$.
			
\textbf{case 3}
$t\in U_{j,\delta/2}\setminus U_{j,\delta/3}.$
			
By (\ref{mmmarco}) we have 
\[E(\gamma'_j(\cdot,t))\leq E(\gamma_j(\cdot,t))+\Psi\Big(\frac{1}{2^{j}}\sum_{l=1}^k\| X_l\|_{W^{1,2}}\Big),\]
for a continuous function $\Psi:[0,\infty)\to[0,\infty)$ with $\Psi(0)=0$.
and claim \ref{MMarco} implies that there exists $\epsilon_3>0$ so $d_B(\gamma'_j(\cdot,t),\{u_i\}_{i=0}^n)>\epsilon_3$ for $j$ sufficiently large.  

Let $\epsilon_J=\min\{\epsilon_1,\epsilon_2,\epsilon_3\}$, so we have 
\[d_B(\gamma_j(\cdot,t),\{u_i\}_{i=0}^n)>\epsilon_J,\:\forall t\in[0,1],\]
for $j$ sufficiently large.
\end{proof}
We have proved theorem \ref{deformation}.
	\end{proof}
\subsection{Proof of Theorem \ref{main}}
	\subsubsection*{Theorem \ref{main}}{\em Let $(M,g)$ be a closed Riemannian manifold of dimension at least three, $g$ generic and a nontrivial homotopy group $\pi_3(M)$, let $W$ be the width associated to the homotopy class $\Omega_\beta$ (see definition \ref{width}, (\ref{44.4})). Then there exists a collection of finitely many harmonic spheres $\{u_i\}_{i=0}^m,\: u_i:S^2\to M$, which satisfies the following properties:
		\begin{enumerate}
			\item $\sum_{i=0}^m E(u_i)=W,$
			\item $\sum_{i=0}^m Index(u_i)\leq 1.$
	\end{enumerate}}
	\begin{proof}
		Denote by $\mathcal{U}$ the collections of equivalent classes of harmonic spheres $\{[u_i]\}_{i=0}^n$ with $\sum_{i=0}^n\text{Index}(u_i)>1$ and $\sum_{i=0}^nE(u_i)=W$. By proposition \ref{countable}, $\mathcal{U}$ is countable and thus we can write  $\mathcal{U}=\Big\{\{[u^1_i]\}_{i=0}^{n_1},\{[u^2_i]\}_{i=0}^{n_2},...\Big\}$ with $\sum_{i=0}^{n_l}E(u^l_i)=W$ and $\sum_{i=0}^{n_l}\text{Index}(u_i^l)>1$ for each $l\in\mathbb{N}$. 
		
		Given a minimizing sequence $\{\gamma_j(\cdot,t)\}_{j\in\mathbb{N}}$, we consider  the collection of harmonic spheres: $\{[u_i^1]\}_{i=0}^{n_1}$, and by Theorem \ref{deformation} there exists $\{\gamma^1_j(\cdot,t)\}_{j\in\mathbb{N}}$ so that
		\begin{enumerate}
			\item $\gamma^1_j(\cdot,t)$ is homotopic to $\gamma_j(\cdot,t)$,
			\item $\{\gamma^1_j(\cdot,t)\}_{j\in\mathbb{N}}$ is a minimizing sequence,
			\item 
			there exists $\epsilon_1>0$ and $i_1\in\mathbb{N}$ such that
			\[d_B(\gamma_j^1(\cdot,t),\{u_i^1\}_{i=0}^{n_1})>\epsilon_1,\quad\forall t\in[0,1],\]
			for all $j>i_1$.
		\end{enumerate}
		We can apply Theorem \ref{deformation} again at the minimizing sequence 
		$\{\gamma_j^1(\cdot,t)\}_{j\in\mathbb{N}}$, with $\{[u^2_i]\}_{i=0}^{n_2}$ the given collection of harmonic spheres, and set the compact set of harmonic spheres $K$ to be $K^1:=\big\{\{[u_i^1]_{i=0}^{n_1}\}\big\}$, and obtain $\{\gamma_j^2(\cdot,t)\}_{j\in\mathbb{N}}$ so that
		\begin{enumerate}
			\item $\gamma^2_j(\cdot,t)$ is homotopic to $\gamma_j(\cdot,t)$,
			\item $\{\gamma^2_j(\cdot,t)\}_{j\in\mathbb{N}}$ is a minimizing sequence,
			\item 
			there exist $\epsilon_1,\epsilon_2>0$ and $i_1,i_2\in\mathbb{N}$ such that
			\[d_B(\gamma_j^1(\cdot,t),\{u_i^l\}_{i=0}^{n_l})>\epsilon_l,\quad j>i_l,\:\forall t\in[0,1],\]
			$l=1,2$.
		\end{enumerate}
		Proceeding inductively we can find $\{\gamma_j^m\}_{j\in\mathbb{N}}$ such that
		\begin{enumerate}
			\item $\gamma^m_j(\cdot,t)$ is homotopic to $\gamma_j(\cdot,t)$,
			\item $\{\gamma^m_j(\cdot,t)\}_{j\in\mathbb{N}}$ is a minimizing sequence,
			\item 
			there exist $\epsilon_l>0$ and $i_l\in\mathbb{N}$, $l=1,...,m$, such that
			\[d_B(\gamma_j^1(\cdot,t),\{u_i^l\}_{i=0}^{n_l})>\epsilon_l,\quad j>i_l,\:\forall t\in[0,1].\]
		\end{enumerate}
		We can choose an increasing sequence $p_m>i_m$ such that
		\[\max_{t\in[0,1]}E(\gamma^m_{p_m}(\cdot,t))\leq W+\frac{1}{m}.\]
		The sequence $\{\gamma^m_{p_m}(\cdot,t)\}_{m\in\mathbb{N}}$ is a minimizing sequence, thus by Theorem \ref{CDDD}, there exists a sequence $\{t_m\}_{m\in\mathbb{N}}\subset[0,1]$ and a collection of finitely many harmonic spheres $\{v_i\}_{i=0}^m$ with $\sum_{i=0}^mE(v_i)=W$, such that $\{\gamma^m_{p_m}(\cdot,t_m)\}_{m\in\mathbb{N}}$ bubble converges to up to subsequence. i.e.,
		\[d_B(\gamma^m_{p_m}(\cdot,t_m),\{v_i\}_{i=0}^m)\to 0,\:m\to\infty.\]
		Since $\gamma^m_{p_m}(\cdot,t)$ is away from $\mathcal{U}$ as $m\to\infty$ so we have
		$\sum_{i=0}^m Index(v_i)
		\leq 1$, this is what we wanted to prove.
	\end{proof}

\appendix
\section{}\label{F1.section}
	The goal of this section is to prove theorem \ref{CDDD}, that \textit{any minimizing sequence has a min-max sequence that bubble converges to a finite collection of harmonic spheres}. Theorem \ref{CDDD} doesn't follow immediately from Colding-Minicozzi's result \cite[theorem 1.8]{CD}, which states that there exists a sweepout, so that whenever the area of a slice of the sweepout is close to the width it must be close to a finite collection of harmonic spheres in bubble tree sense itself. \cite[theorem 1.8]{CD} is proven by showing that given any minimizing sequence $\{\gamma_j(\cdot,t)\}_{j\in\mathbb{N}}$, we can apply \textit{harmonic replacement}, so that the \textit{pulled-tight} sequence 
	$\tilde{\gamma}_j(\cdot,t)$ contains a min-max sequence, which is almost harmonic (see theorem \ref{almost harmonic}), thus bubble converges to a collection of finitely many harmonic spheres $\{u_i\}_{i=0}^n$ (whose sum of the energies realizes the width). That is,
	\[\{u_i\}_{i=0}^n\in\Lambda(\{\tilde{\gamma}_j(\cdot,t)\}_{j\in\mathbb{N}}).\]
Since $\tilde{\gamma}_j(\cdot,t)$ is obtained from $\gamma_j(\cdot,t)$ by doing harmonic replacement on the disjoint closed balls on $S^2$ with energy at most $\epsilon_1>0$ ($\epsilon_1$ as given in \cite[theorem 3.1]{CD}, so by \cite[theorem 3.1]{CD} we have
	\[E(\gamma_j(\cdot,t))-E(\tilde{\gamma}_j(\cdot,t))\geq\frac{1}{2}\int_{S^2}|\nabla\tilde{\gamma}_j(\cdot,t)-\nabla\gamma_j(\cdot,t)|^2.\]
		Combining with the assumption $\gamma_j$ is a minimizing sequence, we can conclude that
	\[\{u_i\}_{i=0}^n\in\Lambda(\{\gamma_j(\cdot,t)\}_{j\in\mathbb{N}}).\]
	We first list several technical results in \cite{CD} and then prove theorem \ref{CDDD}.
	\begin{theorem}[Compactness for almost harmonic maps, \cite{CD}]\label{almost harmonic}
	Suppose that $0<\epsilon\leq\epsilon_{su}$ ($\epsilon_{su}$ is a constant given by  \cite[theorem 3.2]{SU}), $E_0>0$ are constants and $\gamma_{j}:S^2\to M$ is a sequence of $C^0\cap W^{1,2}(S^2,M)$  maps with $E(\gamma_{j})\leq E_0$ satisfying:
		\begin{enumerate}
			\item $A(\gamma_{j})>E(\gamma_{j})-1/j$.
			\item For any finite collection of disjoint closed balls $B$ in $S^2$ with
			\[\int_{B}|\nabla\gamma_{j}|^2<\epsilon,\]
			there is an energy minimizing map $v:B\to M$ that equals to $\gamma_{j}$ on $\frac{1}{8}\partial B$ with
			\[\int_{\frac{1}{8}B}|\nabla\gamma_{j}-\nabla v|^2<\frac{1}{j}.\]
		\end{enumerate}
		Then a subsequence of $\gamma_{j}$ bubble converges to a finite collection of harmonic spheres $u_0,...,u_m:S^2\to M.$
	\end{theorem}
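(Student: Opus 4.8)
The plan is to deduce this from Colding--Minicozzi \cite{CD}; below I outline the three main steps of their argument rather than reproduce the calculations. First I would extract, using the energy bound $E(\gamma_j)\le E_0$, a subsequence with $\gamma_j\rightharpoonup u_0$ weakly in $W^{1,2}(S^2,M)$, and define the concentration set $\mathcal S_0$ as the set of points at which at least $\epsilon/2$ of the energy concentrates in every small ball; it is finite, with $\#\mathcal S_0\le 2E_0/\epsilon$. Away from $\mathcal S_0$, each point has a ball $B$ with $\int_B|\nabla\gamma_j|^2<\epsilon$ for large $j$, so hypothesis (2) lets one compare $\gamma_j$ with the energy-minimizing replacement $v_j$ having the same boundary values on $\tfrac18\partial B$; since $\int_{\frac18 B}|\nabla\gamma_j-\nabla v_j|^2<1/j$ and small-energy minimizing maps satisfy interior estimates, $\gamma_j\to u_0$ strongly in $W^{1,2}_{\mathrm{loc}}(S^2\setminus\mathcal S_0)$ and $u_0$ is weakly harmonic there, hence a smooth harmonic sphere by the removable-singularity theorem. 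This establishes item (1) of Definition \ref{bubble convergence}.

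Next I would blow up at each $x_l\in\mathcal S_0$: choose balls $B_{r_{i,j}}(y_{i,j})$ with $y_{i,j}\to x_l$ capturing a definite share of the concentrating energy at the correct scale, and let $D_{i,j}$ be the conformal dilation sending the southern hemisphere onto $B_{r_{i,j}}(y_{i,j})$. Both hypotheses of the theorem are invariant under composition with conformal automorphisms of $S^2$: energy and area are conformally invariant, and hypothesis (2) is a scale-free statement about disjoint small-energy balls. Hence $\gamma_j\circ D_{i,j}$ again satisfies (1),(2) with energy $\le E_0$, and the argument of the first step yields a harmonic sphere $u_i$ together with a possibly smaller concentration set. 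Iterating, and using the Sacks--Uhlenbeck energy gap --- every nontrivial harmonic sphere has energy $\ge\bar\epsilon>0$ --- against the total energy bound $E_0$, the procedure terminates after finitely many steps, producing $u_0,\dots,u_m$. The mutual-separation condition, item (3), is built into the choice of the $B_{r_{i,j}}(y_{i,j})$, and item (2) of Definition \ref{bubble convergence} is exactly the statement that $\gamma_j\circ D_{i,j}$ converges to $u_i$ as in item (1).

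The hard part is item (4), the energy identity $\sum_{i=0}^m E(u_i)=\lim_j E(\gamma_j)$, i.e.\ the absence of neck energy; this is the only place where hypothesis (1), the almost-conformality $A(\gamma_j)>E(\gamma_j)-1/j$, is needed. On each neck --- an annulus $B_{R_j}(p)\setminus B_{r_j}(p)$ with $r_j/R_j\to 0$ on which the energy stays below $\epsilon$ --- I would decompose into dyadic sub-annuli, apply hypothesis (2) on each to replace $\gamma_j$ by a nearly energy-minimizing map with the same boundary values, and run a Pohozaev/angular-energy estimate showing that the radial part of the energy controls the full energy on the neck. Almost-conformality forces the Hopf differential of $\gamma_j$ to be small in $L^1$, which kills the radial--angular imbalance that a nontrivial neck would carry; combining these gives $\int_{\mathrm{neck}}|\nabla\gamma_j|^2\to 0$, and summing over the finitely many necks yields the energy identity. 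Making the dyadic estimate uniform across scales and controlling the interaction between consecutive scales is the delicate technical core, carried out in \cite{CD}, and this is where I expect the main obstacle to lie. Once items (1)--(4) are all in place, $\gamma_j$ bubble converges to $\{u_i\}_{i=0}^m$ along the chosen subsequence, which is the assertion.
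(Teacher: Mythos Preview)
The paper does not supply its own proof of this theorem; it is quoted from Colding--Minicozzi \cite{CD} as a black box and then invoked in the proof of Theorem \ref{CDDD}. Your outline is a faithful sketch of the argument in \cite{CD} --- concentration-compactness with hypothesis (2) giving strong convergence off the bubble set, conformal rescaling and iteration terminated by the energy gap, and the neck analysis driven by the almost-conformality hypothesis (1) --- so there is nothing further to compare.
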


	\begin{theorem}[Colding-Minicozzi, theorem 3.1 \cite{CD}]\label{convex}
		There exists a contant $\epsilon_1>0$ (depending on $M$) so that if $u$ and $v$ are $W^{1,2}$ maps from $B_1\subset\mathbb{R}^2$ to $M$, $u$ and $v$ agree on $\partial B_1$, and $v$ is weakly harmonic with energy at most $\epsilon_1$, then
		\[\int_{B_1}|\nabla u|^2-\int_{B_1}|\nabla v|^2\geq\frac{1}{2}\int_{B_1}|\nabla v-\nabla u|^2.\]
	\end{theorem}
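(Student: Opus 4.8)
The plan is to expand the energy difference and absorb the resulting cross term using the weak harmonic map equation together with an $\varepsilon$-regularity estimate and Hardy's inequality. Set $w:=u-v$, regarded as an $\mathbb{R}^N$-valued map; since $M$ is compact and $u,v\in W^{1,2}(B_1,M)$ agree on $\partial B_1$, we have $w\in W^{1,2}_0(B_1,\mathbb{R}^N)\cap L^\infty$. Then
\[
\int_{B_1}|\nabla u|^2-\int_{B_1}|\nabla v|^2=\int_{B_1}|\nabla w|^2+2\int_{B_1}\langle\nabla v,\nabla w\rangle,
\]
so the theorem reduces to proving $\bigl|2\int_{B_1}\langle\nabla v,\nabla w\rangle\bigr|\le\tfrac12\int_{B_1}|\nabla w|^2$ provided $\epsilon_1$ is chosen small.

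First I would use that $v$ is weakly harmonic: by the harmonic map equation (\ref{whel}), $\Delta v=-A(\nabla v,\nabla v)$ holds in $L^1(B_1)$, so (after approximating $w$ by smooth compactly supported maps with uniformly bounded sup norm) testing against $w$ gives
\[
\int_{B_1}\langle\nabla v,\nabla w\rangle=\int_{B_1}\langle A(\nabla v,\nabla v),w\rangle.
\]
Since $A(\nabla v,\nabla v)$ is normal to $M$ along $v$ while $u$ and $v$ both take values in $M$, the elementary geometric estimate that the normal component at $v$ of the chord $u-v$ is $O(|u-v|^2)$ (with constant depending on the second fundamental form of $M$, cf.\ \cite{LS}) yields
\[
\Bigl|\int_{B_1}\langle\nabla v,\nabla w\rangle\Bigr|\le C_1\int_{B_1}|\nabla v|^2|w|^2
\]
for a constant $C_1=C_1(M)$.

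The crux, and the only place where smallness of the energy enters, is the weighted estimate $\int_{B_1}|\nabla v|^2|w|^2\le C(M)\,\epsilon_1\int_{B_1}|\nabla w|^2$. For this I would combine interior $\varepsilon$-regularity for weakly harmonic maps from surfaces with Hardy's inequality. Writing $d(x):=1-|x|=\operatorname{dist}(x,\partial B_1)$: if $\epsilon_1$ is below the $\varepsilon$-regularity threshold, applying $\varepsilon$-regularity on $B_{d(x)}(x)\subset B_1$ gives the pointwise bound $|\nabla v(x)|^2\le C\,d(x)^{-2}\int_{B_{d(x)}(x)}|\nabla v|^2\le C\,\epsilon_1\,d(x)^{-2}$; and Hardy's inequality on the convex domain $B_1$ gives $\int_{B_1}|w|^2 d(x)^{-2}\,dx\le 4\int_{B_1}|\nabla w|^2$ since $w$ has zero trace. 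Multiplying these and then choosing $\epsilon_1=\epsilon_1(M)$ so small that $2C_1\cdot C(M)\,\epsilon_1\le\tfrac12$ closes the argument:
\[
\int_{B_1}|\nabla u|^2-\int_{B_1}|\nabla v|^2\ge\int_{B_1}|\nabla w|^2-\tfrac12\int_{B_1}|\nabla w|^2=\tfrac12\int_{B_1}|\nabla u-\nabla v|^2.
\]

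The hard part is exactly this bridge $\int|\nabla v|^2|w|^2\lesssim\epsilon_1\|\nabla w\|_{L^2}^2$: the naive splitting of $B_1$ into an interior piece (where $\varepsilon$-regularity bounds $|\nabla v|$) and a thin boundary collar (where $\int|\nabla v|^2$ is small) fails, because in two dimensions $\|w\|_{L^\infty}$ is not controlled by $\|\nabla w\|_{L^2}$, so one must instead use the weighted/Hardy formulation together with the sharp gradient decay $|\nabla v(x)|^2\lesssim\epsilon_1(1-|x|)^{-2}$ up to the boundary. Everything else — the algebraic expansion, the normal-component geometry of $M$, and the justification that $w$ is an admissible test function for the harmonic map equation — is routine.
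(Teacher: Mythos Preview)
The paper does not supply its own proof of this statement: it is quoted as Theorem~3.1 of Colding--Minicozzi \cite{CD} and used only as a black box (in Corollary~\ref{pulltight}). So there is no in-paper argument to compare against; what follows is an assessment of your proof on its own merits.

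Your argument is correct. The expansion $E(u)-E(v)=\int|\nabla w|^2+2\int\langle\nabla v,\nabla w\rangle$, the use of the weak harmonic map equation to rewrite the cross term as $\int\langle A(\nabla v,\nabla v),w\rangle$, and the normal-chord bound $|\langle A(\nabla v,\nabla v),w\rangle|\le C(M)|\nabla v|^2|w|^2$ are exactly the standard opening moves; the last inequality holds globally (not just for $u$ close to $v$) because $M$ is compact, so one has a uniform constant with $|w^\perp_v|\le C|w|^2$ for any two points of $M$. That $w\in W^{1,2}_0\cap L^\infty$ is an admissible test function is fine: by H\'elein's theorem $v$ is smooth in the interior, so $A(\nabla v,\nabla v)\in L^1$ and the pairing and approximation go through.

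Your endgame via the pointwise bound $|\nabla v(x)|^2\le C\epsilon_1\,d(x)^{-2}$ combined with the boundary-distance Hardy inequality $\int_{B_1}|w|^2 d^{-2}\le 4\int_{B_1}|\nabla w|^2$ on the convex domain $B_1$ is a clean way to handle the boundary layer in one stroke, and it avoids any dyadic decomposition or separate collar argument. One minor caveat: Theorem~\ref{regularity} as stated in the paper requires $r_0<\rho$, so at points with $d(x)\ge\rho$ you should apply it on $B_\rho(x)$ instead of $B_{d(x)}(x)$; since $d(x)\le 1$ this only affects the constant. With that adjustment the chain of inequalities closes exactly as you wrote.
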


	\begin{theorem}[Colding-Minicozzi, \cite{CD}]\label{harmonicreplacem}
	There's a constant $\epsilon_0>0$ and a continuous function $\Phi:[0,\infty)\to[0,\infty)$ with $\Phi(0)=0,$ both depending on $M$, so that given any $\gamma\in\Omega_\beta$ without nonconstant harmonic slices and $\beta\in\pi_3(M)$ nontrivial, there exists $\tilde{\gamma}\in\Omega_\beta$ so that
	\[E(\tilde{\gamma}(\cdot,t))\leq E(\gamma(\cdot,t)),\]
	for each $t$ and so for each $t$ with $E(\tilde{\gamma}(\cdot,t))\geq W/2$ we have the following:
If $\mathcal{B}$ is any finite collection of disjoint  closed balls in $S^2$ with $\int_{\mathcal{B}}|\nabla\gamma(\cdot,t)|^2<\epsilon_0$ and $v:\cup_{B\in\mathcal{B}}\frac{1}{8}B\to M$ is an energy minimizing map equal to $\gamma(\cdot,t)$ on $\cup_{B\in\mathcal{B}}\frac{1}{8}\partial B$, then
	\[\int_{\cup_{B\in\mathcal{B}}\frac{1}{8}B}|\nabla\tilde{\gamma}(\cdot,t)-\nabla v|^2\leq\Phi(E(\tilde{\gamma}(\cdot,t))-E(\gamma(\cdot,t))).\]
\end{theorem}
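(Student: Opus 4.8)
The statement is the harmonic replacement (``tightening'') construction of Colding and Minicozzi, and the proof is the one in \cite{CD}, which I now sketch. Everything rests on three local facts about a $W^{1,2}$ map $u$ of a round $2$-disc $B$ into $M$ with $\int_B|\nabla u|^2<\varepsilon_{su}$, the Sacks--Uhlenbeck threshold (as in Theorem \ref{almost harmonic}): (i) the Dirichlet problem for the energy on $B$ has a smooth energy minimizer $v$; (ii) by the energy convexity of Theorem \ref{convex}, $v$ is the \emph{unique} minimizer, it depends continuously in $W^{1,2}(B,M)$ on the boundary data, and every competitor with $u=v$ on $\partial B$ satisfies $\int_B|\nabla u|^2-\int_B|\nabla v|^2\ge\tfrac12\int_B|\nabla u-\nabla v|^2$; (iii) since $u$ and $v$ agree on $\partial B$ and have small energy, $v$ is homotopic to $u$ rel $\partial B$, so performing this replacement inside finitely many disjoint small-energy discs leaves the homotopy class of a sweepout unchanged.

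First I would fix two finite families $\mathcal B^1,\mathcal B^2$ of pairwise disjoint closed geodesic discs in $S^2$, each disc small enough that its energy on \emph{every} slice of $\gamma$ is below $\varepsilon_{su}$ --- possible because the image of the continuous path $t\mapsto\gamma(\cdot,t)$ is compact in $W^{1,2}(S^2,M)$, hence equi-integrable --- and staggered so that the eighth-shrinkings $\{\tfrac18 B:B\in\mathcal B^1\cup\mathcal B^2\}$ still cover $S^2$. A \emph{round of replacement} $\Psi$ replaces $\gamma(\cdot,t)$ on each $\tfrac18 B$, $B\in\mathcal B^1$, by the energy minimizer with the same boundary values, and then does the same for $\mathcal B^2$. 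By (i)--(iii), $\Psi(\gamma)\in\Omega_\beta$ and $E(\Psi(\gamma)(\cdot,t))\le E(\gamma(\cdot,t))$ for every $t$; continuity of $t\mapsto\Psi(\gamma)(\cdot,t)$ follows from (ii), and here the hypothesis that $\gamma$ has no nonconstant harmonic slice, together with a partition of unity in the $t$-variable, is what keeps $\Psi$ well-behaved near slices that are already harmonic. I would set $\widetilde\gamma:=\Psi(\gamma)$.

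For the modulus $\Phi$, fix $\varepsilon_0<\varepsilon_{su}$ (also below the constant of Theorem \ref{convex}, so depending only on $M$), a slice $t$ with $E(\widetilde\gamma(\cdot,t))\ge W/2$, and any admissible collection $\mathcal B$ with $\int_{\mathcal B}|\nabla\gamma(\cdot,t)|^2<\varepsilon_0$, with energy minimizer $v$ on $\cup_{B\in\mathcal B}\tfrac18 B$ equal to $\widetilde\gamma(\cdot,t)$ on the boundary. Letting $w$ agree with $\widetilde\gamma(\cdot,t)$ off $\cup\tfrac18 B$ and with $v$ on it, Theorem \ref{convex} gives $\tfrac12\int_{\cup\frac18 B}|\nabla\widetilde\gamma(\cdot,t)-\nabla v|^2\le E(\widetilde\gamma(\cdot,t))-E(w)$, so it suffices to bound the energy this replacement still saves by a continuous function, vanishing at $0$, of the total drop $E(\gamma(\cdot,t))-E(\widetilde\gamma(\cdot,t))$. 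After the $\mathcal B^2$-step, $\widetilde\gamma(\cdot,t)$ is exactly minimizing on $\tfrac18 B$ for $B\in\mathcal B^2$ and, by (ii) applied to that step (the map moved by at most a $W^{1,2}$-amount governed by the energy dropped there, which is at most the total drop, and the minimizer depends continuously on boundary data), it is ``almost minimizing'' on $\tfrac18 B$ for $B\in\mathcal B^1$; since the $\tfrac18 B$ cover $S^2$, $\widetilde\gamma(\cdot,t)$ is almost harmonic on all of $S^2$ with a bound governed by the total drop. A compactness argument then closes the estimate: if it failed there would be sweepouts, their tightenings, slices of energy $\ge W/2$, and admissible $\mathcal B_j$ with total drop $\to 0$ but saved energy $\ge\delta_0>0$; the slices converge, strongly away from the finitely many energy-concentration points (which the $\mathcal B_j$ avoid, having energy $<\varepsilon_0$), to a harmonic map on which no replacement saves energy --- a contradiction. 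This produces a continuous $\Phi$ with $\Phi(0)=0$ depending only on $M$; in the displayed conclusion $\Phi$ is evaluated at the (nonnegative) drop $E(\gamma(\cdot,t))-E(\widetilde\gamma(\cdot,t))$, and the cutoff $E(\widetilde\gamma(\cdot,t))\ge W/2$ is what keeps these slices away from the constant maps where the compactness step would be vacuous.

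The main obstacle is the bookkeeping that keeps the construction simultaneously energy-nonincreasing, continuous in $t$, and inside the fixed homotopy class $\Omega_\beta$, while each individual harmonic replacement is only defined and continuous on discs of energy $<\varepsilon_{su}$; the device for this is the pair of staggered disc families together with cutoffs in $t$, exactly as in \cite{CD}. The secondary, quantitative difficulty is the final step --- upgrading ``almost invariant under replacement on the two fixed families $\mathcal B^1,\mathcal B^2$'' to ``almost invariant under replacement on an arbitrary admissible $\mathcal B$'' with a modulus $\Phi$ that does not depend on $\gamma$ --- which is where the compactness argument and the energy convexity of Theorem \ref{convex} do the essential work.
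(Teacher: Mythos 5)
The paper does not prove this theorem at all: it is imported verbatim (modulo some sign/label typos in the displayed conclusion, which you implicitly correct by evaluating $\Phi$ at the nonnegative drop $E(\gamma(\cdot,t))-E(\tilde\gamma(\cdot,t))$) from \cite{CD}, and is used as a black box in Appendix \ref{F1.section}. So the only meaningful comparison is with the Colding--Minicozzi argument itself. Your first two paragraphs track that argument faithfully: harmonic replacement on small-energy discs, uniqueness and continuous dependence from the energy convexity of Theorem \ref{convex}, two staggered ball families whose $\tfrac18$-shrinkings cover $S^2$, and cutoffs in $t$ to keep the construction continuous and inside $\Omega_\beta$. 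That part is fine.

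The gap is in your final step, where you produce the modulus $\Phi$ by compactness. The class you would need to be compact is ``slices of arbitrary sweepouts with $E\ge W/2$ whose energy drops by at most $\mu_j\to 0$ under replacement on the two fixed families,'' and this class is only bounded in $W^{1,2}$; bounded energy gives weak convergence at best, and neither the energy saved by harmonic replacement nor the $W^{1,2}$-distance to the replaced map is weakly continuous. To get the strong convergence away from finitely many concentration points that your contradiction argument invokes, you would need the slices to be almost harmonic in the sense of condition (2) of Theorem \ref{almost harmonic} --- i.e.\ almost minimizing on \emph{every} small-energy collection of balls --- which is exactly the conclusion you are trying to prove, so the argument as written is circular. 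Your parenthetical that the collections $\mathcal B_j$ ``avoid'' the concentration points because they have energy $<\varepsilon_0$ is also unjustified: for each fixed $j$ a ball of energy $<\varepsilon_0$ can sit deep inside a neck or bubble region, and it is precisely there that the limit harmonic map gives no control over the replacement savings of the approximating slices. Colding and Minicozzi avoid all of this: their proof of the key comparison lemma is a direct quantitative argument that strings together the convexity inequality of Theorem \ref{convex} and an interpolation estimate across the overlaps of the two covering families, producing an explicit $\Phi$ depending only on $M$ (your compactness route, even if repaired, would a priori yield a $\Phi$ depending on $\beta$ through the cutoff $E\ge W/2$). To complete your sketch you should replace the compactness step by that direct estimate, or at minimum prove a version of the almost-harmonic compactness theorem whose hypotheses only involve the two fixed families.
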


\begin{proposition}[Proposition 1.2 \cite{CD}]\label{A=E}
Given a closed manifold $M$ with dimension $n\geq 3$, and a map $\beta\in\Omega$ representing a nontrivial class in $\pi_3(M)$. The width of energy $W$, and the width of area $W_A$ associated to the homotopy class $\Omega_\beta$ are equal.    
	\end{proposition}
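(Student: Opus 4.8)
The plan is to prove the two inequalities $W_A\le W$ and $W\le W_A$ separately. The first is immediate: in two dimensions one has $\operatorname{Area}(u)\le E(u)$ for every $u\in W^{1,2}(S^2,M)$, with equality precisely when $u$ is weakly conformal, so for any sweepout $\sigma\in\Omega_\beta$ we get $\max_{t}\operatorname{Area}(\sigma(\cdot,t))\le\max_{t}E(\sigma(\cdot,t))$, and taking the infimum over $\Omega_\beta$ gives $W_A\le W$. The content is the reverse inequality, which I would prove by reparametrizing each slice of a near-optimal area sweepout by a diffeomorphism of $S^2$ that makes it nearly conformal; by the equality case above this drives its energy down close to its area.

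Fix $\epsilon>0$ and pick $\sigma\in\Omega_\beta$ with $\max_{t}\operatorname{Area}(\sigma(\cdot,t))<W_A+\epsilon$. First I would reduce to the case where $\sigma$ has smooth slices: mollifying $\sigma(\cdot,t)$ in the $S^2$ variable at a small scale $\rho$ and composing with the nearest point projection $\Pi$ produces $\sigma^\rho\in\Omega$ whose slices are smooth, which agrees with $\sigma$ at $t=0,1$ (the mollification of a constant is constant) and is homotopic to $\sigma$ through the straight-line homotopy inside the tubular neighborhood $M_\delta$, hence $\sigma^\rho\in\Omega_\beta$. Since $\{\sigma(\cdot,t):t\in[0,1]\}$ is compact in $W^{1,2}$, one has $\sigma^\rho\to\sigma$ in $W^{1,2}$ uniformly in $t$; as $\nabla u\mapsto J_u$ is continuous from $L^2$ to $L^1$, for $\rho$ small $\max_{t}\operatorname{Area}(\sigma^\rho(\cdot,t))<W_A+2\epsilon$. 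So assume $\sigma$ has smooth slices and set $E_0:=\sup_{t}E(\sigma(\cdot,t))<\infty$.

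Now fix a small $\tau>0$, let $g_0$ be the round metric on $S^2$, and for each $t$ form the smooth metric $h_t:=\sigma(\cdot,t)^{*}g+\tau g_0$. By the uniformization theorem there is $\psi_t\in\operatorname{Diff}(S^2)$ with $\psi_t^{*}h_t$ conformal to $g_0$. Putting $\tilde\sigma(\cdot,t):=\sigma(\cdot,t)\circ\psi_t$ and using the conformal invariance of the energy of maps from a surface together with $\sigma(\cdot,t)^{*}g\le h_t$, a direct computation gives
\[
E\big(\tilde\sigma(\cdot,t)\big)\;=\;E_{h_t}\!\big(\sigma(\cdot,t)\big)\;\le\;\operatorname{Vol}(S^2,h_t)\;\le\;\operatorname{Area}\big(\sigma(\cdot,t)\big)+\omega(\tau,E_0),
\]
where $E_{h_t}$ denotes the energy computed with $h_t$ as domain metric, $\operatorname{Vol}(S^2,h_t)=\int_{S^2}\sqrt{\det\big(\sigma(\cdot,t)^{*}g+\tau g_0\big)}\,d\mathrm{vol}_{g_0}$ with the determinant taken relative to $g_0$, and $\omega(\tau,E_0)\to0$ as $\tau\to0$ for fixed $E_0$ (by $\sqrt{\det(A+\tau g_0)}-\sqrt{\det A}\le\sqrt{\tau\operatorname{tr}_{g_0}A+\tau^2}$ and Cauchy--Schwarz). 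Choosing $\tau$ small enough that $\omega(\tau,E_0)<\epsilon$ yields $\max_{t}E(\tilde\sigma(\cdot,t))<W_A+3\epsilon$.

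The remaining, and I expect main, point is to carry this out with $t\mapsto\psi_t$ continuous and $\tilde\sigma\in\Omega_\beta$. Since $\psi_t$ is determined only up to precomposition with $PSL(2,\mathbb{C})$, I would pin it down by a conformal-barycenter (Douady--Earle type) normalization of the probability measure obtained by pulling back $d\mathrm{vol}_{h_t}/\operatorname{Vol}(S^2,h_t)$; this depends continuously on the smooth metric $h_t$, hence on $t$, and fixes $\psi_t$ up to a rotation. The residual $SO(3)$-ambiguity is a principal $SO(3)$-bundle over the interval $[0,1]$, which is trivial, so a continuous selection $t\mapsto\psi_t$ exists; at $t=0,1$, where $\sigma(\cdot,t)$ is constant and $h_t$ is round, $\psi_t$ is then a rotation, so $\tilde\sigma(\cdot,t)$ is again a point map. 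Finally $\tilde\sigma=\sigma\circ\Phi$ for the slice-preserving homeomorphism $\Phi(x,t)=(\psi_t(x),t)$ of $S^2\times[0,1]$, and $\Phi$ is isotopic to the identity through slice-preserving homeomorphisms (because $\operatorname{Diff}^{+}(S^2)$ is connected and $[0,1]$ is contractible, the path $t\mapsto\psi_t$ is homotopic to the constant path at $\mathrm{id}$), so $\tilde\sigma\in\Omega_\beta$. Hence $W\le\max_{t}E(\tilde\sigma(\cdot,t))<W_A+3\epsilon$, and letting $\epsilon\to0$ gives $W\le W_A$, so $W=W_A$. The continuous-dependence of uniformization and the bundle/homotopy bookkeeping are where the real work lies; the energy estimate itself is routine.
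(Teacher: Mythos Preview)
The paper does not prove this proposition; it is quoted verbatim from Colding--Minicozzi \cite{CD} and used as a black box in the appendix. So there is no in-paper argument to compare against.

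Your outline is the standard route and is essentially correct. The inequality $W_A\le W$ is immediate from $\operatorname{Area}\le E$. For $W\le W_A$, the regularize-then-uniformize scheme you describe is exactly how this is usually done, and the energy estimate
\[
E\big(\tilde\sigma(\cdot,t)\big)=E_{h_t}\big(\sigma(\cdot,t)\big)\le \operatorname{Vol}(S^2,h_t)\le \operatorname{Area}\big(\sigma(\cdot,t)\big)+\omega(\tau,E_0)
\]
is correct (the first inequality uses $\sigma(\cdot,t)^{*}g\le h_t$, the second follows from $\sqrt{\det(A+\tau g_0)}-\sqrt{\det A}\le\sqrt{\tau\,\mathrm{tr}_{g_0}A+\tau^2}$ and Cauchy--Schwarz, as you say). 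You have also correctly located the real content: the continuous dependence of the uniformizing map on $t$. Your proposal to kill the $PSL(2,\mathbb{C})$-ambiguity by a Douady--Earle barycenter normalization and then trivialize the residual $SO(3)$-bundle over $[0,1]$ is a legitimate and standard way to handle this. One small caution on the mollification step: you need the mollified-and-projected family $t\mapsto\sigma^\rho(\cdot,t)$ to remain continuous into $C^0\cap W^{1,2}$ and the straight-line homotopy to stay in $\Omega$; this is routine (compactness of $\{\sigma(\cdot,t)\}$ in $C^0\cap W^{1,2}$ gives uniform smallness), but worth stating. With those details supplied your argument is a complete proof, and it is in the same spirit as the one in \cite{CD}.
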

	\begin{remark}
		Let $\{\gamma_j\}_{j\in\mathbb{N}}$ be a minimizing sequence for $W$. Since $\text{Area}(\gamma_j(\cdot,t))\leq E(\gamma_j(\cdot,t))$, and $W_A=W$ by Proposition \ref{A=E}, we have that
		\[W_A\leq\lim_{j\to\infty}\max_{s\in[0,1]}\text{Area}(\gamma_j(\cdot,s))\leq\lim_{j\to\infty}\max_{s\in[0,1]}E(\gamma_j(\cdot,s))=W_A,\]
		which implies that $\{\gamma_j\}_{j\in\mathbb{N}}$ is also a minimizing sequence for $W_A$.
	\end{remark}

	\begin{theorem}[Colding-Minicozzi, theorem 1.8 \cite{CD}]\label{CD}
		Given a closed manifold $M$ with dimension $n\geq 3$, and a map $\beta\in\Omega$ representing a nontrivial class in $\pi_3(M)$, there exists a sequence of sweepouts $\gamma_{j}(\cdot,t)\in\Omega_\beta$ with $\max_{s\in[0,1]}\gamma_j(\cdot,s)\to W$, so that given $\epsilon>0$, there exists $\bar{j}$ and $\delta>0$ so that if $j>\bar{j}$ and
		\begin{equation*}
		Area(\gamma_j(\cdot,s))>W-\delta,
		\end{equation*}
		then there are finitely many harmonic spheres $u_i:S^2\to M$, $i=0,...,n$, with 
		\begin{equation*}
		d_V(\gamma_{j}(\cdot,s),\{u_i\}_{i=0}^n)<\epsilon.
		\end{equation*}
	\end{theorem}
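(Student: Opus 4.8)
The plan is to follow the Colding--Minicozzi tightening scheme. Starting from an arbitrary minimizing sequence of sweepouts, I would pull it tight by the harmonic replacement of Theorem \ref{harmonicreplacem} so that every near-maximal slice becomes \emph{almost harmonic} in the sense of Theorem \ref{almost harmonic}; the compactness of almost harmonic maps, combined with Proposition \ref{bv}, then forces varifold subconvergence of any min-max sequence to a finite collection of harmonic spheres. The uniform conclusion --- that \emph{every} slice with area exceeding $W-\delta$ is $d_V$-close to such a collection --- is extracted afterwards by a contradiction argument applied to the tightened sweepout.

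Concretely, I would fix $\{\gamma_j\}\subset\Omega_\beta$ with $\max_t E(\gamma_j(\cdot,t))\to W$. By Proposition \ref{A=E} one has $W_A=W$, and since $\mathrm{Area}\le E$ the same sequence is also minimizing for $W_A$. After a small perturbation within $\Omega_\beta$ removing the nonconstant harmonic slices (so that the hypothesis of Theorem \ref{harmonicreplacem} holds, at the cost of changing energies by an arbitrarily small amount), Theorem \ref{harmonicreplacem} yields $\tilde\gamma_j\in\Omega_\beta$ with $E(\tilde\gamma_j(\cdot,t))\le E(\gamma_j(\cdot,t))$ for all $t$; in particular $\max_t E(\tilde\gamma_j(\cdot,t))\to W$ and $\{\tilde\gamma_j\}$ is again minimizing for $W$ and for $W_A$. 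The two gains are: (a) the quantitative replacement estimate of Theorem \ref{harmonicreplacem}, saying that whenever $E(\tilde\gamma_j(\cdot,t))\ge W/2$, harmonic replacement of $\tilde\gamma_j(\cdot,t)$ on any finite disjoint family of closed balls of small energy moves it by at most $\Phi\big(E(\tilde\gamma_j(\cdot,t))-E(\gamma_j(\cdot,t))\big)$ in $W^{1,2}$; and (b) the global convexity bound $E(\gamma_j(\cdot,t))-E(\tilde\gamma_j(\cdot,t))\ge\frac12\int_{S^2}|\nabla\tilde\gamma_j(\cdot,t)-\nabla\gamma_j(\cdot,t)|^2$ obtained by summing Theorem \ref{convex} over the replaced balls.

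Next I would pick a min-max sequence adapted to \emph{area}: choose $t_j$ with $A(\tilde\gamma_j(\cdot,t_j))\ge W_A-1/j=W-1/j$, which is possible since $\max_t A(\tilde\gamma_j(\cdot,t))\ge W_A$. Then $W-1/j\le A(\tilde\gamma_j(\cdot,t_j))\le E(\tilde\gamma_j(\cdot,t_j))\le\max_t E(\gamma_j(\cdot,t))\to W$, so $A(\tilde\gamma_j(\cdot,t_j))\to W$, $E(\tilde\gamma_j(\cdot,t_j))\to W$, and $E(\tilde\gamma_j(\cdot,t_j))-A(\tilde\gamma_j(\cdot,t_j))\to 0$; after relabeling this is hypothesis (1) of Theorem \ref{almost harmonic}. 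Moreover $E(\tilde\gamma_j(\cdot,t_j))\le E(\gamma_j(\cdot,t_j))\le\max_t E(\gamma_j(\cdot,t))\to W$ forces $E(\gamma_j(\cdot,t_j))-E(\tilde\gamma_j(\cdot,t_j))\to 0$; by (b) the slices $\tilde\gamma_j(\cdot,t_j)$ and $\gamma_j(\cdot,t_j)$ are then $W^{1,2}$-close, so a small-energy ball family for one is a small-energy family for the other, and by (a) together with $\Phi(0)=0$ the harmonic replacement changes $\tilde\gamma_j(\cdot,t_j)$ by less than $1/j$ (after passing to a subsequence), which is hypothesis (2). Theorem \ref{almost harmonic} now produces harmonic spheres $u_0,\dots,u_n$ to which $\tilde\gamma_j(\cdot,t_j)$ bubble converges along a subsequence, and Proposition \ref{bv} upgrades this to $d_V(\tilde\gamma_j(\cdot,t_j),\{u_i\}_{i=0}^n)\to 0$.

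Finally, to obtain the uniform statement for $\tilde\gamma_j$, suppose it fails: there exist $\epsilon>0$, $j_l\to\infty$, $s_l\in[0,1]$ with $A(\tilde\gamma_{j_l}(\cdot,s_l))>W-1/l$ but $d_V(\tilde\gamma_{j_l}(\cdot,s_l),\{v_i\})\ge\epsilon$ for \emph{every} finite collection of harmonic spheres $\{v_i\}$. The squeeze $W-1/l< A(\tilde\gamma_{j_l}(\cdot,s_l))\le E(\tilde\gamma_{j_l}(\cdot,s_l))\le\max_t E(\gamma_{j_l}(\cdot,t))\to W$ shows $\{\tilde\gamma_{j_l}(\cdot,s_l)\}$ is itself a min-max sequence, and exactly the computation of the previous paragraph, now carried out at $s_l$, verifies hypotheses (1)--(2) of Theorem \ref{almost harmonic}; hence this sequence bubble converges, hence $d_V$-converges by Proposition \ref{bv} to a finite collection of harmonic spheres, contradicting the choice of $\{s_l\}$. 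Thus $\tilde\gamma_j$ is the desired sequence of sweepouts. The main obstacle is the tightening step itself --- producing a $t$-continuous, homotopy-class-preserving harmonic replacement whose per-slice energy drop controls the $W^{1,2}$ change of the slice --- which is precisely Theorem \ref{harmonicreplacem} and rests on the energy-convexity estimate Theorem \ref{convex}, so I would treat it as a black box; the only remaining delicate point is that Theorem \ref{almost harmonic} demands \emph{almost conformality} of the min-max sequence, its hypothesis (1), and this is exactly where the equality $W_A=W$ of Proposition \ref{A=E} is indispensable --- without it, near-maximal slices for energy need not be almost conformal and the compactness input does not apply.
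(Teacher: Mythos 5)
Your proposal is correct and follows essentially the route the paper itself takes: the paper states Theorem \ref{CD} as a citation of Colding--Minicozzi without proof, but its appendix proof of the closely related Theorem \ref{CDDD} uses exactly your scheme --- tighten via Theorem \ref{harmonicreplacem}, verify the two hypotheses of Theorem \ref{almost harmonic} using Proposition \ref{A=E} and the convexity estimate of Theorem \ref{convex}/Corollary \ref{pulltight}, then pass to varifold convergence via Proposition \ref{bv}. Your handling of the two delicate points (removing nonconstant harmonic slices before applying Theorem \ref{harmonicreplacem}, and the final contradiction argument to upgrade subconvergence of a min-max sequence to the uniform statement for all near-maximal slices) matches the standard argument, so no further comment is needed.
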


\begin{corollary}\label{pulltight}
		Given a sweepout $\{\gamma_j(\cdot,t)\}_{j\in\mathbb{N}}$, the pulled-tight of it: $\{\tilde{\gamma}_j(\cdot,t)\}_{j\in\mathbb{N}}$ given by \cite[Theorem 2.1]{CD} has the following property:
	\[E(\gamma_j(\cdot,t))-E(\tilde{\gamma}_j(\cdot,t))\geq\frac{1}{2}\int_{S^2}|\nabla\tilde{\gamma}_j(\cdot,t)-\nabla\gamma_j(\cdot,t)|^2.\]
	\end{corollary}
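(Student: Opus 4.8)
The plan is to deduce the estimate by summing, over the balls on which a harmonic replacement is carried out, the one-ball energy-convexity inequality of Theorem \ref{convex} (Colding--Minicozzi, \cite[theorem 3.1]{CD}). As recorded in the discussion opening this appendix, the pulled-tight slice $\tilde\gamma_j(\cdot,t)$ is produced from $\gamma_j(\cdot,t)$ via \cite[theorem 2.1]{CD} and the harmonic replacement procedure of Theorem \ref{harmonicreplacem}: there is a finite collection of pairwise disjoint replacement regions $B'$, each a sub-ball of a ball $B\subset S^2$ with $\int_{B}|\nabla\gamma_j(\cdot,t)|^2<\epsilon_0$ (so that a fortiori $\int_{B'}|\nabla\gamma_j(\cdot,t)|^2<\epsilon_0$), on each of which $\tilde\gamma_j(\cdot,t)$ equals the energy-minimizing --- hence weakly harmonic --- map with the same boundary data as $\gamma_j(\cdot,t)$, while $\tilde\gamma_j(\cdot,t)\equiv\gamma_j(\cdot,t)$ on $S^2\setminus\bigcup B'$.

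First I would shrink the threshold $\epsilon_0$, if necessary, so that $\epsilon_0\le\epsilon_1$, where $\epsilon_1$ is the constant of Theorem \ref{convex}; since an energy-minimizing replacement does not increase energy, the restriction of $\tilde\gamma_j(\cdot,t)$ to each $B'$ is then weakly harmonic of energy at most $\int_{B'}|\nabla\gamma_j(\cdot,t)|^2<\epsilon_1$. Identifying $B'$ conformally with the Euclidean unit disk $B_1\subset\mathbb{R}^2$ --- legitimate because the two-dimensional Dirichlet energy is a conformal invariant --- Theorem \ref{convex}, applied with $u=\gamma_j(\cdot,t)|_{B'}$ and $v=\tilde\gamma_j(\cdot,t)|_{B'}$, gives
\[
\int_{B'}|\nabla\gamma_j(\cdot,t)|^2-\int_{B'}|\nabla\tilde\gamma_j(\cdot,t)|^2\ \geq\ \tfrac12\int_{B'}\bigl|\nabla\tilde\gamma_j(\cdot,t)-\nabla\gamma_j(\cdot,t)\bigr|^2 .
\]
Summing this over the finitely many disjoint $B'$ and adding the contribution of $S^2\setminus\bigcup B'$, on which both the energy difference and the integrand on the right vanish because $\tilde\gamma_j(\cdot,t)\equiv\gamma_j(\cdot,t)$ there, yields precisely the claimed inequality
\[
E(\gamma_j(\cdot,t))-E(\tilde\gamma_j(\cdot,t))\ \geq\ \tfrac12\int_{S^2}\bigl|\nabla\tilde\gamma_j(\cdot,t)-\nabla\gamma_j(\cdot,t)\bigr|^2 .
\]

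I do not anticipate a genuine obstacle here: the only substantive ingredient is the one-ball estimate of Theorem \ref{convex}, and the rest is bookkeeping over the disjoint replacement balls. The single point meriting a line of care --- and it is routine --- is to invoke the precise form of the pull-tight construction so that Theorem \ref{convex} applies ball-by-ball, i.e.\ that each replacement region is a ball on which $\tilde\gamma_j(\cdot,t)$ is weakly harmonic with energy below $\epsilon_1$; this is arranged by choosing the admissible energy threshold $\le\epsilon_1$ and using that harmonic replacement does not raise energy. Since the gradient difference vanishes identically off the replacement balls, the local inequalities add up with the asserted constant $\tfrac12$.
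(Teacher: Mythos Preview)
Your proposal is correct and follows essentially the same approach as the paper: invoke the description of the pull-tight construction as harmonic replacement on disjoint balls of small energy, then apply Theorem \ref{convex} ball-by-ball and sum, using that the maps agree off the replacement balls. The paper's proof is a two-sentence sketch of exactly this argument; your version simply spells out the bookkeeping (conformal identification with $B_1$, the threshold compatibility $\epsilon_0\le\epsilon_1$, and the summation) more explicitly.
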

	\begin{proof}
		The pulled-tight sweepout $\tilde{\gamma}_j(\cdot,t)$ is constructed by doing harmonic replacement on $\gamma_j(\cdot,t)$ over domain $\mathcal{B}\subset S^2$ with the energy of $\gamma_j(\cdot,t)$ on $\mathcal{B}$ at most $\epsilon_1>0$. Then by theorem \ref{convex} the corollary follows.
	\end{proof}
	With the above observation, we now state and prove theorem \ref{CDDD}:
	\begin{theorem}
		Given a closed manifold $M$ with dimension at least three, and a map $\beta\in\Omega$ representing a nontrivial class in $\pi_3(M)$, then for any sequence of sweepouts $\gamma_j\in\Omega_\beta$ with 
		\[\lim_{j\to\infty}\max_{s\in[0,1]}E(\gamma_j(\cdot,s))= W,\] 
		there exists a subsequence $\{i_j\}\to\infty$,  $t_{i_j}\in[0,1]$, and a collection of finitely many harmonic spheres $\{u_i\}_{i=0}^n$ such that
		\[d_B(\gamma_{i_j}(\cdot,t_{i_j}),\{u_i\}_{i=0}^n)<1/j,\]
		and
		\[\sum_{i=0}^nE(u_i)=W.\]
	\end{theorem}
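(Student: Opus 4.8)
The plan is to reduce the statement to Colding--Minicozzi's Theorem \ref{CD} by comparing the given minimizing sequence with its pulled-tight version and using the energy-drop estimate from Corollary \ref{pulltight}. First I would apply the pull-tight construction of \cite[Theorem 2.1]{CD} to the given minimizing sequence $\{\gamma_j(\cdot,t)\}_{j\in\mathbb{N}}$ to obtain $\{\tilde{\gamma}_j(\cdot,t)\}_{j\in\mathbb{N}}$. Since each $\tilde\gamma_j$ is obtained from $\gamma_j$ by harmonic replacement on disjoint closed balls of small energy, we have $E(\tilde\gamma_j(\cdot,t))\le E(\gamma_j(\cdot,t))$ for every $t$, so $\{\tilde\gamma_j\}$ is still a sweepout in $\Omega_\beta$ with $\limsup_j\max_t E(\tilde\gamma_j(\cdot,t))\le W$; combined with the lower bound $\max_t E(\tilde\gamma_j(\cdot,t))\ge W$ coming from the definition of the width, $\{\tilde\gamma_j\}$ is also a minimizing sequence. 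Next, by the compactness result for almost harmonic maps (Theorem \ref{almost harmonic}, together with Theorem \ref{harmonicreplacem} which guarantees the near-minimizing property (2) there on balls of energy $<\epsilon_0$), a min-max subsequence $\tilde\gamma_{i_j}(\cdot,t_{i_j})$ with $E(\tilde\gamma_{i_j}(\cdot,t_{i_j}))\to W$ bubble converges to a finite collection of harmonic spheres $\{u_i\}_{i=0}^n$ with $\sum_{i=0}^n E(u_i)=\lim_j E(\tilde\gamma_{i_j}(\cdot,t_{i_j}))=W$. In terms of the notation of Definition \ref{bubble norm}, this says $d_B(\tilde\gamma_{i_j}(\cdot,t_{i_j}),\{u_i\}_{i=0}^n)\to 0$, so after passing to a further subsequence we may assume $d_B(\tilde\gamma_{i_j}(\cdot,t_{i_j}),\{u_i\}_{i=0}^n)<1/(2j)$.

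The remaining task is to transfer this bubble convergence from $\tilde\gamma_{i_j}$ back to the original $\gamma_{i_j}$ at the same parameters $t_{i_j}$. Here Corollary \ref{pulltight} is the key input: it gives
\[E(\gamma_{i_j}(\cdot,t_{i_j}))-E(\tilde\gamma_{i_j}(\cdot,t_{i_j}))\ge\frac{1}{2}\int_{S^2}|\nabla\tilde\gamma_{i_j}(\cdot,t_{i_j})-\nabla\gamma_{i_j}(\cdot,t_{i_j})|^2.\]
Since $E(\gamma_{i_j}(\cdot,t_{i_j}))\le\max_t E(\gamma_{i_j}(\cdot,t))\to W$ and $E(\tilde\gamma_{i_j}(\cdot,t_{i_j}))\to W$, the left-hand side tends to $0$, hence $\|\nabla\tilde\gamma_{i_j}(\cdot,t_{i_j})-\nabla\gamma_{i_j}(\cdot,t_{i_j})\|_{L^2(S^2)}\to 0$. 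In particular $E(\gamma_{i_j}(\cdot,t_{i_j}))\to W=\sum_{i=0}^n E(u_i)$ as well. Now I would use that $d_B(\cdot,\{u_i\}_{i=0}^n)$ is controlled by a small $W^{1,2}$-perturbation: given the domains $\Omega_i$ and dilations $D_i$ realizing $d_B(\tilde\gamma_{i_j}(\cdot,t_{i_j}),\{u_i\}_{i=0}^n)<1/(2j)$, replacing $\tilde\gamma_{i_j}$ by $\gamma_{i_j}$ changes $\sum_i(\int_{\Omega_i}|\nabla\gamma_{i_j}(\cdot,t_{i_j})-\nabla(u_i\circ D_i)|^2)^{1/2}$ by at most $(n+1)^{1/2}\|\nabla\tilde\gamma_{i_j}(\cdot,t_{i_j})-\nabla\gamma_{i_j}(\cdot,t_{i_j})\|_{L^2}$ (inequalities (\ref{3.11})), changes $\int_{S^2\setminus\cup\Omega_i}|\nabla\gamma_{i_j}(\cdot,t_{i_j})|^2$ by a quantity going to zero (inequality (\ref{3.12}), using also that the energies converge), and leaves (\ref{3.13}) untouched since it involves only the $u_i$. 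Hence for $j$ large all three defining inequalities of $d_B(\gamma_{i_j}(\cdot,t_{i_j}),\{u_i\}_{i=0}^n)<1/j$ hold with the same $\Omega_i,D_i$, which is the conclusion.

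The main obstacle I anticipate is the bookkeeping in the last step: showing that a small $L^2$-gradient perturbation of a map that is $d_B$-close to $\{u_i\}$ is again $d_B$-close, \emph{with a quantitatively controlled loss}. The subtlety is that the ``error region'' inequality (\ref{3.12}) has a right-hand side of the form $3\epsilon^2+2(n+1)\epsilon E$, so one must check that after perturbation the error-region energy, which picks up a cross term $2(\int|\nabla\gamma_{i_j}|^2)^{1/2}(\int|\nabla\tilde\gamma_{i_j}-\nabla\gamma_{i_j}|^2)^{1/2}$ there, still fits under the slightly larger allowed bound $3(1/j)^2+2(n+1)(1/j)W$ — this works precisely because $E(\gamma_{i_j}(\cdot,t_{i_j}))\to W$ and the perturbation is $o(1)$, but the constants have to be tracked carefully, and one should start from $d_B<1/(Cj)$ for a suitable fixed constant $C=C(n)$ rather than $1/(2j)$ to leave room. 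Everything else (the pull-tight step, the minimizing property of $\tilde\gamma_j$, the application of Theorem \ref{almost harmonic}) is a direct citation of results already recorded above.
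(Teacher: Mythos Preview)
Your approach is essentially the same as the paper's: pull tight via harmonic replacement to obtain $\tilde\gamma_j$, apply the compactness theorem for almost harmonic maps (Theorem~\ref{almost harmonic}) to a min-max slice of $\tilde\gamma_j$, then use Corollary~\ref{pulltight} to transfer the bubble convergence back to $\gamma_j$ through the $W^{1,2}$-closeness. Your final paragraph on how a small $L^2$-gradient perturbation preserves $d_B$-closeness is in fact more explicit than the paper, which simply asserts the conclusion.

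There is one genuine gap, though. To invoke Theorem~\ref{almost harmonic} you must verify \emph{both} hypotheses (1) and (2). You address (2) via Theorem~\ref{harmonicreplacem}, but you never check (1), namely $\text{Area}(\tilde\gamma_{i_j}(\cdot,t_{i_j}))>E(\tilde\gamma_{i_j}(\cdot,t_{i_j}))-1/j$. Simply picking $t_{i_j}$ so that $E(\tilde\gamma_{i_j}(\cdot,t_{i_j}))\to W$ does not by itself force the area and energy to be close. The paper handles this by choosing $t_j$ to \emph{maximize the area} of $\tilde\gamma_j(\cdot,\cdot)$ and then invoking Proposition~\ref{A=E} ($W_A=W$) to obtain the chain
\[
W\le \text{Area}(\tilde\gamma_j(\cdot,t_j))\le E(\tilde\gamma_j(\cdot,t_j))\le E(\gamma_j(\cdot,t_j))\le \max_s E(\gamma_j(\cdot,s))<W+\tfrac{1}{2j},
\]
which simultaneously gives condition (1), the energy convergence $E(\tilde\gamma_j(\cdot,t_j))\to W$, and the smallness of $E(\gamma_j(\cdot,t_j))-E(\tilde\gamma_j(\cdot,t_j))$ needed later. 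Once you insert this choice of $t_j$ and the appeal to $W_A=W$, your proof goes through.
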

	\begin{proof}
		Given a minimizing sequence $\{\gamma_j(\cdot,t)\}_{j\in\mathbb{N}}$ such that
		\begin{equation}\label{pt}
		\max_{s\in[0,1]}E(\gamma_j(\cdot,s))<W+1/2j.
		\end{equation}
		Applying theorem \ref{harmonicreplacem} to $\gamma_j$ gives a sequence $\tilde{\gamma}_j\in\Omega_\beta$ with 
		\[E(\tilde{\gamma}_j(\cdot,t))\leq E(\gamma_j(\cdot,t)).\]
	We choose $t_j\in[0,1]$ so that $\text{Area}(\tilde{\gamma}_j(\cdot,t_j))=\max_{s\in[0,1]}\text{Area}(\tilde{\gamma}_j(\cdot,s))$. By proposition \ref{A=E} we have:
\begin{equation}\label{68}
\begin{split}
    W\leq\text{Area}(\tilde{\gamma}_{j}(\cdot,t_j))
    &\leq E(\tilde{\gamma}_{j}(\cdot,t_j))\\
    &\leq E(\gamma_{j}(\cdot,t_j))\\
    &\leq \max_{s\in[0,1]}E(\gamma_j(\cdot,s))<W+1/2j,
\end{split}    
\end{equation}
thus imply that $\text{Area}(\tilde{\gamma}_{j}(\cdot,t_j))>E(\tilde{\gamma}_{j}(\cdot,t_j))-1/j$ and if $\mathcal{B}$ is any finite collection of disjoint  closed balls in $S^2$ with $\int_{\mathcal{B}}|\nabla\gamma(\cdot,t)|^2<\epsilon_0$ and $v:\cup_{B\in\mathcal{B}}\frac{1}{8}B\to M$ is an energy minimizing map equal to $\gamma(\cdot,t)$ on $\cup_{B\in\mathcal{B}}\frac{1}{8}\partial B$, then
\begin{align*}
\int_{\cup_{B\in\mathcal{B}}\frac{1}{8}B}|\nabla\tilde{\gamma}(\cdot,t)-\nabla v|^2\leq&\Phi(E(\tilde{\gamma}(\cdot,t))-E(\gamma(\cdot,t)))\\
=&\Phi(1/2j).
\end{align*}
(see theorem \ref{harmonicreplacem}). So $\{\gamma_j(\cdot,t_j)\}_{j\in\mathbb{N}}$ is an almost harmonic sequence, and by theorem \ref{almost harmonic}, there exists a collection of finitely many harmonic spheres $\{u_i\}_{i=0}^n$ and subsequence $i_j\to\infty$ so that 
	\[d_B(\gamma_{i_j}(\cdot,t_{i_j}),\{u_i\}_{i=0}^n)<1/2j.\]
		By corollary \ref{pulltight} and (\ref{68}) we have that 
	\[\frac{1}{2}\int_{S^2}|\nabla\tilde{\gamma}_j(\cdot,t_j)-\nabla\gamma_j(\cdot,t_j)|^2\leq E(\tilde{\gamma}_j(\cdot,t_j))-E(\gamma_j(\cdot,t_j))<1/j.\]
		Thus we have 
		\[d_B(\tilde{\gamma}_{i_j}(\cdot,t_{i_j}),\{u_i\}_{i=0}^n)\to 0,\quad j\to\infty,\]
		which is the desired result.
	\end{proof}

\section{}\label{F2}
We are going to prove in this section that for a closed manifold of dimension at least three, with a generic metric, then the set of all harmonic spheres up to equivalent class with bounded energy is countable (proposition \ref{countable}). This result is expected, given that we know a similar result holds for minimal embedded hypersurfaces. Namely:
\begin{theorem}[\cite{BS}]\label{BenSharp}
Given a closed manifold $(M^n,g)$, $3\leq n\leq 7$, with $g$ generic, the set of embedded minimal hypersurfaces with bounded area and index is finite.
	\end{theorem}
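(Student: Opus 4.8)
The plan is to establish finiteness by combining a curvature-concentration compactness theorem for minimal hypersurfaces of bounded index with White's bumpy metric theorem. Fix an area bound $\Lambda_0$ and an index bound $I_0$, and suppose for contradiction that there is a sequence of pairwise distinct closed embedded minimal hypersurfaces $\Sigma_j\subset(M^n,g)$ with $\mathrm{Area}(\Sigma_j)\le\Lambda_0$ and $\mathrm{Index}(\Sigma_j)\le I_0$; it suffices to show that a subsequence is eventually constant. \textbf{Step 1 (curvature estimates off a controlled bad set).} The analytic engine is the Schoen--Simon curvature estimate for \emph{stable} embedded minimal hypersurfaces satisfying an area bound. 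Since $\mathrm{Index}(\Sigma_j)\le I_0$, each $\Sigma_j$ is stable on the complement of at most $I_0$ small geodesic balls, and a point-selection argument in the spirit of Sharp's work promotes this to the existence of a finite set $\mathcal{Y}_j\subset M$ of at most $C(n,I_0)$ points such that $\sup_{\Sigma_j\cap K}|A_{\Sigma_j}|$ is bounded uniformly in $j$ for every compact $K\subset M\setminus\mathcal{Y}_j$; passing to a subsequence, $\mathcal{Y}_j$ converges to a finite set $\mathcal{Y}$ with at most $C(n,I_0)$ points.

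\textbf{Step 2 (smooth subconvergence and removability).} With uniform $|A|$ bounds on $M\setminus\mathcal{Y}$ one writes the $\Sigma_j$ locally as finitely many minimal graphs with uniform $C^{1,\alpha}$ control, and elliptic estimates give, along a further subsequence, locally smooth convergence on $M\setminus\mathcal{Y}$ to a smooth embedded minimal hypersurface $\Sigma_\infty$, possibly with an integer multiplicity $m$ bounded in terms of $\Lambda_0$. An analysis based on monotonicity and Allard's regularity theorem at the points of $\mathcal{Y}$ shows that the limit varifold is regular there, so $\Sigma_\infty$ is closed and embedded, and a no-neck energy accounting at the concentration points shows that no area is lost, giving $\mathrm{Area}(\Sigma_\infty)\le\Lambda_0$; lower semicontinuity of the index under this convergence gives $\mathrm{Index}(\Sigma_\infty)\le I_0$.

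\textbf{Step 3 (bumpiness forces stabilization).} For a bumpy metric $g$, which is generic by White's theorem, every closed embedded minimal hypersurface is nondegenerate, i.e.\ carries no nontrivial Jacobi field. Nondegeneracy of $\Sigma_\infty$ implies that it is isolated in the space of closed minimal hypersurfaces and that no sequence of minimal hypersurfaces can converge to it with multiplicity $m\ge 2$, since such a limit would produce a nontrivial Jacobi field on $\Sigma_\infty$; hence the convergence of Step 2 is multiplicity one and graphical, so $\Sigma_j=\Sigma_\infty$ for all large $j$, contradicting that the $\Sigma_j$ are pairwise distinct. This proves the set is finite.

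\textbf{Main obstacle.} The delicate part is Step 2 near the bad set $\mathcal{Y}$: one must control the bubbles obtained by rescaling $\Sigma_j$ at a point of $\mathcal{Y}$ --- each blow-up limit being a complete minimal hypersurface in $\mathbb{R}^{n+1}$ of finite index, hence of finite total curvature and of classified type --- and show that the necks joining the bubbles to the main sheet carry no area in the limit. This is the technical core of the argument, and it is precisely here that the index bound (capping the number and complexity of bubbles) and the bumpy hypothesis (excluding continuous families and higher multiplicity) are indispensable.
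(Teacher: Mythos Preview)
The paper does not prove this theorem: it merely quotes the result from \cite{BS} as context and motivation for its own countability result (Proposition~\ref{countable}) about harmonic spheres, and immediately notes that harmonic spheres are only branched minimal immersions, so Theorem~\ref{BenSharp} does not apply. There is therefore no ``paper's own proof'' to compare against.

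That said, your sketch is broadly the correct outline of Sharp's argument in \cite{BS}: compactness for embedded minimal hypersurfaces with bounded area and index (curvature estimates away from finitely many points via the index bound plus Schoen--Simon, smooth graphical convergence with integer multiplicity away from a finite set, and removable singularities across that set), followed by White's bumpy metric theorem to rule out nontrivial Jacobi fields and hence force multiplicity one and eventual stabilization of the sequence. One point to tighten: in Step~3 you assert that multiplicity $m\ge 2$ would produce a nontrivial Jacobi field on $\Sigma_\infty$. This is correct but requires an argument---in the two-sided case one normalizes the height difference between adjacent sheets and passes to a limit to obtain a positive Jacobi field; in the one-sided case one passes to the double cover. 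Also, your ``no-neck energy accounting'' is not needed for area: varifold convergence plus the monotonicity formula already gives the area bound on the limit, and removability at $\mathcal{Y}$ follows from Allard regularity once the density is shown to be integer there. These are refinements rather than gaps; the strategy is sound and matches the cited reference.
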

However, harmonic spheres with bounded energy are merely branched minimal immersions with bounded area. Proposition \ref{countable} doesn't follow from theorem \ref{BenSharp}. The proof of Proposition \ref{countable} brings together several important results like bumpy metric theorem for minimal surface \cite[theorem 2.1]{BW}, bumpy metric theorem for harmonic map \cite{JD} and comparison between second variation of area and energy of a minimal surface \cite{Mario}. We first recall some basic properties for harmonic maps like tension field and Jacobi field, state bumpy metric theorem for harmonic maps and minimal submanifold, and then prove proposition \ref{countable}.

	\begin{theorem}[A Priori Estimate \cite{SU} Main Estimate 3.2]\label{regularity}
		Given $\Sigma$ and $M$, there exist $\epsilon_{su}>0$ and $\rho>0$ such that if $r_0<\rho$, $u:\Sigma\to M$ is harmonic and
		\[\int_{B_{r_0}^{\Sigma}(y)}|\nabla u|^2<\delta\epsilon_{su},\]
		then
		\[|\nabla u|^2(y)\leq\frac{\delta}{r_0^2}.\]
	\end{theorem}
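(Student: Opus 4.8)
The plan is to first reduce the statement to a scale-invariant estimate on the flat unit disk. Since $r_0<\rho$, a geodesic ball $B^{\Sigma}_{r_0}(y)$ can be identified, via a geodesic normal (or conformal) coordinate chart, with a Euclidean disk on which the pulled-back metric is uniformly $C^1$-close to the flat one; after the dilation $x\mapsto r_0x$ the quantity $\int_{B^{\Sigma}_{r_0}(y)}|\nabla u|^2$ becomes, up to a fixed factor, $\int_{B_1}|\nabla v|^2$, and $r_0^2|\nabla u|^2(y)$ becomes, up to a fixed factor, $|\nabla v|^2(0)$, where $v$ is the rescaled map. So it suffices to prove the model estimate: there are constants $\epsilon_0>0$ and $C$, depending only on bounds for the geometry of $\Sigma$ and $M$, so that every harmonic $v\colon B_1\subset\mathbb R^2\to M$ with $\int_{B_1}|\nabla v|^2<\epsilon_0$ satisfies $|\nabla v|^2(0)\le C\int_{B_1}|\nabla v|^2$. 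Taking $\epsilon_{su}$ small (so $\epsilon_{su}\le\epsilon_0$ and $C\epsilon_{su}\le 1$ after accounting for the metric-distortion factors) then gives $|\nabla u|^2(y)\le C\delta\epsilon_{su}\,r_0^{-2}\le\delta r_0^{-2}$, for $\delta$ in the relevant range.

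For the model estimate I would use the Bochner identity for harmonic maps: writing $e=|\nabla v|^2$, the Eells--Sampson computation, together with the curvature bounds of $M$ (and the $C^1$-closeness of the domain metric to the flat one), gives $\Delta e\ge -C_0(e^2+e)$ on $B_1$ for a constant $C_0$ depending only on the ambient geometry. Next I would run a Schoen-type point-selection: let $h(\sigma)=\max_{\overline{B_\sigma}}e$ for $\sigma\in[0,1)$, choose $\sigma_0$ maximizing $(1-\sigma)^2h(\sigma)$ and $x_0\in\overline{B_{\sigma_0}}$ with $e(x_0)=h(\sigma_0)$, and set $r_0=\tfrac12(1-\sigma_0)$ and $E_0=r_0^2e(x_0)$. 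Maximality forces $e\le 4E_0/r_0^2$ on $B_{r_0}(x_0)$ and $e(0)\le\sup_{\overline{B_{1/2}}}e\le 16E_0$; after the rescaling $w(x)=r_0^2e(x_0+r_0x)$ one gets $w\ge0$ on $B_1$ with $w(0)=E_0$, $\sup_{B_1}w\le 4E_0$, $\int_{B_1}w=\int_{B_{r_0}(x_0)}e<\epsilon_0$, and $\Delta w\ge -C_0(w^2+r_0^2w)\ge -C_0(4E_0+1)\,w$. Everything now comes down to bounding $E_0$ by $C\int_{B_1}e$.

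To bound $E_0$ I would argue by contradiction: if no such bound held, there would be harmonic maps $v_k$ on $B_1$ with $\theta_k:=\int_{B_1}|\nabla v_k|^2\le\epsilon_0$ and $E_0^{(k)}/\theta_k\to\infty$, and I split into two cases. If $E_0^{(k)}$ stays bounded, then $\Delta w_k\ge -C'w_k$ with a \emph{fixed} constant $C'$, so the standard sub-mean-value (De Giorgi--Nash--Moser) inequality gives $E_0^{(k)}=w_k(0)\le C''\int_{B_1}w_k\le C''\theta_k$, contradicting $E_0^{(k)}/\theta_k\to\infty$. If $E_0^{(k)}\to\infty$ along a subsequence, rescale the selection balls to unit size and then by $\mu_k:=E_0^{(k)}\to\infty$ to normalize the gradient at the center; this produces harmonic maps $\bar v_k$ on $B_{\sqrt{\mu_k}}$ with $|\nabla\bar v_k|^2\le 4$, $|\nabla\bar v_k|^2(0)=1$, and $\int_{B_{\sqrt{\mu_k}}}|\nabla\bar v_k|^2\le\theta_k\le\epsilon_0$. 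Bounded gradient makes $\Delta\bar v_k=-A(\nabla\bar v_k,\nabla\bar v_k)$ bounded, so interior elliptic regularity and bootstrapping give uniform $C^{2,\alpha}_{\mathrm{loc}}$ bounds; a subsequence converges in $C^2_{\mathrm{loc}}$ to a harmonic map $\bar v_\infty\colon\mathbb R^2\to M$ with $|\nabla\bar v_\infty|^2(0)=1$ and $\int_{\mathbb R^2}|\nabla\bar v_\infty|^2\le\epsilon_0$. By the removable-singularity theorem $\bar v_\infty$ extends to a harmonic sphere of energy at most $\epsilon_0$; choosing $\epsilon_0$ below the energy gap for harmonic spheres into $M$ (positive, e.g.\ by conformality of harmonic spheres together with the monotonicity formula) forces $\bar v_\infty$ to be constant, contradicting $|\nabla\bar v_\infty|^2(0)=1$. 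Hence $E_0\le C\int_{B_1}e$, and then $e(0)\le 16E_0\le 16C\int_{B_1}e$, which is the model estimate.

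The main difficulty is the conformally critical nature of the problem in dimension two: $\int|\nabla v|^2$ is exactly scale invariant, so the nonlinearity $A(\nabla v,\nabla v)\sim|\nabla v|^2$ in $\Delta v=-A(\nabla v,\nabla v)$ cannot be treated perturbatively by naive elliptic estimates. The three devices that resolve it are (i) the point-selection, which upgrades a pointwise bound at a single point to uniform control of the energy density on a whole ball; (ii) the smallness hypothesis $\int_{B_1}|\nabla v|^2<\epsilon_0$, which keeps the rescaled energy negligible and makes the blow-up limit trivial; and (iii) the energy gap for harmonic spheres, which rules out a nonconstant bubble in the blow-up. The only secondary, routine point is the metric-distortion bookkeeping in the first reduction, which is absorbed by taking $\rho$ small.
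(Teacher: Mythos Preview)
The paper does not give its own proof of this statement: it is quoted verbatim as Sacks--Uhlenbeck's Main Estimate and only followed by a remark on bootstrapping regularity. So there is no ``paper's proof'' to compare against; what you have written is a self-contained argument for a result the author simply imports.

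Your outline is essentially Schoen's proof of the $\epsilon$-regularity theorem (Bochner inequality for the energy density, Choi--Schoen style point selection, then a sub-mean-value inequality), and the reduction to the flat model via normal coordinates and rescaling is the standard one. The logic in Case~1 is fine: bounded $E_0^{(k)}$ gives a uniform subsolution constant, and the mean-value inequality yields $E_0^{(k)}\le C''\int_{B_1}w_k\le C''\theta_k$, contradicting $E_0^{(k)}/\theta_k\to\infty$.

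There is, however, a circularity concern in your Case~2. You pass to a blow-up limit $\bar v_\infty:\mathbb{R}^2\to M$, invoke the removable-singularity theorem to extend it to $S^2$, and then appeal to the energy gap for harmonic spheres. But the Sacks--Uhlenbeck removable-singularity theorem is itself typically proved \emph{using} the $\epsilon$-regularity estimate you are trying to establish (one applies it on small annuli near the puncture to get H\"older continuity). The fix is simple and in fact gives a shorter argument: do not pass to the limit at all. For each large $k$ the rescaled map $\bar v_k$ is already defined on $B_{\sqrt{\mu_k}}\supset B_1$ with $|\nabla\bar v_k|^2(0)=1$, $|\nabla\bar v_k|^2\le 4$, and $\Delta|\nabla\bar v_k|^2\ge -4C_0|\nabla\bar v_k|^2$ on $B_1$; the sub-mean-value inequality then gives $1\le C'''\int_{B_1}|\nabla\bar v_k|^2\le C'''\epsilon_0$, a contradiction for $\epsilon_0$ small. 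This is exactly how the standard proof rules out $E_0>1$ directly, without any compactness, removable singularities, or energy-gap input. With that modification your argument is complete.
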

\begin{remark}
Once we know that $\nabla u \in L^{\infty}_{\text{loc}}$, it then follows from equation (\ref{whel}) that $\Delta u\in L^{\infty}_{\text{loc}}$, which implies by standard estimates on the inverse of the Laplacian that $u\in W^{2,p}_{\text{loc}}$ for all $p<\infty$. Hence we deduce that $\Delta u\in W^{1,p}_{\text{loc}}$ and hence $u\in W^{3,p}_{\text{loc}}$ for all $p>0$. We can then repeat this argument to show that $u\in W^{r,p}_{\text{loc}}$, $\forall r$, and so the smoothness of the solution follows. 
\end{remark}
	
\begin{definition}[tension field] We call the tension field of $f$ to be the following
\[\tau(f):=\text{trace}\nabla df.\]
\end{definition}
\begin{remark}
Intrinsically, the Euler-Lagrange equation for energy is
\begin{equation}
    \tau(f)=0.
\end{equation}
\end{remark}
\begin{definition}[Jocabi operator]
For a harmonic map $f:M\to N$, we define the Jacobi operator of $f$ to be
\begin{equation}\label{jocabi}
\mathcal{J}_f(V):=-\Delta V-\text{trace}_MR^N(V,df)df,
\end{equation}
here $\Delta$ is the Laplacian on sections of $f^{-1}TN$ given in local coordinates on $M$ by 
\[\Delta=h^{\alpha\beta}(f^*\nabla^N)_{\frac{\partial}{\partial x^\alpha}}(f^*\nabla^N)_{\frac{\partial}{\partial x^\beta}},\]
so we have 
\[I(V,W)=\int_M\langle\mathcal{J}_f(V),W\rangle dM.\]
\end{definition}
Let
\begin{equation}\label{2.16}
    \begin{split}
        &f_{st}(x)=f(x,s,t),\\
        &f:M\times(-\epsilon,\epsilon)\times(-\epsilon,\epsilon)\to N
    \end{split}
\end{equation}
be a smooth family of maps between Riemannian manifolds of finite energy. $M$ may have nonempty boundary, in which case we require $f(x,s,t)=f(x,0,0)$ for all $x\in\partial M$ and all $s,t$.
\begin{proposition}\label{prop7}
For a smooth family of maps $f_{st}:M\to N$ defined as \ref{2.16}, with
\begin{align*}
    V:=&\frac{\partial}{\partial s}\at[\Big]{s=t=0}f_{st}\\
    W:=&\frac{\partial}{\partial t}\at[\Big]{s=t=0}f_{st}.
\end{align*}
Let $f_{00}=f$ be a smooth harmonic map. We have the Jacobi operator $\mathcal{J}_f$ defined as (\ref{jocabi}) to be the following:
\[\mathcal{J}_f(V)=-\frac{\partial}{\partial s}\at[\Big]{s=0}\tau(f_{s0}).\]
\end{proposition}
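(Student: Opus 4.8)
The plan is to establish the identity by a direct local computation: differentiate the coordinate expression for the tension field in $s$ and commute covariant derivatives on the pullback bundle. Fix local coordinates $(x^\alpha)$ on $M$, with metric coefficients $h^{\alpha\beta}$ and Christoffel symbols $\Gamma^\gamma_{\alpha\beta}$ (all independent of $s,t$), write $\partial_\alpha f:=\partial f/\partial x^\alpha$, $\partial_s f:=\partial f/\partial s$, and regard these as sections of the pullback bundle $f^{-1}TN$ with the pulled-back Levi-Civita connection, which I denote $\nabla$. Since the coordinate fields $\partial_\alpha$ and $\partial_s$ commute, torsion-freeness of $\nabla^N$ gives the symmetry
\[
\nabla_{\partial_s}(\partial_\alpha f)=\nabla_{\partial_\alpha}(\partial_s f),
\]
and the curvature of the pullback connection satisfies
\[
\nabla_{\partial_s}\nabla_{\partial_\alpha}Z-\nabla_{\partial_\alpha}\nabla_{\partial_s}Z=R^N(\partial_s f,\partial_\alpha f)Z
\]
for every section $Z$ of $f^{-1}TN$.

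Next I would write $\tau(f_{s0})=h^{\alpha\beta}\big(\nabla_{\partial_\alpha}(\partial_\beta f)-\Gamma^\gamma_{\alpha\beta}\,\partial_\gamma f\big)$, apply $\nabla_{\partial_s}$, and evaluate at $s=t=0$, using that $h^{\alpha\beta}$ and $\Gamma^\gamma_{\alpha\beta}$ do not depend on $s$. The symmetry turns $\nabla_{\partial_s}(\partial_\beta f)$ and $\nabla_{\partial_s}(\partial_\gamma f)$ into $\nabla_{\partial_\beta}V$ and $\nabla_{\partial_\gamma}V$, and the curvature identity converts $\nabla_{\partial_s}\nabla_{\partial_\alpha}(\partial_\beta f)$ into $\nabla_{\partial_\alpha}\nabla_{\partial_\beta}V+R^N(V,\partial_\alpha f)\partial_\beta f$; collecting the terms gives
\[
\frac{\partial}{\partial s}\Big|_{s=0}\tau(f_{s0})=h^{\alpha\beta}\big(\nabla_{\partial_\alpha}\nabla_{\partial_\beta}V-\Gamma^\gamma_{\alpha\beta}\nabla_{\partial_\gamma}V\big)+h^{\alpha\beta}R^N(V,\partial_\alpha f)\partial_\beta f=\Delta V+\text{trace}_M R^N(V,df)df,
\]
which is exactly $-\mathcal{J}_f(V)$ by definition (\ref{jocabi}), with $\Delta$ the connection Laplacian on sections of $f^{-1}TN$. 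I note that harmonicity of $f$ is not actually used for this pointwise identity; it is merely the only case we need. Alternatively, one could differentiate the first variation formula $\frac{d}{dt}E(f_{st})=-\int_M\langle\tau(f_{st}),\partial_t f_{st}\rangle\,dM$ once more in $s$, discard the term containing $\tau(f_{00})=0$ (this is where harmonicity enters), compare the result with $I(V,W)=\int_M\langle\mathcal{J}_f(V),W\rangle\,dM$, and conclude from the arbitrariness of $W$ (with $W|_{\partial M}=0$).

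There is no substantive obstacle here; this is the classical derivation of the Jacobi equation for harmonic maps and the work is entirely bookkeeping on the pullback bundle. The one point requiring genuine care is consistency of conventions: that the operator $\Delta$ in (\ref{jocabi}) is read as the rough Laplacian $h^{\alpha\beta}\big(\nabla_{\partial_\alpha}\nabla_{\partial_\beta}-\nabla_{\nabla^M_{\partial_\alpha}\partial_\beta}\big)$ on sections of $f^{-1}TN$, and that the sign of the curvature term in $\mathcal{J}_f$ matches the convention used when commuting $\nabla_{\partial_s}$ past $\nabla_{\partial_\alpha}$. Computing in geodesic normal coordinates on $M$ at the base point makes the Christoffel contributions vanish and isolates exactly these two checks.
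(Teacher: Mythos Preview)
Your proposal is correct. Your primary route---the direct local computation on the pullback bundle, commuting $\nabla_{\partial_s}$ past $\nabla_{\partial_\alpha}$ via torsion-freeness and the curvature identity---is not what the paper does; the paper instead takes exactly the variational shortcut you describe as your ``alternative'': it writes $\partial_s\partial_t E(f_{st})|_{s=t=0}$ as $-\int_M\langle\partial_s\tau(f_{s0}),W\rangle\,dM$ (using $\tau(f)=0$ to kill the other term) and identifies this with $I(V,W)=\int_M\langle\mathcal{J}_f(V),W\rangle\,dM$, then concludes by arbitrariness of $W$. Your direct computation has the merit of being a pointwise identity that does not require harmonicity, and it makes the sign and Laplacian conventions fully explicit; the paper's argument is shorter but implicitly leans on the standard second variation formula $\partial_s\partial_t E|_{0}=I(V,W)$, so its last equality is really citing that formula rather than proving it.
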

\begin{proof}
The proposition follows from the computation below:
\begin{align*}
\frac{\partial^2}{\partial s\partial t}\at[\Big]{s=t=0}E(f_{st})&=\frac{\partial}{\partial s}\at[\Big]{s=0}\int_M\langle df_{st},\nabla_{\frac{\partial}{\partial t}}df_{st}\rangle\at[]{t=0}\\
    &=-\frac{\partial}{\partial s}\at[\Big]{s=0}\int_M\langle \tau(f_{s0}),\frac{\partial}{\partial t}\at[\Big]{t=0}f_{st}\rangle\\
    &=\int_M\langle-\frac{\partial}{\partial s}\at[\Big]{s=0}\tau(f_{s0}),\frac{\partial}{\partial t}\at[\Big]{s=t=0}f_{st}\rangle dM+\int_M\langle\tau(f_{00}),-\frac{\partial^2}{\partial s\partial t}\at[\Big]{s=t=0}f_{st}\rangle dM\\
    &=\int_M\langle-\frac{\partial}{\partial s}\at[\Big]{s=0}\tau(f_{s0}),W\rangle dM\\
    &=\int_M\langle\mathcal{J}_f(V),W\rangle dM.
\end{align*}
The second to last equality follows by $f_{00}$ is harmonic. Then it implies that 
\[\mathcal{J}_f(V)=-\frac{\partial}{\partial s}\at[\Big]{s=0}\tau(f_{s0}).\]
\end{proof}
\subsection{Bumpy Metric Theorem for Minimal Submanifolds}
Now we define generic metric for a specific metric space. For $k\in\mathbb{N}$, the space of $L^2_k$ Riemannian metrics on $M$ simply denotes an open set of the Hilbert space of $L^2_k$-sections of the second symmetric power of $T^*M$.
\begin{definition}[Generic Metrics, \cite{JD}]
By a generic Riemannian metric on a smooth manifold $M$ we mean a Riemannian metric that belongs to a countable intersection of open dense subsets of the spaces of $L^2_k$ Riemannian metrics on $M$ with the $L^2_k$ topology, for some choice of $k\in\mathbb{N}$, $k\geq 2$.
\end{definition}
\begin{remark}
Notice that \textit{generic metric} always implies a countable intersection of open dense subsets of the metric space. For geodesics and harmonic maps the metric space is $L^2_k$ \cite{JD}, and for minimal submanifolds it's $C^{q}$ Riemannian metric for $q\geq 3$, \cite{BW}. 
\end{remark}

\begin{definition}[Bumpy Metric \cite{BW}]\label{bwbumpy}
 A metric $g$ on $M$ is called \textit{bumpy} if there is no smooth immersed minimal submanifold (minimal with respect to $g$) with a non-trivial Jacobi field.  
\end{definition}
\begin{theorem}[Bumpy Metric Theorem for Minimal Submanifold, \cite{BW}]\label{bmt}
If $M$ is a compact manifold, then for a generic choice of metric of $C^q$ $g$ on $M$ ($q\geq 3$), there are no minimal submanifolds with nonzero normal Jacobi fields. That is, each minimal submanifold has nullity $0$.
\end{theorem}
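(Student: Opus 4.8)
The plan is to follow the transversality argument of White \cite{BW}: realize minimal submanifolds as the zero set of a ``mean curvature'' section of an infinite-dimensional bundle, treat the metric as a parameter, and use the Sard--Smale theorem to show that the metrics for which some minimal submanifold carries a nontrivial normal Jacobi field form a meager set, so that their complement (the bumpy metrics) is residual, hence generic in the sense defined above.

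Concretely, fix a compact model $\Sigma^{k}$ and consider the Banach manifold $\mathcal{B}$ of $C^{q}$ immersions $\iota:\Sigma\to M$ modulo the reparametrization group $\mathrm{Diff}(\Sigma)$, together with the Banach manifold $\mathcal{G}$ of $C^{q}$ metrics on $M$. For a pair $(\iota,g)$ one has the mean curvature vector $H(\iota,g)$, a section of the normal bundle $\nu_{\iota}=\iota^{*}TM/d\iota(T\Sigma)$; this defines a smooth map $\Phi$ from an open subset of $\mathcal{B}\times\mathcal{G}$ into the associated Banach bundle, whose zero set consists exactly of the pairs for which $\iota(\Sigma)$ is $g$-minimal. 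For fixed $g$, the partial linearization $D_{\iota}\Phi$ at a zero is, up to identifications, the Jacobi operator $\mathcal{J}$ of $\iota(\Sigma)$, a self-adjoint elliptic second-order operator on sections of $\nu_{\iota}$, hence Fredholm of index $0$; consequently, once transversality is established, $\Phi^{-1}(0)$ is a Banach manifold and the projection $\pi:\Phi^{-1}(0)\to\mathcal{G}$ is a Fredholm map of index $0$ whose critical points are precisely the minimal submanifolds with $\ker\mathcal{J}\neq 0$, i.e.\ with a nontrivial normal Jacobi field.

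The crux is to prove that $0$ is a regular value of $\Phi$, i.e.\ that $D\Phi=(D_{\iota}\Phi,D_{g}\Phi)$ is onto at every zero. Since $\mathcal{J}$ is self-adjoint Fredholm of index $0$, its cokernel is $\ker\mathcal{J}$, so it suffices to show: for every nonzero Jacobi field $\psi$ along $\iota(\Sigma)$ there is a variation $h$ of the metric with $\int_{\Sigma}\langle \left.\tfrac{d}{dt}\right|_{t=0}H(\iota,g+th),\,\psi\rangle\neq 0$. Here one uses the classical first-variation formula expressing $\left.\tfrac{d}{dt}\right|_{t=0}H$ as a first-order differential expression in $h$ built from the second fundamental form and the ambient metric. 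One then picks a point $p\in\Sigma$ with $\psi(p)\neq 0$ at which $\iota^{-1}(\iota(p))=\{p\}$ (such $p$ exist: the zero set of $\psi$ has empty interior by unique continuation for $\mathcal{J}\psi=0$, and the self-intersection locus of an immersion is nowhere dense), supports $h$ in a small ball around $\iota(p)$ where $\iota$ is embedded, and chooses $h$ so that the resulting integrand is positive; this gives the required nonvanishing pairing, hence surjectivity. By Sard--Smale the critical values of $\pi$ form a meager subset of $\mathcal{G}$, so its complement $\mathcal{G}^{*}_{\Sigma}$ --- the metrics admitting no $g$-minimal immersion of $\Sigma$ with a normal Jacobi field --- is residual.

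It remains to pass from one fixed model $\Sigma$ to all minimal submanifolds simultaneously. By the monotonicity formula a minimal submanifold of $(M,g)$ with area at most $A$ has at most $N(A)$ connected components, each of controlled topology, so the moduli space decomposes into countably many pieces indexed by area bounds $A_{m}\to\infty$ and by topological type of the components, and on each such piece the regular-value set for $\pi$ is open and dense in $\mathcal{G}$. Intersecting these countably many open dense sets yields a residual set of metrics for which no minimal submanifold at all carries a nontrivial normal Jacobi field, which is the asserted bumpy (equivalently, nullity-zero) genericity. The main obstacle is the surjectivity step: one must verify that metric perturbations alone can cancel an arbitrary normal Jacobi field, which requires both the precise first-variation formula for the mean curvature with respect to the metric and the careful choice of a support point that avoids self-intersections while meeting the (possibly intricate) zero set of $\psi$; the Fredholm and Sard--Smale bookkeeping, though lengthy, is routine.
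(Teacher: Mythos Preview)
The paper does not supply its own proof of this statement; Theorem~\ref{bmt} is quoted from White \cite{BW} and used as a black box in the appendix. There is therefore no ``paper's proof'' to compare against. Your proposal is a reasonable outline of White's original transversality argument, and the Fredholm/Sard--Smale structure you describe is exactly the machinery of \cite{BW}.

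That said, one step in your sketch is incorrect as written. In the final paragraph you claim that, by the monotonicity formula, a minimal submanifold of area at most $A$ has components ``of controlled topology''. Monotonicity bounds the number of sheets through a point and hence the number of connected components, but it gives no control whatsoever on the topological type of each component: minimal submanifolds of uniformly bounded area can have arbitrarily complicated topology. The correct (and simpler) way to obtain countability is to observe that there are only countably many diffeomorphism types of closed smooth manifolds $\Sigma$; for each fixed $\Sigma$ your argument yields a residual set $\mathcal{G}^{*}_{\Sigma}\subset\mathcal{G}$, and the desired bumpy set is $\bigcap_{\Sigma}\mathcal{G}^{*}_{\Sigma}$, still residual by Baire. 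No area stratification is needed (and none would help). Apart from this, your surjectivity step also hides real work---White's paper needs a careful construction of the metric perturbation $h$ and, in the immersed case, an argument that one can avoid self-intersection points while staying off the zero set of $\psi$---but you have flagged this as the main obstacle, which is fair for a sketch.
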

\subsection{Bumpy Metric Theorem for Harmonic Maps}
To state the bumpy metric theorem for harmonic maps, first we need to define prime harmonic map. 
\begin{definition}[Chapter 5 \cite{JD}]
			Suppose that $f,h:S^2\to M$ are harmonic spheres. $f$ is called a \textit{branched cover} of $h$ if there exists a holomorphic map $g:S^2\to S^2$ of degree $d\geq 2$ such that $f=h\circ g$. We call $f$ a \textit{prime} harmonic sphere if it's not a branched cover of another harmonic sphere.
		\end{definition}
		\begin{definition}[branch point] A point $p\in\Sigma$ is a \textit{branch point} for the harmonic map $f:\Sigma\to M$ if $(\partial f/\partial z)(p)=0$, where $z$ is any complex coordinate near $p$.
		\end{definition}
		
		\begin{definition}
			If $p$ is a branch point of $f$ but there exists some neighborhood $V$ containing $p$ such that $f(V)$ is an immersed surface, then we say that $p$ is a \textit{false branch point}.
		\end{definition}
		\begin{definition}[Chapter 5 \cite{JD}]
			If $f:\Sigma\to M$ is a parametrized minimal surface we say that $p\in\Sigma$ is an \textit{injective point} for $f$ if 
			\[df(p)\neq 0,\text{ and}\quad f^{-1}(f(p))=0.\]
			If $f:\Sigma\to M$ is connected and has injective points, we say it is \textit{somewhere injective}.
		\end{definition}
		\begin{lemma}[Chapter 5 \cite{JD}]\label{2}
			If a harmonic map $f:\Sigma\to M$ is somewhere injective, then it is prime.
		\end{lemma}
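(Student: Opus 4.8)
The plan is to argue by contradiction. Suppose $f:\Sigma\to M$ is somewhere injective, so it has an injective point $p$ (in particular $df(p)\neq 0$ and $f^{-1}(f(p))=\{p\}$), but suppose that $f$ is \emph{not} prime. By the definition of prime harmonic sphere, there is then a harmonic sphere $h:\Sigma\to M$ and a holomorphic map $g:\Sigma\to\Sigma$ of degree $d\geq 2$ with $f=h\circ g$. The goal is to show that this forces $f$ to have no injective point at all, which is the desired contradiction.

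First I would record two elementary facts about the nonconstant holomorphic self-map $g$ of the Riemann sphere. Its ramification set $B:=\{q\in\Sigma:dg(q)=0\}$ is finite, since $g$ is a nonconstant holomorphic map between compact Riemann surfaces. And for every $q\notin B$ the local degree of $g$ at $q$ equals $1$, while the local degrees of $g$ over all points of the fiber $g^{-1}(g(q))$ sum to $\deg g=d\geq 2$; hence $g^{-1}(g(q))$ contains some point $q'\neq q$. Now I split into two cases. If $q\in B$, then $dg(q)=0$, so $df(q)=dh_{g(q)}\circ dg(q)=0$, and $q$ violates the requirement $df(q)\neq 0$, hence is not an injective point of $f$. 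If $q\notin B$, take $q'\in g^{-1}(g(q))$ with $q'\neq q$; then $f(q')=h(g(q'))=h(g(q))=f(q)$, so $f^{-1}(f(q))$ strictly contains $\{q\}$, and again $q$ is not an injective point of $f$. Since every $q\in\Sigma$ lies in one of these two cases, $f$ has no injective point, contradicting that it is somewhere injective.

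I do not expect a real obstacle in carrying this out; the only point needing a word of care is the fiber count when $q\notin B$, where one must observe that the local degree of $g$ at $q$ is exactly $1$ even though $g(q)$ may be a critical value of $g$, so a second preimage $q'$ always exists. I would also note that harmonicity of $h$ plays no role in the argument --- only the holomorphicity of $g$ and the degree bound $d\geq 2$ are used --- and that the connectedness built into the definition of \emph{somewhere injective} is automatic here since the domain is $S^2$.
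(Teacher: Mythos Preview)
Your argument is correct. The paper itself does not supply a proof of this lemma; it merely quotes the statement from \cite{JD}, so there is no in-paper proof to compare against. That said, your contradiction argument is exactly the standard one: if $f=h\circ g$ with $\deg g\geq 2$, then at a ramification point of $g$ the differential $df$ vanishes, while at an unramified point the fiber $g^{-1}(g(q))$ has at least two elements since local degrees sum to $d\geq2$ and the local degree at $q$ is $1$; either way $q$ fails to be an injective point. Your remark that harmonicity of $h$ is irrelevant, and your care about the case where $g(q)$ is a critical value but $q$ itself is unramified, are both apt.
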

		\begin{lemma}[Chapter 5 \cite{JD}]\label{bp}
			If a conformal harmonic map $f:\Sigma\to M$ is prime, its injective points form an open dense subset of $\Sigma$.
		\end{lemma}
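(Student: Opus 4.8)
The plan is to treat openness and density of the set $\mathcal I$ of injective points separately: openness is soft, while density I would prove by contradiction, producing a factorization that would violate primeness.

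For openness, fix $p\in\mathcal I$. Since $df(p)\neq0$ is an open condition and $f$ is an immersion near $p$, shrinking gives a neighbourhood $V\ni p$ on which $f$ is an embedding. If $p_n\to p$ with $p_n\notin\mathcal I$, then $df(p_n)\neq0$ for large $n$, so there are $q_n\neq p_n$ with $f(q_n)=f(p_n)$; compactness of $\Sigma$ yields a subsequence $q_n\to q$ with $f(q)=f(p)$, hence $q=p$ as $p$ is injective, so $p_n,q_n\in V$ for large $n$, contradicting injectivity of $f|_V$. Thus $\mathcal I$ is open.

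For density, suppose $\mathcal I$ is not dense, so some nonempty open $U$ misses $\mathcal I$. I would first record that the branch locus $B$ of $f$ is finite: in a suitable complex coordinate near a branch point one has $\partial_z f=z^{k}a(z)$ with $a(0)\neq0$ (local structure of conformal harmonic maps, Hartman--Wintner), so $df$ vanishes only at isolated points. Shrinking $U$, I may assume $U\cap B=\varnothing$ and that $f|_U$ is an embedding, so $\Gamma:=f(U)$ is an embedded minimal surface in $M$; and since no point of $U$ is injective, $f^{-1}(f(U))\supsetneq U$, so $f$ is at least two-to-one over $\Gamma$.

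The core step---and the main obstacle---is to promote this local two-sheetedness to a global branched covering. A conformal harmonic map between Riemann surfaces is $\pm$holomorphic, so on any open set that $f$ maps into $\Gamma$ one has $f=h\circ\psi$ for a conformal harmonic chart $h$ of $\Gamma$ and a holomorphic $\psi$. Analytically continuing this germ along paths in the connected surface $\Sigma\setminus B$ and invoking unique continuation for the harmonic map system (Aronszajn), one obtains a closed Riemann surface $\widetilde\Sigma$, a conformal harmonic parametrization $h\colon\widetilde\Sigma\to M$ of the analytic continuation of $\Gamma$, and a holomorphic $\phi\colon\Sigma\to\widetilde\Sigma$ of degree $\ge2$ with $f=h\circ\phi$; since $\Sigma=S^2$ branched-covers $\widetilde\Sigma$, Riemann--Hurwitz forces $\widetilde\Sigma=S^2$, so $h$ is a harmonic sphere and $f$ is a branched cover of it, contradicting primeness. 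The delicate points are controlling the monodromy of the continuation around the points of $B$ so that the global factorization is consistent, and verifying that the degree is genuinely $\ge2$; granting these, density---and hence the lemma---follows, and together with Lemma~\ref{2} it shows that prime and somewhere injective are equivalent for connected conformal harmonic maps.
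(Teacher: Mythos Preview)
The paper does not supply a proof of this lemma; it is quoted verbatim from \cite{JD}, so there is no in-paper argument to compare against. Your outline follows the standard route used in that reference (and, in parallel settings, in Gulliver--Osserman--Royden for minimal surfaces and McDuff--Salamon for $J$-holomorphic curves): openness by a local-embedding/compactness argument, density by manufacturing a degree $\ge 2$ holomorphic factorization from a persistent failure of injectivity.

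Your openness argument is clean and correct. For density, the strategy is right, but you have been honest that the heart of the matter---building the global factorization $f=h\circ\phi$ from the local two-sheetedness over $\Gamma$---is only sketched. Two points deserve more than ``granting these'': first, the construction of $\widetilde\Sigma$ and $h$ is not just analytic continuation of a chart of $\Gamma$; one must show that the image $f(\Sigma)$ carries a well-defined Riemann-surface structure through which $f$ factors holomorphically, and this requires the local normal form for conformal harmonic maps near branch points (Micallef--White/Gulliver--Osserman--Royden) together with unique continuation to handle how sheets meet. Second, your appeal to Riemann--Hurwitz to force $\widetilde\Sigma=S^2$ presumes $\Sigma=S^2$; the lemma is stated for general $\Sigma$, and in that generality the definition of \emph{prime} (as in \cite{JD}) already permits the base $\widetilde\Sigma$ to be any compact Riemann surface, so no genus restriction is needed---you should simply drop that step rather than specialize the hypothesis. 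With those two items filled in, your argument becomes the one in \cite{JD}.
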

		\begin{theorem}[Bumpy Metric Theorem for Harmonic Maps, theorem 5.1.1\cite{JD}]\label{jdbumpy}
			If $M$ is a compact smooth manifold of dimension at least three, then for a generic choice of Riemannina metric on $M$, all prime compact parametrized minimal surfaces $f:\Sigma\to M$ are free of branch points and lie on nondegenerate critical submanifolds, each such submanifold being an orbit of the group $G$ of conformal automorphisms of $\Sigma$ which are homotopic to the identity. 
		\end{theorem}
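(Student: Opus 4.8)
This is Theorem~5.1.1 of Moore~\cite{JD}; the proof is a Sard--Smale transversality argument, whose plan I outline here. Fix the conformal type of $\Sigma$ and a Sobolev exponent $k\geq 2$, let $\mathcal{M}$ be the Banach manifold of $L^2_k$ metrics on $M$, and let $\mathcal{F}$ be the Banach manifold of $L^2_{k}$ maps $f:\Sigma\to M$ in a fixed homotopy class with energy at most $E_0$. The tension field assembles into a smooth section $\mathcal{T}:\mathcal{F}\times\mathcal{M}\to\mathcal{E}$ of the Banach vector bundle whose fiber over $(f,g)$ is $L^2_{k-2}(f^{-1}TM)$, namely $\mathcal{T}(f,g)=\tau_g(f)$; its zero set $\mathcal{Z}$ is the universal moduli space of pairs $(f,g)$ with $f$ harmonic with respect to $g$. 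The plan is: (i) show that, restricted to the prime locus, $\mathcal{Z}$ is a Banach manifold; (ii) apply the Sard--Smale theorem to the projection $\pi:\mathcal{Z}\to\mathcal{M}$; (iii) read off the conclusion using elliptic regularity and the structure of the conformal group action.

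For step (i), the partial derivative $D_f\mathcal{T}(f,g)$ at a $g$-harmonic $f$ is, by Proposition~\ref{prop7}, the Jacobi operator $\mathcal{J}_f$, which is self-adjoint and elliptic, hence Fredholm of index $0$ with cokernel isomorphic to $\ker\mathcal{J}_f$. To show that the full linearization $D\mathcal{T}$ is onto I would differentiate in the metric direction: a variation $\dot g$ changes $\tau_g(f)$ by a first-order differential expression involving only the restriction of $\dot g$ and its derivatives to $f(\Sigma)$. If a Jacobi field $V\in\ker\mathcal{J}_f$ were $L^2$-orthogonal to all such metric variations, then, using that a prime harmonic map has an open dense set of injective points (Lemma~\ref{bp}, together with Lemma~\ref{2}), one shows that $V$ must vanish on a nonempty open set; unique continuation for the elliptic operator $\mathcal{J}_f$ then forces $V\equiv0$. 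Hence $D\mathcal{T}$ is surjective along the prime locus of $\mathcal{Z}$, which is therefore a Banach manifold.

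For steps (ii)--(iii), the projection $\pi:\mathcal{Z}\to\mathcal{M}$, $\pi(f,g)=g$, is Fredholm of the same index as $\mathcal{J}_f$, namely $0$, so by the Sard--Smale theorem its regular values form a residual subset $\mathcal{M}_{\mathrm{reg}}$ of $\mathcal{M}$ --- a countable intersection of open dense sets, i.e. a generic set in the sense of the definition above. For $g\in\mathcal{M}_{\mathrm{reg}}$ and a prime $g$-harmonic map $f$, working modulo the action of the identity component $G$ of $\mathrm{Aut}(\Sigma)$ --- for instance on a local slice transverse to the $G$-orbit --- the corresponding Jacobi operator is invertible; equivalently $\ker\mathcal{J}_f$ consists of exactly the Jacobi fields generated by reparametrization, which is precisely the statement that the prime harmonic maps lie on nondegenerate critical submanifolds, each one a $G$-orbit. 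Elliptic regularity (Theorem~\ref{regularity} and the remark following it) upgrades $L^2_k$ solutions to smooth maps, and the energy bound keeps the moduli space precompact, so the conclusion is consistent over all $k$. Finally, for $g\in\mathcal{M}_{\mathrm{reg}}$ a parallel transversality argument shows that no prime $g$-harmonic map can have a branch point: the branch conditions cut out a subset of infinite codimension and are never met transversally for generic $g$, and then by Lemmas~\ref{2} and~\ref{bp} the map is an immersion. Full details are in~\cite[Chapter~5]{JD}.

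The main obstacle is step (i): proving that metric variations, which are only felt along the image $f(\Sigma)$, suffice to span the cokernel of the Jacobi operator. This is exactly where primeness is indispensable --- a branched cover $f=h\circ\phi$ carries Jacobi fields pulled back from $h$ that no perturbation of the ambient metric can destroy --- so one genuinely needs the open dense injective locus of Lemma~\ref{bp} together with a unique continuation theorem for $\mathcal{J}_f$. The branch-point statement is the second delicate point: near a branch point $f$ fails to be an immersion and the naive transversality degenerates, which is what forces the separate stratified argument indicated above.
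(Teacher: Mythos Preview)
The paper does not give its own proof of this theorem: it is quoted verbatim as Theorem~5.1.1 of Moore~\cite{JD} and used as a black box in the subsequent arguments (Lemma~\ref{bumpyy} and Proposition~\ref{countable}). So there is no proof in the paper to compare your proposal against.

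That said, your outline is the standard Sard--Smale transversality scheme and is consistent with how Moore proves the result in~\cite[Chapter~5]{JD}. The three ingredients you single out --- Fredholmness of the Jacobi operator, surjectivity of the full linearization via metric variations supported near injective points (using Lemmas~\ref{2} and~\ref{bp} plus unique continuation), and then Sard--Smale for the projection to metrics --- are exactly the skeleton of Moore's argument. Your identification of the prime hypothesis as the crucial input for step~(i) is correct: without an open dense set of injective points one cannot localize the metric perturbations, and branched covers genuinely carry extra Jacobi fields that cannot be killed by ambient perturbations. One small caution: the branch-point freeness is not quite a separate ``infinite codimension'' argument in Moore's treatment but is obtained by a refinement of the same transversality setup, working with the jet conditions defining branch points; your phrasing is morally right but the details differ. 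Since the paper only cites the theorem, your sketch is more than what is needed here.
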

		\begin{remark}
			In Theorem \ref{jdbumpy}, nondegeneracy of a prime harmonic map means that the Jacobi field of it are those generated by the automorphisms of $\Sigma.$
		\end{remark}
		In our case $\Sigma=S^2$ and $G=PSL(2,\mathbb{C}).$ Given a harmonic sphere $u:S^2\to M$, if $u$ is prime then it's free of branch points by Theorem \ref{jdbumpy}. If $u$ is not prime it can be written as a branched cover of a prime harmonic sphere, and all of its branched points are false. Theorem \ref{jdbumpy} implies that for all harmonic spheres $u:S^2\to M$, $u(S^2)$ is an smooth immersed minimal submanifold. 
		
		We define the nullity
		$\mathcal{N}$ of a functional at a critical point $u$ is the dimension of the space of Jacobi fields of the functional at $u$. 
		\begin{theorem}[theorem 3.1\cite{Mario}]\label{EEquality}
			Given a harmonic sphere $u:S^2\to M$, let
			\begin{align*}
			\mathcal{N}_A=&\text{nullity of }u\text{ as a critical point of the area functional }A,\\
			\mathcal{N}_E=&\text{nullity of }u\text{ as a critical point of the energy functional }E,\\
			\mathcal{N}_E^T=&\text{dimension of the space of purely tangential
				Jacobi fields of }u,\\
			&\textit{
				as a critical point of }E.
			\end{align*}
			Then
			\[\mathcal{N}_A=\mathcal{N}_E-\mathcal{N}_E^T.\]
		\end{theorem}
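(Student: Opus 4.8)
The plan is to establish the equivalent identity $\mathcal{N}_E=\mathcal{N}_A+\mathcal{N}_E^{T}$ by exhibiting a splitting of the space $\Gamma(u^{-1}TM)$ of variation fields that is orthogonal for the energy index form $I$ (the Hessian of $E$ at $u$) and on which $I$ restricts, on one summand, to the area index form $I_A$ and, on the other, to a positive semidefinite form whose radical is exactly the space of purely tangential Jacobi fields. Two preliminary facts set the stage. First, the Hopf differential of the harmonic sphere $u$ is a holomorphic quadratic differential on $S^2$, hence identically zero, so $u$ is weakly conformal and therefore a branched minimal immersion; together with Theorem \ref{jdbumpy} (the branch points of a harmonic sphere are false) the image $\Sigma=u(S^2)$ is a smoothly immersed minimal surface, and $u^{-1}TM=u^{*}T\Sigma\oplus u^{*}\nu_\Sigma$ splits into a tangential part $\mathcal{R}:=\{du(W):W\in\Gamma(TS^2)\}$ and a normal part $N:=\Gamma(u^{*}\nu_\Sigma)$. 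Second, $E(v)\geq A(v)$ for every map $v$, with equality exactly when $v$ is weakly conformal; since $u$ is conformal and harmonic, $E(u)=A(u)$ and $\delta E(u)=0=\delta A(u)$ (the latter because $u$ is minimal).

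Step 1, the tangential Jacobi fields: for the flow $\psi_s$ of any $W\in\Gamma(TS^2)$ one has $E(u\circ\psi_s)\geq A(u\circ\psi_s)=A(u)=E(u)$, so $s\mapsto E(u\circ\psi_s)$ is minimized at $s=0$ and $I$ is positive semidefinite on $\mathcal{R}$. Writing $u^{*}g=\lambda g_0$ ($g_0$ round) and expanding $E(u\circ\psi_s)-E(u)=\frac12\int(\sqrt{\lambda_1(s)}-\sqrt{\lambda_2(s)})^{2}$ in the eigenvalues of $(u\circ\psi_s)^{*}g$, the second derivative at $s=0$ vanishes exactly when the traceless part of $\mathcal{L}_W g_0$ vanishes, i.e.\ when $W$ is a conformal vector field of $S^2$; conversely, for such $W$ the maps $u\circ\psi_s$ are all harmonic, so $du(W)\in\ker I$. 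Hence the radical of $I|_{\mathcal{R}}$ is $\mathcal{R}_0:=\{du(W):W\in\mathfrak{conf}(S^2)\}$, which is precisely the space of purely tangential Jacobi fields of $E$; in particular $\mathcal{N}_E^{T}=\dim\mathcal{R}_0=\dim PSL(2,\mathbb{C})=6$.

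Step 2, the conformal correction operator: given $V\in N$, set $v_s=\exp_u(sV)$ and let $\psi_s$ be the diffeomorphism of $S^2$, unique modulo $PSL(2,\mathbb{C})$ and with $\psi_0=\mathrm{id}$, for which $\psi_s^{*}(v_s^{*}g)$ is pointwise conformal to $g_0$ (uniformization in genus zero); put $\tilde v_s=v_s\circ\psi_s$, a family of conformal maps with $\tilde v_0=u$. The linearization of $s\mapsto\psi_s$ solves the conformal Killing equation on $S^2$, whose cokernel (the holomorphic quadratic differentials) is trivial and whose kernel is $\mathfrak{conf}(S^2)$; thus $\psi_s$ can be taken smooth in $s$, and $\Phi(V):=\frac{d}{ds}\big|_{s=0}\tilde v_s$ depends linearly on $V$, with normal component equal to $V$ (the reparametrization affects only tangential components). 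Since $\tilde v_s$ is conformal, $E(\tilde v_s)=A(\tilde v_s)=A(v_s)$, so, using $\delta E(u)=0$, $I(\Phi(V),\Phi(V))=\frac{d^{2}}{ds^{2}}\big|_{s=0}E(\tilde v_s)=\frac{d^{2}}{ds^{2}}\big|_{s=0}A(\exp_u(sV))=I_A(V,V)$, the second variation of the area functional in the normal direction $V$, whose kernel has dimension $\mathcal{N}_A$ by definition. Moreover, for every $W$ the function $t\mapsto E(\tilde v_s\circ\psi^{W}_t)$ is minimized at $t=0$ (again by $E\geq A$ and the reparametrization invariance of $A$), so the mixed second variation $I(\Phi(V),du(W))=\frac{\partial^{2}}{\partial s\,\partial t}\big|_{s=t=0}E(\tilde v_s\circ\psi^{W}_t)$ vanishes; hence $\mathrm{Im}\,\Phi$ and $\mathcal{R}$ are $I$-orthogonal, and as the normal part of $\Phi(V)$ is $V$ they are complementary, $\Gamma(u^{-1}TM)=\mathrm{Im}\,\Phi\oplus\mathcal{R}$. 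Decomposing $v=\Phi(V)+du(W)$ and using orthogonality, $v\in\ker I$ if and only if $I_A(V,\cdot)=0$ and $du(W)$ lies in the radical of $I|_{\mathcal{R}}$; therefore $\ker I=\Phi(\ker I_A)\oplus\mathcal{R}_0$, and taking dimensions gives $\mathcal{N}_E=\mathcal{N}_A+\mathcal{N}_E^{T}$.

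The crux is Step 2: producing the uniformizing diffeomorphisms $\psi_s$ of the perturbed metrics with smooth (and, to first order, linear) dependence on the parameter, and carrying out the bookkeeping at false branch points, where the splitting $u^{*}T\Sigma\oplus u^{*}\nu_\Sigma$ must replace a naive pointwise orthogonal splitting of $TM$ along $u$. A more algebraic alternative, probably closer to the cited proof, is to complexify: with the holomorphic structure that $u$ induces on $u^{-1}TM\otimes\mathbb{C}$, the tangential part of the complexified Jacobi kernel is $H^{0}(S^2,\mathcal{O}(2))$ of complex dimension $3$, while the vanishing of the Hopf differential decouples the normal part and identifies it with the complexified area Jacobi fields, so Riemann--Roch supplies the dimension count; this replaces the uniformization step by a holomorphic-sections argument but requires the same care at branch points.
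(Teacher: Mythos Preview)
The paper does not prove Theorem \ref{EEquality}; it is quoted verbatim from \cite{Mario} (theorem 3.1 there) and invoked as a black box inside the proof of Lemma \ref{bumpyy}. There is therefore no proof in this paper against which to compare your proposal.

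As for the proposal itself, your outline is the standard strategy: exploit conformality of harmonic spheres and the inequality $E\geq A$ (with equality on conformal maps) to split the energy Hessian into a tangential block, positive semidefinite with radical the image under $du$ of the conformal Killing fields, and a normal block that agrees with the area Hessian after a conformal correction of the variation. Your orthogonality argument in Step~2 is correct: from $E(\tilde v_s\circ\psi^W_t)\geq A(\tilde v_s)=E(\tilde v_s)$ for all $t$ one gets $\partial_t|_{t=0}=0$ for every $s$, hence the mixed second derivative vanishes. Two points deserve care. First, the theorem as stated carries no genericity hypothesis on the metric, so you cannot appeal to Theorem \ref{jdbumpy} to rule out branch points; the splitting into tangential and normal parts and the smooth family $\psi_s$ must be set up across branch points (this is where the holomorphic viewpoint you mention at the end, extending the tangent plane field holomorphically over the zeros of $\partial u$, is the clean way through). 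Second, the assertion $\mathcal{N}_E^{T}=6$ presumes that $du$ is injective on $\mathfrak{conf}(S^2)$ and that every tangential Jacobi field is of the form $du(W)$; both hold for nonconstant $u$, but the latter uses that on $S^2$ the conformal Killing operator is surjective (no holomorphic quadratic differentials), which you should make explicit.
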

		\subsection{Countability of Harmonic Spheres with Bounded Energy}
		
		\begin{lemma}\label{bumpyy}
			Given $(M,g)$, $g$ bumpy, the dimension of $M$ is at least three, and a harmonic sphere $u:S^2\to M$. There exists $\epsilon=\epsilon(u)>0$, so that if $\|f-u\|_{W^{1,2}}<\epsilon$, $f:S^2\to M$ harmonic, then $[f]=[u].$ (See definition \ref{equivalentclass} for $[u]$.)
		\end{lemma}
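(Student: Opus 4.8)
The plan is to argue by contradiction. If no $\epsilon(u)>0$ works, there are harmonic spheres $f_j:S^2\to M$ with $\|f_j-u\|_{W^{1,2}}\to 0$ but $[f_j]\neq[u]$ for all $j$. The first step is to upgrade this to $C^\infty$ convergence. Since $u$ is smooth, its energy does not concentrate: there is $r_0>0$ with $\int_{B_{r_0}(p)}|\nabla u|^2<\tfrac12\epsilon_{su}$ for every $p\in S^2$, so for $j$ large $\int_{B_{r_0}(p)}|\nabla f_j|^2\le 2\int_{B_{r_0}(p)}|\nabla u|^2+2\|f_j-u\|_{W^{1,2}}^2<\epsilon_{su}$ uniformly in $p$. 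Then Theorem \ref{regularity} bounds $\|\nabla f_j\|_{L^\infty}$ uniformly, and the elliptic bootstrap described after Theorem \ref{regularity} (feeding this into (\ref{whel})) gives uniform $C^k$ bounds for every $k$. Hence a subsequence of $f_j$ converges in $C^\infty$, necessarily to $u$.

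Next I would extract a Jacobi field. Write $f_j=\exp_u(V_j)$ with $V_j\in\Gamma(u^{-1}TM)$, $V_j\to 0$ in $C^\infty$, $V_j\not\equiv 0$, and set $W_j:=V_j/\|V_j\|_{L^2}$. Since $u$ and $f_j$ are both harmonic and, by Proposition \ref{prop7}, the linearization of the tension field at $u$ is $-\mathcal{J}_u$, one has $0=\tau(f_j)=-\mathcal{J}_u(V_j)+Q_j$, where $Q_j$ is of second order in $V_j$, with $\|Q_j\|_{L^2}\le C\|V_j\|_{C^2}\|V_j\|_{W^{2,2}}$. Using elliptic estimates for the self-adjoint operator $\mathcal{J}_u$ to control $\|V_j\|_{W^{2,2}}$ by a fixed multiple of $\|V_j\|_{L^2}$ for $j$ large, we get $\mathcal{J}_u(W_j)\to 0$ in $L^2$ and $\{W_j\}$ bounded in every $C^k$; a further subsequence converges to some $W$ with $\|W\|_{L^2}=1$ and $\mathcal{J}_u(W)=0$, a nontrivial Jacobi field of $u$ for the energy.

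Now I would invoke bumpiness. Assume first that $u$ is prime. By the bumpy metric theorem for harmonic maps (Theorem \ref{jdbumpy} and the remark after it), for our metric every Jacobi field of $u$ is generated by an infinitesimal conformal automorphism, so $W$ is tangent at $u$ to the orbit $[u]=\{u\circ\phi:\phi\in PSL(2,\mathbb{C})\}$, which is a smooth finite-dimensional submanifold near $u$ because the non-constant map $u$ has discrete stabilizer. I then normalize by a slice: for $j$ large $f_j$ lies in a tubular neighborhood of the orbit, so after replacing $f_j$ by its composition with a M\"obius transformation $\phi_j\to\mathrm{id}$ — which changes neither $[f_j]\neq[u]$ nor the previous steps — I may assume $V_j\perp T_u[u]$ in $L^2$. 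Then $W=\lim W_j\perp T_u[u]$, contradicting both $W\in T_u[u]$ and $\|W\|_{L^2}=1$. Equivalently, one can finish by Lyapunov--Schmidt: nondegeneracy forces the bifurcation function of $V\mapsto\tau(\exp_uV)$ to vanish near $0$, so the harmonic maps near $u$ are exactly the orbit $[u]$.

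The main obstacle is the remaining case, in which $u=h\circ\rho$ is a nontrivial branched cover of a prime harmonic sphere $h$ with $\rho:S^2\to S^2$ holomorphic of degree $\ge 2$: here Theorem \ref{jdbumpy} does not apply directly, since $u$ itself carries Jacobi fields that are not tangent to its $PSL(2,\mathbb{C})$-orbit. The plan here is to bring in that the bumpy metric has no minimal submanifolds with nonzero normal Jacobi fields (Theorem \ref{bmt}); combined with Theorem \ref{EEquality}, this should force every harmonic map $C^\infty$-close to $u$ to factor through the minimal surface $h(S^2)$, reducing the problem to harmonic maps of $S^2$ into $h(S^2)$ near $\rho$ and then, via Lemma \ref{bp} and the prime case applied to $h$, to a statement about reparametrizations. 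Making this factorization rigorous — and identifying exactly which reparametrizations are redundant while controlling the branch locus — is where I expect the real work to lie.
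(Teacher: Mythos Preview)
Your contradiction setup, the upgrade to $C^\infty$ convergence via Theorem~\ref{regularity}, and the extraction of a nontrivial Jacobi field all match the paper's proof; the paper normalizes $(f_j-u)$ by its sup norm rather than the $L^2$ norm, but this is immaterial. Your treatment of the prime case via Theorem~\ref{jdbumpy} together with a slice/Lyapunov--Schmidt argument is in fact cleaner than the paper's, which does not make the slice explicit.

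The organization differs. Rather than splitting first on whether $u$ is prime, the paper splits first on whether $f_j(S^2)=u(S^2)$: if the images disagree along a subsequence, the limiting Jacobi field is not purely tangential, so by Theorem~\ref{EEquality} the immersed minimal sphere $u(S^2)$ carries a nonzero normal Jacobi field for area, contradicting Theorem~\ref{bmt}. This step works uniformly in the prime/non-prime dichotomy and is exactly the ``force $f_j$ to factor through $h(S^2)$'' step you anticipate for the non-prime case.

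Where your outline stops, the paper's additional ingredient is a degree argument. Once $f_j(S^2)=u(S^2)$, one shows (via a homotopy and a preimage count at a regular value supplied by Lemma~\ref{bp}) that $\deg f_j=\deg u$ as maps onto $u(S^2)$. Writing $u=\tilde u\circ\phi$ and $f_j=h_j\circ g_j$ with $\tilde u,h_j$ prime and $\phi,g_j$ holomorphic self-maps of $S^2$, the equality of degrees forces $\deg g_j=\deg\phi$, and the paper then argues that the prime factors $h_j$ converge strongly to $\tilde u$, feeding back into the already-handled prime case. Your proposed reduction to ``harmonic maps of $S^2$ into $h(S^2)$ near $\rho$'' is the right picture, but without pinning down the degree you cannot rule out $f_j$ being a cover of $h$ of different degree; this degree step is the concrete piece your outline is missing.
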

		\begin{proof}
			Given a harmonic sphere $u:S^2\to M$, Theorem \ref{jdbumpy} implies that $u(S^2)$ is a smooth immersed minimal sphere.
			We argue by contradiction. If not, then there exists a sequence of harmonic spheres $\{f_i\}_{i\in\mathbb{N}}$, $f_i:S^2\to M$, such that $\|f_i-u\|_{W^{1,2}}<1/i$ and $[f_i]\neq[u]$ for all $i\in\mathbb{N}.$ By strong convergence in $W^{1,2}$, Theorem \ref{regularity}, and Arzel\`a-Ascoli theorem we know that the convergence $f_i\to u$ is smooth and uniform. Thus we can choose subsequence $i(j)\to\infty$ as $j\to\infty$ such that 
			\begin{equation}\label{souri}
			|f_{i(j)}(x)-u(x)|<1/j,\:\forall x\in S^2,    
			\end{equation}
			and
			\begin{equation}
			\Big|\frac{\partial f_{i(j)}(x)}{\partial x^{\alpha}}-\frac{\partial u(x)}{\partial x^{\alpha}}\Big|<1/j,\:\forall x\in S^2,
			\end{equation}
			where $(x^\alpha)$ denotes the local coordinate system of $S^2$. Since $f_{i(j)}$ is harmonic we have that 
			\begin{align*}
			0-0&=\tau(f_{i(j)})-\tau(u)\\
			&=\int_{0}^1\frac{\partial}{\partial s}\at[\Big]{s=t}\tau(u+s(f_{i(j)}-u))dt.
			\end{align*}
			Thus there exists $t_j\in(0,1)$ such that 
			\[0=\frac{\partial}{\partial t}\at[\Big]{t=t_j}\tau(u+t(f_{i(j)}-u)).\]
Let 
			\[w:=\lim_{j\to\infty}\frac{f_{i(j)}-u}{\max_{x\in S^2}|f_{i(j)}(x)-u(x)|}.\]
By proposition \ref{prop7}, we know that $w$ is an nontrivial Jacobi field. i.e., $\mathcal{J}_u(w)=0.$ Assume that $f_{i(j)}(S^2)\neq u(S^2)$ for all $j\in\mathbb{N}$. Since $f_i(S^2)\neq u(S^2)$ implies that $w$ is not purely tangential, Theorem \ref{EEquality} implies that $u$ has a nontrivial Jacobi field for area functional as a smooth immersed minimal surface. This contradicts the bumpy metric assumption.
			
			Now we consider the case that $f_{i(j)}(S^2)=u(S^2)$ for all $j\in\mathbb{N}$. If $u$ is a prime harmonic sphere, then Theorem \ref{jdbumpy} implies that $w$ is generated by $PSL(2,\mathbb{C})$, contradicting the assumption $[f_{i(j)}]\neq[u]$. Now we consider the case that $u$ is not prime. \begin{claim}
				For $j$ sufficiently large, we have
				\[deg(f_{i(j)})=deg(u).\]
			\end{claim}
			\begin{proof}[Proof of the claim]
				It's known that if two maps are homotopic then they have the same degree. The stradegy of the proof is similar and can be found in \cite[Chapter 5]{dt}.
				
				For $j$ sufficiently large, by equation \ref{souri} we know that $f_{i(j)}$ is homotopic to $u$. That is, there exists a continuous map $H_j:S^2\times[0,1]\to u(S^2)$:
				\[H_j(x,t):=\gamma_x^j(t),\:x\in S^2,\]
				such that $H_j(x,0)=u(x)$ and $H_j(x,1)=f_{i(j)}(x)$ for all $x\in S^2$, where $\gamma_x^j(t)$ denotes the unique geodesic with respect to the intrinsic metric starting from $u(x)$ with end point at $f_{i(j)}(x)$. Since $f_{i(j)}(S^2)=u(S^2)$, and by Lemma \ref{bp} we can choose $y\in u(S^2)$ so it satisfies the following conditions:
				\begin{enumerate}
					\item $\forall p\in u^{-1}(y)$, $du(p)\neq 0$,
					\item $\forall p\in f_{i(j)}^{-1}(y)$, $df_i(p)\neq 0$,
					\item $y$ is a regular value for $H_j$ and $H_j|_{\partial(S^2\times[0,1])}$.
				\end{enumerate}
				$H_j^{-1}(y)$ is a compact 1-dimensional submanifold with boundary $(H_j|_{\partial(S^2\times[0,1])})^{-1}(y)$. In other words, it contains embedded arcs which are transverse to $\partial(S^2\times[0,1])$. By \cite[Chapter 5]{dt}, given $p_1\in H_j^{-1}(y)$, there is an unique $p_2\in H_j^{-1}(y)$, $p_1\neq p_2$, and a component arc $\Gamma\in H_j^{-1}(y)$ with $\partial\Gamma=\{p_1,p_2\}$. By \cite[Chapter 5, Lemma 1.2]{dt}, $p_1$ and $p_2$ are of opposite type for $H_j|_{\partial(S^2\times[0,1])}$. This implies that for any $p_1\in u^{-1}(y)$, there is an unique corresponding $p_2\in f_{i(j)}^{-1}(y)$. Then it follows that degree of $u$ and degree of $f_{i(j)}$ are the same for $j$ sufficiently large.  \end{proof}
			By the smooth convergence of $f_{i(j)}\to u$, and $deg (f_{i(j)})=deg(u)$. Let $u=\phi\circ\tilde{u}$ for some prime harmonic map $\tilde{u}$. We can write $f_{i_{j}}$ as a branched cover of a prime harmonic map $h_{i_{j}}$,i.e., $f_{i_{j}}=g_{i_j}\circ h_{i_{j}}$, with $deg (f_{i(j)})=deg(g_{i_j})=deg(u)$. By lemma \ref{2} and lemma \ref{bp} we know that $h_{i_{j}}$ is somewhere injective and the injective points of $h_{i_{j}}$ form an open dense subset of $S^2$. So we obtain a sequence of prime harmonic spheres $h_{i_{j}}$ that converges strongly to a prime harmonic sphere $\tilde{u}$ in $W^{1,2}$. It is the desired contradiction.
		\end{proof}	
		
		\begin{corollary}\label{same}
The $\epsilon(u)>0$ given as lemma \ref{bumpyy} is invariant under the equivalence relation. i.e., given harmonic spheres $f,g:S^2\to M$, if $[f]=[g]$, then $\epsilon(f)=\epsilon(g).$ 
		\end{corollary}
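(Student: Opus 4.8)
The plan is to reduce the corollary to a uniformity statement over the conformal orbit of a fixed representative, and then to re-run the compactness argument behind Lemma~\ref{bumpyy} after a re-centering step. First I would recall from Definition~\ref{equivalentclass} that $[f]=[g]$ means $g=f\circ\phi$ for some $\phi\in PSL(2,\mathbb{C})$, and that precomposition by a M\"obius transformation preserves both harmonicity (conformal invariance of the harmonic map equation out of a Riemann surface) and energy. Since the conclusion ``$[h]=[u]$'' of Lemma~\ref{bumpyy} depends on $u$ only through its equivalence class, it is enough to produce, for each class $c$, one number $\epsilon(c)>0$ such that every harmonic sphere $h$ with $\|h-g\|_{W^{1,2}}<\epsilon(c)$ for \emph{some} $g\in c$ has $[h]=c$; then $\epsilon(f):=\epsilon([f])$ is a legitimate choice of the constant in Lemma~\ref{bumpyy} which is by construction invariant along equivalence classes, and in particular $\epsilon(f)=\epsilon(g)$ whenever $[f]=[g]$. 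Fixing a representative $u$ of $c$, so that the elements of $c$ are exactly the $u\circ\phi$, this amounts to showing that $\mathrm{dist}_{W^{1,2}}\big(u\circ\phi,\ \{\text{harmonic spheres}\}\setminus c\big)$ is bounded below by a positive constant independent of $\phi\in PSL(2,\mathbb{C})$.

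I would argue by contradiction: if not, there are $\phi_i\in PSL(2,\mathbb{C})$ and harmonic spheres $h_i$ with $\|h_i-u\circ\phi_i\|_{W^{1,2}}\to0$ and $[h_i]\neq c$. Set $\tilde h_i:=h_i\circ\phi_i^{-1}$; these are again harmonic with $[\tilde h_i]=[h_i]\neq c$. The key point is that $\tilde h_i-u=(h_i-u\circ\phi_i)\circ\phi_i^{-1}$, and the Dirichlet integral of a difference is conformally invariant in dimension two (using that $\phi_i^{-1}$ is an orientation-preserving conformal automorphism of $S^2$), so $\|\nabla\tilde h_i-\nabla u\|_{L^2}=\|\nabla h_i-\nabla(u\circ\phi_i)\|_{L^2}\to0$. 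Thus the $\tilde h_i$ are harmonic spheres with $E(\tilde h_i)\to E(u)$ and with gradients converging \emph{strongly} in $L^2$ to $\nabla u$; strong $L^2$-convergence of the gradients rules out concentration of energy, so by Theorem~\ref{bubble conv} no bubbles form and, after passing to a subsequence and adjusting by a constant vector (Poincar\'e inequality, using that $M$ is compact), $\tilde h_i\to v_0$ strongly in $W^{1,2}$ for a harmonic sphere $v_0$ with $\nabla v_0=\nabla u$. Hence $v_0=u+c_0$ for some constant $c_0\in\mathbb{R}^N$, with $v_0(S^2)\subseteq M$.

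It then remains to show $c_0=0$, and here I would pass to the varifold picture. Since $\|h_i-u\circ\phi_i\|_{W^{1,2}}\to0$ and $W^{1,2}$-convergence implies convergence in $d_{V_R}$ on sets of bounded energy (cf. the discussion preceding Proposition~\ref{bv}), we get $d_{V_R}(h_i,u\circ\phi_i)\to0$; and $u\circ\phi_i$ has the same image varifold as $u$, since a varifold only records the image with multiplicity and is therefore reparametrization-invariant, so $d_{V_R}(u\circ\phi_i,u)=0$. Consequently $d_{V_R}(\tilde h_i,u)=d_{V_R}(h_i,u)\to0$. On the other hand $\tilde h_i\to v_0$ strongly in $W^{1,2}$ gives $d_{V_R}(\tilde h_i,v_0)\to0$, so the varifolds of $u$ and $v_0=u+c_0$ coincide. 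But the varifold of $u$ is a nonzero Radon measure with compact support in the $\mathbb{R}^N$-factor (as $u$ is non-constant; the class of a constant map is a single point and the statement is then trivial), and such a measure cannot equal its push-forward under translation by a nonzero $c_0$ (its support would have to be invariant under all integer multiples of $c_0$, hence unbounded); therefore $c_0=0$. Then $\tilde h_i\to u$ strongly in $W^{1,2}$, so Lemma~\ref{bumpyy} applied to $u$ gives $[\tilde h_i]=[u]=c$ for all large $i$, contradicting $[h_i]\neq c$. This produces $\epsilon(c)>0$ and finishes the proof.

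The main obstacle is the non-compactness of $PSL(2,\mathbb{C})$: the reparametrizations $\phi_i$ may degenerate, so one cannot directly extract a $W^{1,2}$-limit inside the orbit $c$, which is exactly why Lemma~\ref{bumpyy} alone does not immediately yield a constant depending only on the class. The device that defuses this is the conformal invariance of the Dirichlet integral, which transports everything back to the fixed representative $u$ with the gradient estimate intact; the one remaining delicate point is then the non-gradient part of the $W^{1,2}$-norm after re-centering, i.e. excluding a spurious ambient translation $c_0\neq0$ in the limit, and this is what the varifold argument of the previous paragraph takes care of.
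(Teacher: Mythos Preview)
Your argument is correct, but it is considerably more elaborate than the paper's. The paper interprets $\epsilon(u)$ as the supremal radius with the property in Lemma~\ref{bumpyy} and gives a direct three-line proof: if $\epsilon(f)>\epsilon(g)$ with $g=f\circ\phi$, there is a harmonic $u$ with $\int_{S^2}|\nabla u-\nabla g|^2<\epsilon(f)$ and $[u]\neq[g]$; by conformal invariance of the Dirichlet integral $\int_{S^2}|\nabla(u\circ\phi^{-1})-\nabla f|^2$ equals the same number, hence is $<\epsilon(f)$, and Lemma~\ref{bumpyy} applied at $f$ yields $[u]=[u\circ\phi^{-1}]=[f]=[g]$, a contradiction. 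No sequences, no bubble compactness, no varifolds.

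You identify the same key mechanism---conformal invariance transports Dirichlet-closeness along the $PSL(2,\mathbb{C})$-orbit---but you apply it to a \emph{sequence} and then pass to a limit, which forces you to worry about the non-invariant $L^2$ part of the $W^{1,2}$-norm and hence to introduce the bubble-compactness step and the varifold-translation argument to rule out the spurious constant $c_0$. The paper avoids all of this by applying the invariance to a single ``bad'' harmonic sphere and invoking Lemma~\ref{bumpyy} at the other representative directly, measuring distance throughout with the Dirichlet integral alone (which is how the other estimates in the paper, e.g.\ $d_B$ and Corollary~\ref{forrunstable}, are phrased; and indeed the proof of Lemma~\ref{bumpyy} itself only needs gradient convergence to bootstrap to smooth convergence via Theorem~\ref{regularity}). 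Your route does buy genuine control of the full $W^{1,2}$-norm if one insists on that, but at substantial extra cost; once you have $\|\nabla\tilde h_i-\nabla u\|_{L^2}\to 0$ with $[\tilde h_i]\neq[u]$, you could already stop and invoke Lemma~\ref{bumpyy} (in its Dirichlet-seminorm reading) for $u$ directly.
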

		\begin{proof}
We argue by contradiction. Given harmonic spheres $f,g:S^2\to M$ with $[f]=[g]$, and assume $\epsilon(f)>\epsilon(g)$. Then there exists a harmonic map $u:S^2\to M$ with
\[\epsilon(g)<\int_{S^2}|\nabla u-\nabla g|^2<\epsilon(f),\]
such that $[u]\neq [g].$ Since $[g]=[f]$, there exists $\phi\in PSL(2,\mathbb{C})$ such that $f\circ\phi=g$. This implies 
\begin{align*}
\epsilon(g)<\int_{S^2}|\nabla u-\nabla g|^2&=\int_{S^2}|\nabla u-\nabla(f\circ\phi)|^2\\
&=\int_{S^2}|\nabla(u\circ\phi^{-1})-\nabla f|^2<\epsilon(f),    
\end{align*}
thus we have $[u]=[u\circ\phi^{-1}]=[f]=[g]$, which is the desired contradiction.

		\end{proof}
		
		\begin{definition}
			We define $\mathcal{F}^W$ to be the equivalent classes of all harmonic sphere with energy bound $W$, that is:			$$\mathcal{F}^W:=\Big\{[f]\mid f:S^2\rightarrow M\text{ harmonic }, E(f)\leq W\Big\}.$$
		\end{definition}
		\begin{proposition}\label{countable}
			Given a closed manifold $(M,g)$, with generic $g$ and the dimension of $M$ is at least three. The set $\mathcal{F}^W$ is countable.
	\end{proposition}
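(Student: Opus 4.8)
The plan is to deduce the countability of $\mathcal F^W$ from the local rigidity of harmonic spheres under the bumpy metric hypothesis, namely Lemma \ref{bumpyy} and Corollary \ref{same}, combined with the separability of the space of $W^{1,2}$ maps. All of the delicate analysis (the interplay of primeness, branch points, degree, and the bumpy metric theorems) has already been absorbed into Lemma \ref{bumpyy}, so what remains is a soft point-set argument.

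First I would observe that a generic metric on $M$ is in particular bumpy in the sense of Definition \ref{bwbumpy} (being in the relevant countable intersection of open dense sets furnished by Theorems \ref{bmt} and \ref{jdbumpy}), so Lemma \ref{bumpyy} applies: to each harmonic sphere $u\colon S^2\to M$ it associates a number $\epsilon(u)>0$ such that every harmonic sphere $f$ with $\|f-u\|_{W^{1,2}}<\epsilon(u)$ satisfies $[f]=[u]$. By Corollary \ref{same} the assignment $u\mapsto\epsilon(u)$ is invariant under the equivalence relation, so for an equivalence class $c$ of harmonic spheres the positive number $\epsilon(c)$ is well defined.

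Next I would build an explicit injection of $\mathcal F^W$ into $\mathbb N\times\mathbb N$. Since $W^{1,2}(S^2,M)$ is a subset of the separable Hilbert space $W^{1,2}(S^2,\mathbb R^N)$, it is a separable metric space; fix a countable dense subset $\{q_m\}_{m\in\mathbb N}$. For $c\in\mathcal F^W$ let $n(c)\in\mathbb N$ be least with $\epsilon(c)>1/n(c)$, and let $m(c)\in\mathbb N$ be least such that $\|q_{m(c)}-f\|_{W^{1,2}}<1/(4n(c))$ for some representative $f$ of $c$; both exist, the latter by density of $\{q_m\}$. If $n(c)=n(c')=:n$ and $m(c)=m(c')=:m$, choose representatives $f\in c$ and $f'\in c'$ with $\|q_m-f\|_{W^{1,2}}<1/(4n)$ and $\|q_m-f'\|_{W^{1,2}}<1/(4n)$; then $\|f-f'\|_{W^{1,2}}<1/(2n)<1/n<\epsilon(c)=\epsilon(f)$, so Lemma \ref{bumpyy} applied with $u=f$ and the harmonic sphere $f'$ gives $[f']=[f]$, i.e.\ $c=c'$. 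Hence $c\mapsto(n(c),m(c))$ is injective and $\mathcal F^W$ is countable. The one point requiring care is that the rigidity radius $\epsilon$ is not uniform in $u$, which is precisely what the double indexing accounts for; beyond that I foresee no obstacle, since the entire analytic substance of the statement is already contained in Lemma \ref{bumpyy} and Corollary \ref{same}. (The energy bound $W$ is not even used: the argument shows that on a bumpy manifold there are only countably many equivalence classes of harmonic spheres in total.)
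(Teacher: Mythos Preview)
Your argument is correct and takes a genuinely different route from the paper's. The paper decomposes $\mathcal F^W$ as a countable union $\bigcup_n\mathcal F^W(n)$, where $\mathcal F^W(n)$ consists of those classes admitting a representative with energy below $\epsilon_{su}$ on every ball of a fixed finite $1/n$-cover of $S^2$; it then shows each $\mathcal F^W(n)$ is \emph{finite} by a compactness argument: an infinite sequence of distinct classes would, by Theorem \ref{bubble conv} together with the uniform small-energy bound ruling out bubbling, have representatives converging strongly in $W^{1,2}$ to a harmonic sphere, contradicting Lemma \ref{bumpyy}. Your approach replaces this compactness step by the separability of $W^{1,2}(S^2,M)$ and an explicit injection into $\mathbb N\times\mathbb N$. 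What the paper's route buys is a slightly sharper intermediate statement (each stratum is finite, not merely countable); what your route buys is that it is softer (no appeal to the bubble-convergence compactness Theorem \ref{bubble conv} or the a priori estimate Theorem \ref{regularity} beyond what is already inside Lemma \ref{bumpyy}) and, as you note, it does not use the energy bound $W$ at all, so it actually proves that under a bumpy metric the full set of equivalence classes of harmonic spheres is countable.
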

		\begin{proof}
			We pick a finite set $\{x^1_n,...,x^{p^n}_n\}$ of $S^2$ so that $S^2\subset\bigcup_{k=1}^{p^n} B_{1/n}(x^k_n).$ ($B_{1/n}(x^k_n)$ is the geodesic ball centered at the point $x^k_n$.) We define $\mathcal{F}^W(n)$ to be:
			$$\mathcal{F}^W(n)=\Big\{[f]\mid f\in\mathcal{F}^W,\int_{B_{1/n}(x^k_n)}|\nabla f|^2<\epsilon_{su},\:\text{for }k=1,...,p^n\Big\},$$
			where $\epsilon_{su}>0$ is the constant given in Theorem \ref{regularity}. We can see that:$$\mathcal{F}^W=\bigcup_{n\in\mathbb{N}}\mathcal{F}^W(n).$$ 
			Now we prove that $\mathcal{F}^W(n)$ is finite. If not, then there exists a sequence $\{[f_i]\}_{i\in\mathbb{N}}\subset \mathcal{F}^W(n)$, and $[f_i]\neq[f_j]$ if $i\neq j$. By Theorem \ref{bubble conv}, $f_i$ bubble converges to a harmonic map $f\in\mathcal{F}^W$ up to subsequence. Because of the assumption:
			$$\int_{B_{1/n}(x^k_n)}|\nabla f_i|^2<\epsilon_{su}\quad\forall i\in\mathbb{N},k=1,...,p^n.$$ 
			By Theorem \ref{regularity}, this implies that the convergence is strong in $W^{1,2}$ and $f\in\mathcal{F}^W(n)$. Then it contradicts Lemma \ref{bumpyy}.
		\end{proof}	

		\bibliographystyle{alpha}
		\bibliography{main.bib}	

\begin{thebibliography}{{Son}18}

\bibitem[CM08]{CD}
Tobias~H. Colding and William~P. Minicozzi, II.
\newblock Width and finite extinction time of {R}icci flow.
\newblock {\em Geom. Topol.}, 12(5):2537--2586, 2008.

\bibitem[CM11]{CDD}
Tobias~Holck Colding and William~P. Minicozzi, II.
\newblock {\em A course in minimal surfaces}, volume 121 of {\em Graduate
  Studies in Mathematics}.
\newblock American Mathematical Society, Providence, RI, 2011.

\bibitem[CS85]{CHS}
Hyeong~In Choi and Richard Schoen.
\newblock The space of minimal embeddings of a surface into a three-dimensional
  manifold of positive {R}icci curvature.
\newblock {\em Invent. Math.}, 81(3):387--394, 1985.

\bibitem[CT99]{CT}
Jingyi Chen and Gang Tian.
\newblock Compactification of moduli space of harmonic mappings.
\newblock {\em Comment. Math. Helv.}, 74(2):201--237, 1999.

\bibitem[EM08]{Mario}
Norio Ejiri and Mario Micallef.
\newblock Comparison between second variation of area and second variation of
  energy of a minimal surface.
\newblock {\em Adv. Calc. Var.}, 1(3):223--239, 2008.

\bibitem[H\'91]{Helenf}
F.~H\'{e}lein.
\newblock Sur la r\'{e}gularit\'{e} des applications faiblement harmoniques
  entre une surface et une vari\'{e}t\'{e} riemannienne.
\newblock In {\em S\'{e}minaire sur les \'{E}quations aux {D}\'{e}riv\'{e}es
  {P}artielles, 1990--1991}, pages Exp. No. X, 6. \'{E}cole Polytech.,
  Palaiseau, 1991.

\bibitem[Hir94]{dt}
Morris~W. Hirsch.
\newblock {\em Differential topology}, volume~33 of {\em Graduate Texts in
  Mathematics}.
\newblock Springer-Verlag, New York, 1994.
\newblock Corrected reprint of the 1976 original.

\bibitem[LLW17]{LLW}
Yuxiang Li, Lei Liu, and Youde Wang.
\newblock Blowup behavior of harmonic maps with finite index.
\newblock {\em Calc. Var. Partial Differential Equations}, 56(5):Art. 146, 16,
  2017.

\bibitem[LW15]{YY}
Yuxiang Li and Youde Wang.
\newblock A counterexample to the energy identity for sequences of
  {$\alpha$}-harmonic maps.
\newblock {\em Pacific J. Math.}, 274(1):107--123, 2015.

\bibitem[Mil63]{Morse}
J.~Milnor.
\newblock {\em Morse theory}.
\newblock Based on lecture notes by M. Spivak and R. Wells. Annals of
  Mathematics Studies, No. 51. Princeton University Press, Princeton, N.J.,
  1963.

\bibitem[MN16]{MN}
Fernando~C. Marques and Andr\'{e} Neves.
\newblock Morse index and multiplicity of min-max minimal hypersurfaces.
\newblock {\em Camb. J. Math.}, 4(4):463--511, 2016.

\bibitem[MN18]{MNeves}
Fernando~C. {Marques} and Andr{\'e} {Neves}.
\newblock {Morse index of multiplicity one min-max minimal hypersurfaces}.
\newblock {\em arXiv e-prints}, page arXiv:1803.04273, Mar 2018.

\bibitem[Moo17]{JD}
John~Douglas Moore.
\newblock {\em Introduction to global analysis}, volume 187 of {\em Graduate
  Studies in Mathematics}.
\newblock American Mathematical Society, Providence, RI, 2017.
\newblock Minimal surfaces in Riemannian manifolds.

\bibitem[Par96]{TP}
Thomas~H. Parker.
\newblock Bubble tree convergence for harmonic maps.
\newblock {\em J. Differential Geom.}, 44(3):595--633, 1996.

\bibitem[Pit81]{AP}
Jon~T. Pitts.
\newblock {\em Existence and regularity of minimal surfaces on {R}iemannian
  manifolds}, volume~27 of {\em Mathematical Notes}.
\newblock Princeton University Press, Princeton, N.J.; University of Tokyo
  Press, Tokyo, 1981.

\bibitem[Sha17]{BS}
Ben Sharp.
\newblock Compactness of minimal hypersurfaces with bounded index.
\newblock {\em J. Differential Geom.}, 106(2):317--339, 2017.

\bibitem[Sim96]{LS}
Leon Simon.
\newblock {\em Theorems on regularity and singularity of energy minimizing
  maps}.
\newblock Lectures in Mathematics ETH Z\"{u}rich. Birkh\"{a}user Verlag, Basel,
  1996.
\newblock Based on lecture notes by Norbert Hungerb\"{u}hler.

\bibitem[{Son}18]{AS}
Antoine {Song}.
\newblock {Existence of infinitely many minimal hypersurfaces in closed
  manifolds}.
\newblock {\em arXiv e-prints}, page arXiv:1806.08816, Jun 2018.

\bibitem[SU81]{SU}
J.~Sacks and K.~Uhlenbeck.
\newblock The existence of minimal immersions of {$2$}-spheres.
\newblock {\em Ann. of Math. (2)}, 113(1):1--24, 1981.

\bibitem[Whi17]{BW}
Brian White.
\newblock On the bumpy metrics theorem for minimal submanifolds.
\newblock {\em Amer. J. Math.}, 139(4):1149--1155, 2017.

\end{thebibliography}
\end{document}